\newtheorem{theorem}{Theorem}[section]
\newtheorem{lemma}[theorem]{Lemma}
\newtheorem{proposition}[theorem]{Proposition}
\newtheorem{corollary}[theorem]{Corollary}
\newtheorem{definition}[theorem]{Definition}
\newtheorem{assumptions}[theorem]{Assumptions}
\newtheorem{example}[theorem]{Example}
\newtheorem{remark}[theorem]{Remark}
\begin{document}
	
%\title{Computing the free convolution of measures exploiting the trapezoidal quadrature rule}

\title{Computing Free Convolutions via Contour Integrals}

\author{Alice Cortinovis\footnote{Department of Mathematics, Stanford University, CA, USA. Email: \texttt{alicecor@stanford.edu}} \and Lexing Ying\footnote{Department of Mathematics and ICME, Stanford University, CA, USA. Email: \texttt{lexing@stanford.edu}}}

\date{}
\maketitle

\begin{abstract}
This work proposes algorithms for computing additive and multiplicative free convolutions of two given measures. We consider measures with compact support whose free convolution results in a measure with a density function that exhibits a square-root decay at the boundary (for example, the semicircle distribution or the Marchenko-Pastur distribution). A key ingredient of our method is rewriting the intermediate quantities of the free convolution using the Cauchy integral formula and then discretizing these integrals using the trapezoidal quadrature rule, which converges exponentially fast under suitable analyticity properties of the functions to be integrated. 
\end{abstract}	

%=======================================================================================	
\section{Introduction}

% MOTIVATION
 % \paragraph{Motivation.}
Given two independent commutative random variables $X$ and $Y$, the density of the sum can be obtained by computing the convolution of the densities of $X$ and $Y$. This no longer holds when $X$ and $Y$ do not commute. As an example, consider two symmetric matrices $X$ and $Y$ of size $n \times n$ whose eigenvalues are $\pm 1$, each chosen with probability $\frac{1}{2}$, and whose eigenvector matrices are two independently chosen random orthogonal matrix (that is, from the Haar distribution). When the size $n$ grows to infinity, the eigenvalues of $X + Y$ tend to follow an arcsine distribution -- a continuous distribution that does not correspond to the convolution of the random variables that define the eigenvalues of $X$ and $Y$. The theory of free probability, introduced by Voiculescu in the 1980s, is a powerful tool to explain the behavior of the eigenvalue distribution of the sum of random matrices via the concept of \emph{free additive convolution}; we refer the reader to the book~\cite{Mingo2017} for an explanation of the topic and its connections with random matrix theory.
 
 Another application of free probability is the following. When $Z$ is a random matrix of size $n \times p$ whose rows are zero-mean random vectors with variance $\Sigma_p \in \mathbb{R}^{p \times p}$, one can consider the sample covariance $\widehat \Sigma_p:= \frac{1}{n} Z^T Z$. If we consider increasing values of $n$ and $p$ with $\frac{p}{n} \to \gamma$, and the eigenvalues of $\Sigma_p$ have a limit distribution, then the eigenvalue distribution of $\widehat \Sigma_p$ also has a deterministic limit, under mild conditions~\cite{Silverstein1995,Choi1995}. Again, such a limit can be expressed using the tools of \emph{free multiplicative convolution}.

 % RELATED LITERATURE
 \paragraph{Related literature.} The free additive and multiplicative convolutions of measures are defined, theoretically, via Cauchy transforms, R-transforms, T-transforms, and S-transforms, which will be recalled in Section~\ref{sec:freeconvolution}. It is possible to compute the free convolution analytically only in some very special cases; for all other measures, one must resort to numerical techniques. An approach for the numerical computation of free convolutions is to use fixed-point iterations, for example, based on the results in~\cite[Theorem 6.5]{Helton2018}. The work~\cite{Dobriban2015} proposes an algorithm for computing the free multiplicative convolution of a sum of point measures with a Marchenko-Pastur distribution (which fits into the application of free multiplicative convolution mentioned above). The work~\cite{Rao2008} addresses the free additive convolution of measures whose Cauchy transform is an algebraic function. The work~\cite{Olver2012} proposes an algorithm for the free convolution of measures that result in a smoothly-decaying measure supported on the entire real line or a measure that is supported on a compact interval and exhibits a square-root behavior at the boundary; see Remark~\ref{rmk:compare} for a comparison with our approach.

% OUR CONTRIBUTIONS
\paragraph{Contributions.} In this work, we propose a new algorithm for computing the free (additive and multiplicative) convolution of two measures $\mu_1$ and $\mu_2$. We assume that $\mu_1$ and $\mu_2$ have compact support and that their free convolution has a square-root behavior at the boundary, which is made precise in Definition~\ref{def:sqrtbehavior}. Our approach is based on approximating the analytic transforms needed in the computation of the free convolution using the Cauchy integral theorem along suitably defined curves. The numerical evaluation of integrals is done using the trapezoidal quadrature rule, which results in a fast convergence with respect to the number of quadrature points because the functions to be integrated are analytic.

 % OUTLINE
\paragraph{Outline.} The rest of the paper is organized as follows. In Section~\ref{sec:freeconvolution}, we recall the relevant definitions and results on the free convolution of measures. In Section~\ref{sec:algorithmsum}, we outline the proposed algorithm for free additive convolution. A similar algorithm for the free multiplicative convolution is summarized in Section~\ref{sec:algorithmtimes}. In Section~\ref{sec:erroranalysis}, we discuss the approximation errors in the algorithms for free  convolutions. %\LY{mention that the error bounds for the multiplicative case are the same in that section.} 
We report various numerical examples in Section~\ref{sec:experiments}, and the conclusions are given in Section~\ref{sec:conclusions}.	

%We do not provide a discussion on the error bounds for the multiplicative case because it is analogous to the additive case.

%=======================================================================================	
\section{Preliminaries on free convolution}\label{sec:freeconvolution}

In this work, we consider measures with compact support and with a continuous density without atoms.

\paragraph{Notation.} We denote by $\mu$ a measure with support in $[a, b]$ and density $f(x)$. The input measures are $\mu_1$ (with support $[a_1, b_1]$) and $\mu_2$ (with support $[a_2, b_2]$). We denote by $\mu_1 \boxplus \mu_2$ their free additive convolution and by $\mu_1 \boxtimes \mu_2$ their free multiplicative convolution.
We denote $\mathbb{D}$ the open unit disk in $\mathbb{C}$ and $\partial \mathbb{D}$ the unit circle (the border of the unit disk) in $\mathbb{C}$. When computing integrals on $\partial\mathbb{D}$ or any other curve in the complex plane, these are intended in an anti-clockwise direction. $\mathbb{C}^+$ and $\mathbb{C}^{-}$ denote the half of the complex plane containing numbers with strictly positive or negative imaginary parts, respectively.

\subsection{Measures with sqrt-behavior at the boundary and Jacobi measures}

The most regular distributions we consider in this paper are those with an sqrt-behavior at the boundary in the following sense.

\begin{definition}\label{def:sqrtbehavior}
	The measure $\mu$ has square-root behavior at the boundary (\emph{sqrt-behavior}) if its density has the form
	\begin{equation*}
		\mathrm{d}\mu (x) = f(x) \mathrm{d}x = \psi(x)\sqrt{x-a}\sqrt{b-x} \mathrm{d}x
	\end{equation*}
for some $\psi \in C^1([a,b])$ with $\psi'(x)$ of bounded variation. 
\end{definition}

We recall two examples of measures of this type that arise naturally from random matrix theory.

\begin{example}[Semicircle]\label{ex:S}
  The \emph{semicircle} distribution has support in $[a, b] = [-2, 2]$ and its density is $$\mathrm{d}\mu(x) = \frac{1}{2\pi}\sqrt{4 - x^2} \mathrm{d}x.$$ It has sqrt-behavior at the boundary with $\psi(x) = \frac{1}{2\pi}$. 
  
  The semicircle distribution is the limit (for $n \to \infty$) eigenvalue distribution of a random symmetric matrix of size $n \times n$ that has random i.i.d. entries with mean $0$ and variance $\frac{1}{n}$ above the diagonal, and i.i.d. entries with mean $0$ and variance $\frac{C}{n}$ on the diagonal, for some constant $C \ge 0$. 	
 	\end{example}
 	
 	\begin{example}[Marchenko-Pastur]\label{ex:MP}
 	 The \emph{Marchenko-Pastur} distribution with parameter $\lambda \in (0,1)$ has support in $[\lambda_-, \lambda_+]$ with $\lambda_{\pm} := (1 \pm \sqrt\lambda)^2$, and its density is $$\mathrm{d}\mu(x) = \frac{\sqrt{(\lambda_+ - x)(x - \lambda_-)}}{2\pi\lambda x} \mathrm{d}x.$$
 	It has sqrt-behavior at the boundary, with $\psi(x) = \frac{1}{2\pi \lambda x}$. 
  
  The Marchenko-Pastur distribution is the limit (for $n \to \infty$) eigenvalue distribution of random matrices of the form $B_nB_n^T$ for matrices $B_n \in \mathbb{R}^{\lfloor \lambda n \rfloor \times n}$ made of i.i.d. entries with zero mean and variance $\frac{1}{n}$.
 	\end{example}

  A more general class of measures is the following.

  \begin{definition}
      $\mu$ is a Jacobi measure if its density has the form
      \begin{equation*}
          \mathrm{d}\mu(x) = (x-a)^\alpha (b - x)^\beta \psi(x) \mathrm{d}x 
      \end{equation*}
      for some $\alpha,\beta >-1$ and $\psi \in C^1([a, b])$.
  \end{definition}

  \begin{example}[Uniform distribution]
 	As an example of a Jacobi measure that does \emph{not} have an sqrt-behavior at the boundary (that we will use in our numerical experiments), let us consider the \emph{uniform} distribution with support in the interval $[a, b]$, for some $a, b\in \mathbb{R}$. The density is $\mathrm{d}\mu(x) = \frac{1}{b-a}\mathrm{d}x$. 
 	\end{example}

\subsection{The Cauchy transform and Stiltjies inversion}

\begin{definition}
	The Cauchy transform of the point $z \in \mathbb{C} \backslash [a, b]$ is defined as
	\begin{equation*}%\label{eq:defcauchy}
		G(z) = \int_{a}^{b} \frac{1}{z-x} \mathrm{d}\mu(x).
	\end{equation*}
\end{definition}
When ambiguous, we will put a subscript indicating the measure, for example, $G_{\mu}(z)$.

\begin{theorem}\label{thm:Ganalytic}
	The Cauchy transform $G$ is analytic on $\mathbb{C} \backslash [a,b]$ and at infinity.
\end{theorem}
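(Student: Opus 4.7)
The plan is to verify analyticity pointwise on $\mathbb{C} \setminus [a,b]$ by exhibiting a convergent power series expansion around each point, and then to handle the point at infinity separately by expanding $G$ in powers of $1/z$.

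For the first part, I would fix an arbitrary $z_0 \in \mathbb{C} \setminus [a,b]$ and set $\delta := \mathrm{dist}(z_0, [a,b]) > 0$, which is strictly positive since $[a,b]$ is compact. For $z$ with $|z-z_0| < \delta$ and $x \in [a,b]$, the geometric series identity
$$\frac{1}{z-x} = \frac{1}{(z_0-x) + (z-z_0)} = \sum_{k=0}^{\infty} \frac{(-1)^k (z-z_0)^k}{(z_0-x)^{k+1}}$$
converges uniformly in $x$, because $|z-z_0|/|z_0-x| \le |z-z_0|/\delta < 1$. Since $\mu$ is a finite measure, uniform convergence allows termwise integration, yielding
$$G(z) = \sum_{k=0}^{\infty} c_k (z-z_0)^k, \qquad c_k := (-1)^k \int_a^b \frac{\mathrm{d}\mu(x)}{(z_0-x)^{k+1}},$$
which is a power series in $(z-z_0)$ valid on a neighborhood of $z_0$. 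This exhibits $G$ as analytic at every $z_0 \in \mathbb{C} \setminus [a,b]$.

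For analyticity at infinity, recall that this means $w \mapsto G(1/w)$ extends to be analytic at $w=0$. Setting $R := \max(|a|,|b|)$, for $|z| > R$ and $x \in [a,b]$ the bound $|x/z| \le R/|z| < 1$ gives the uniformly (in $x$) convergent expansion
$$\frac{1}{z-x} = \frac{1}{z}\sum_{k=0}^{\infty} \left(\frac{x}{z}\right)^k = \sum_{k=0}^{\infty} \frac{x^k}{z^{k+1}}.$$
Termwise integration produces the moment expansion
$$G(z) = \sum_{k=0}^{\infty} \frac{m_k}{z^{k+1}}, \qquad m_k := \int_a^b x^k \, \mathrm{d}\mu(x),$$
with the coefficients uniformly bounded by $m_k \le R^k \mu([a,b])$, so $G(1/w) = \sum_{k \ge 0} m_k w^{k+1}$ defines an analytic function on a neighborhood of $w=0$ that vanishes at the origin.

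There is no genuine obstacle; the only thing to be careful about is justifying the interchange of sum and integral, which in both parts follows from the uniform convergence of the respective geometric series on $[a,b]$ together with finiteness of $\mu$. The same arguments could equivalently be phrased via Morera's theorem applied to $G$ on any closed triangle in $\mathbb{C}\setminus[a,b]$, using Fubini to swap the triangle integral with the measure integral, but the explicit power series proof has the advantage of directly producing the Taylor and Laurent coefficients that are used elsewhere in the paper.
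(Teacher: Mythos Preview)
Your proof is correct and follows essentially the same approach as the paper: a geometric-series expansion of $\frac{1}{z-x}$ around an arbitrary point $z_0\in\mathbb{C}\setminus[a,b]$, integrated termwise via uniform convergence (the paper invokes Fubini--Tonelli), followed by the analogous moment expansion in $1/z$ for the point at infinity. The only cosmetic differences are your choice of radius $\delta$ versus the paper's $\tfrac{1}{2}\delta$, and your $|z|>R$ versus the paper's $|z|<\tfrac{1}{2}\min\{1/|a|,1/|b|\}$ after substituting $z\mapsto 1/z$; neither affects the argument.
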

This is a version of Proposition A.7 in~\cite{Olver2012} with different assumptions, and it is slightly better than Lemma 2 in Chapter 3 in~\cite{Mingo2017} because we have stronger assumptions in our case (i.e. compact support). The proof follows~\cite[Chapter 3, Lemma 2]{Mingo2017}; we report the proof in Appendix~\ref{sec:proofs} for completeness.

To see the Cauchy transform as a map from the unit disk in $\mathbb{C}$ to $\mathbb{C}$, we use the Joukowski map.

\begin{definition}
	The Joukowski map relative to the segment $[a, b]$ is
	\begin{equation*}
		J_{[a,b]}(v) = \frac{1}{2} \left ( v + \frac{1}{v} \right ) \frac{b-a}{2} + \frac{b+a}{2}, \qquad v \in \mathbb{C}.
	\end{equation*}
\end{definition}
This is a conformal map from the unit disk to $\mathbb{C} \cup \{\infty\}\backslash [a, b]$. The unit circle is mapped into the segment $[a, b]$ (``twice''), and circles are mapped into ellipses. For $z \in \mathbb{C} \cup \{\infty\}\backslash [a, b]$ we denote by $J_{[a,b]}^{-}$ and $J_{[a,b]}^{+}$ the inverse Joukowski functions which are inside and outside the unit disk, respectively. The Joukowski map allows us to define
\begin{equation*}
	\mathcal G(v) := G\left (J_{[a,b]}(v)\right ),
\end{equation*}
%\LY{I believe tilde is used to denote numerical approximation later. So maybe we should not use a tilde in the above notation. Maybe $\mathsf{G}$}
which is a holomorphic function that maps the unit disk into $\mathbb{C}$, and such that $\mathcal G(0) = 0$. Therefore, we can write it as a power series
\begin{equation}\label{eq:laurent}
	\mathcal G(v) = \sum_{n \ge 1} g_n v^n
\end{equation}
that converges everywhere inside $\mathbb{D}$.

\begin{remark}\label{rmk:convergenceGtilde}
    If the measure $\mu$ has sqrt-behavior at the boundary, the series~\eqref{eq:laurent} converges also \emph{on} the unit circle. Indeed, as discussed in~\cite[Section 3.0.1]{Olver2012}, the coefficients are $g_n = \frac{(b-a)\pi}{2} \psi_{n-1}$, where $\psi_{n}$ is the $n$-th coefficient in the series expansion of $\psi(x)$ with Chebyshev polynomials of the second kind. When $\psi(x)$ is analytic in a (complex) neighborhood of the interval $[a,b]$, the coefficients $\psi_n$ decay exponentially fast; see, e.g.,~\cite[Theorem 8.1]{Trefethen2019}. To obtain convergence of~\eqref{eq:laurent} on the unit circle, it is sufficient to assume that $\psi(x)$ is Lipschitz-continuous, see, e.g.,~\cite[Theorem 3.1]{Trefethen2019}.
\end{remark}

\begin{example}
For the semicircle and Marchenko-Pastur distribution, we can write the Cauchy transform explicitly (and their inverse and $\mathcal G(z)$ as well). For the semicircle distribution, we have
\begin{equation*}
    G(z) = \frac{z - \sqrt{z^2-4}}{2}, \qquad G^{-1}(w) = w + \frac{1}{w},\qquad \mathcal G(v) =v.
\end{equation*}
For the Marchenko-Pastur distribution with parameter $\lambda$, we have
 	\begin{equation*}
 		G(z) = \frac{z+\lambda-1-\sqrt{(z-\lambda-1)^2-4\lambda}}{2\lambda z}, \qquad G^{-1}(w) = \frac{1}{1-\lambda w} + \frac{1}{w}, \qquad \mathcal G(v) = \frac{v}{\lambda(1-v\sqrt{\lambda})}.
 	\end{equation*}
 	In particular, note that $\mathcal G$ is analytically continuable in the disk of radius $\frac{1}{\sqrt\lambda} > 1$.
  For the uniform distribution on the interval $[-m,m]$, instead, we have that
 	\begin{equation*}
 		G(z) = \frac{1}{2m} \log \frac{z+m}{z-m}, \qquad G^{-1}(w) = -m + \frac{2m}{1-\exp(-2mw)}.
 	\end{equation*}
 	We have
 	\begin{equation*}
 		\mathcal G(v) = \frac{1}{m}\log \left ( 1 + \frac{2}{1-v}\right ) = \frac{2}{m} \sum_{n \ge 0} \frac{v^{2n+1}}{2n+1},
 	\end{equation*}
 	which means that $\mathcal G$ is not analytically continuable on the border of the unit disk.
\end{example}

The Stiltjies inversion formula allows us to recover the density of a distribution from its Cauchy transform.

\begin{theorem}[{\cite[Theorem 6]{Mingo2017}}]\label{thm:stiltjies}
	Assume $\mathrm{d}\mu(x) = f(x)\mathrm{d}x$ for a continuous function $f$. For any $c < d \in (a, b)$ we have that
	\begin{equation*}
		-\frac{1}{\pi} \lim_{y \to 0^+} \int_c^d \mathrm{Im}\left ( G(x + iy) \right ) \mathrm{d}x = \mu([c, d]).
	\end{equation*}
\end{theorem}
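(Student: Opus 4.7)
The plan is to reduce the statement to a convergence of Poisson-kernel averages, which is classical once the imaginary part of $G$ is computed explicitly.

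First, I would write $G(x+iy) = \int_a^b \frac{(x-t) - iy}{(x-t)^2 + y^2} f(t) \mathrm{d}t$, so that
\begin{equation*}
    -\frac{1}{\pi} \mathrm{Im}\, G(x+iy) = \frac{1}{\pi}\int_a^b \frac{y}{(x-t)^2 + y^2} f(t)\,\mathrm{d}t = (P_y * f)(x),
\end{equation*}
where $P_y$ is the Poisson kernel on the upper half-plane (with $f$ extended by $0$ outside $[a,b]$). Integrating over $x \in [c,d]$ and swapping the order of integration (Fubini applies because for fixed $y>0$ the integrand is bounded and both intervals are compact), I obtain
\begin{equation*}
    -\frac{1}{\pi} \int_c^d \mathrm{Im}\, G(x+iy)\, \mathrm{d}x = \int_a^b f(t) \, K_y(t)\, \mathrm{d}t, \qquad K_y(t) := \frac{1}{\pi}\int_c^d \frac{y}{(x-t)^2+y^2}\, \mathrm{d}x.
\end{equation*}

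Next, I would evaluate $K_y(t)$ in closed form as
\begin{equation*}
    K_y(t) = \frac{1}{\pi}\left[\arctan\!\left(\frac{d-t}{y}\right) - \arctan\!\left(\frac{c-t}{y}\right)\right],
\end{equation*}
and observe that $0 \le K_y(t) \le 1$ uniformly in $y$ and $t$. As $y \to 0^+$, the pointwise limit of $K_y(t)$ is $1$ for $t \in (c,d)$, $1/2$ at the endpoints $t=c,d$, and $0$ for $t \notin [c,d]$.

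The last step is to push the limit inside: since $f$ is continuous on the compact set $[a,b]$ hence bounded, the integrand $f(t)K_y(t)$ is dominated by $\|f\|_\infty \cdot \mathbf{1}_{[a,b]}(t)$, so the dominated convergence theorem gives
\begin{equation*}
    \lim_{y \to 0^+} \int_a^b f(t)\, K_y(t)\, \mathrm{d}t = \int_c^d f(t)\,\mathrm{d}t = \mu([c,d]),
\end{equation*}
where the contribution of the two endpoints $\{c,d\}$ to the limit integral is zero because $\mu$ has a continuous density and hence no atoms. The main technical point to verify carefully is the uniform bound $K_y \le 1$ enabling dominated convergence; beyond that the argument is a direct computation, and there is no analytical obstacle since the compact-support assumption removes any issue at infinity.
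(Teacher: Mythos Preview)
Your proposal is correct and follows essentially the same route as the paper's own argument (given in Appendix~\ref{sec:proofs} for the slightly more general Theorem~\ref{thm:limit}): compute the imaginary part of $G$, swap the two integrals by Fubini, evaluate the inner integral as a difference of arctangents, and pass to the limit via dominated convergence using that the pointwise limit is the indicator of $(c,d)$ up to a null set. The only cosmetic differences are that you phrase the first step as a Poisson-kernel convolution and use $\|f\|_\infty\,\mathbf{1}_{[a,b]}$ as dominating function, whereas the paper bounds the arctan difference directly by $2\pi$.
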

In our algorithm, we will use the following corollary, whose proof we report in Appendix~\ref{sec:proofs}, for completeness.

\begin{corollary}\label{cor:limit}
	Let $\mathrm{d}\mu(x) = f(x)\mathrm{d}x$ for a continuous function $f$. Then for all $\theta \in (0, \pi)$ we have
	\begin{equation*}
		f\left ( \frac{b-a}{2}\cos\theta + \frac{b+a}{2}\right ) = \frac{1}{\pi} \lim_{r \to 1^-} \mathrm{Im} \left ( \mathcal G(r e^{i\theta})\right ).
	\end{equation*}
\end{corollary}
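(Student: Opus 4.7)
The plan is to reduce the statement to the Stieltjes inversion formula (Theorem~\ref{thm:stiltjies}) via the explicit Joukowski change of variables. First, I would write $v = re^{i\theta}$ and compute
\begin{equation*}
J_{[a,b]}(re^{i\theta}) = x_r + iy_r,
\end{equation*}
where $x_r := \frac{b-a}{4}\bigl(r+\tfrac{1}{r}\bigr)\cos\theta + \frac{b+a}{2}$ and $y_r := \frac{b-a}{4}\bigl(r-\tfrac{1}{r}\bigr)\sin\theta$. For $\theta \in (0,\pi)$ and $r \in (0,1)$ this gives $y_r < 0$, and as $r\to 1^-$ we have $x_r \to x_* := \frac{b-a}{2}\cos\theta + \frac{b+a}{2} \in (a,b)$ while $y_r \to 0^-$.

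Next, since $\mu$ is a real measure, the Cauchy transform satisfies $G(\bar z) = \overline{G(z)}$, and therefore
\begin{equation*}
\mathrm{Im}\bigl(\mathcal G(re^{i\theta})\bigr) = \mathrm{Im}\bigl(G(x_r+iy_r)\bigr) = -\mathrm{Im}\bigl(G(x_r + i|y_r|)\bigr).
\end{equation*}
It therefore suffices to show that $-\tfrac{1}{\pi}\mathrm{Im}\bigl(G(x_r + i|y_r|)\bigr) \to f(x_*)$ as $r \to 1^-$.

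For this I would expand the Cauchy transform into its real and imaginary parts to obtain the Poisson-type representation
\begin{equation*}
-\frac{1}{\pi}\mathrm{Im}\bigl(G(x+iy)\bigr) = \int_a^b \frac{1}{\pi}\cdot\frac{y}{(x-t)^2+y^2}\,f(t)\,dt, \qquad y > 0,
\end{equation*}
and recognize the kernel as the Poisson kernel of the upper half-plane applied to $f$ extended by zero off $[a,b]$. Since $x_* \in (a,b)$ is an interior point, $f$ is continuous at $x_*$ and bounded on $[a,b]$, and the standard approximate-identity argument yields convergence of the Poisson integral to $f(x_*)$ as $(x,y)\to(x_*,0^+)$ within the upper half-plane; in particular along the path $(x_r,|y_r|)$.

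The main obstacle is this last step, since Theorem~\ref{thm:stiltjies} provides only the averaged form of Stieltjes inversion, whereas here I need pointwise convergence and moreover along a path where both the real and the imaginary parts vary simultaneously. I expect this upgrade to be routine once the Poisson-kernel identity above is in hand (one splits the integral into a neighborhood of $x_*$, on which the continuity of $f$ gives the value $f(x_*)$ in the limit, and its complement, on which the kernel tends uniformly to $0$), but it is the substantive analytic content of the proof, and I would carry it out directly rather than extract it from the integral statement of Theorem~\ref{thm:stiltjies}.
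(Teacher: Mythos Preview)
Your proposal is correct and takes a different route from the paper.

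The paper's argument is indirect: in the appendix it proves a generalization of Theorem~\ref{thm:stiltjies} (labeled Theorem~\ref{thm:limit}) in which the vertical limit $y\to 0^+$ is replaced by an unrestricted limit $z\to 0$ from $\mathbb{C}^+$, but still in \emph{integrated} form,
\[
-\frac{1}{\pi}\lim_{\substack{z\to 0\\ z\in\mathbb{C}^+}}\int_c^d \mathrm{Im}\bigl(G(t+z)\bigr)\,dt \;=\; \mu([c,d]) \qquad (c<d\text{ in }(a,b)),
\]
via Fubini and dominated convergence applied to the explicit $\arctan$ primitive. It then asserts that the Corollary ``follows directly'' from this, without writing out the passage from the integrated statement to the pointwise one.

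Your approach sidesteps that passage entirely: you recognize $-\tfrac{1}{\pi}\mathrm{Im}\,G(x+iy)$ as the half-plane Poisson integral of $f$ (extended by zero off $[a,b]$) and invoke the approximate-identity property of the Poisson kernel to get pointwise convergence along any path $(x,y)\to(x_*,0^+)$ at an interior continuity point. This delivers exactly what the Corollary requires in one step and handles the oblique approach that you correctly flagged as the real issue. The paper's route stays closer to the textbook Stieltjes-inversion formulation and reuses the same dominated-convergence machinery, but at the cost of leaving the integrated-to-pointwise step implicit; your instinct to argue the pointwise limit directly rather than extract it from Theorem~\ref{thm:stiltjies} is well placed.
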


\subsection{The free additive convolution}

The key function needed to define the free additive convolution of measures is the R-transform, which is defined as follows.

\begin{definition}
	The R-transform of a measure $\mu$ is a function $R$ defined in a neighborhood of $0$ such that $G \left ( R(w) + \frac{1}{w} \right)=w $. 
\end{definition}
When the Cauchy transform $G$ is invertible, we have $R(w) = G^{-1}(w) - \frac{1}{w}$. When we need to avoid confusion, we will denote the R-transform of the measure $\mu$ by $R_{\mu}(w)$. 

\begin{theorem}\label{thm:Ranalytic}
	Let $\mu$ be a measure with compact support in $[a, b]$. Then its R-transform is analytic on $G(\mathbb{R}\backslash[a,b])$ and on a disk centered in the origin with radius $\frac{1}{6\min\{|a|, |b|\}}$. 
\end{theorem}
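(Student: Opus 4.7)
The theorem splits into two claims, which I would establish by complementary tools: the holomorphic inverse function theorem for analyticity on $G(\mathbb{R}\setminus[a,b])$, and Rouché's theorem for analyticity on the disk around the origin.

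For the first part, fix $z_0 \in \mathbb{R}\setminus[a,b]$. The derivative $G'(z_0) = -\int_a^b (z_0 - x)^{-2}\,\mathrm{d}\mu(x)$ is a strictly negative real number, while $w_0 := G(z_0)$ is a nonzero real (its sign matches that of $z_0 - x$, which is constant on $[a,b]$). Combining the nonvanishing of $G'(z_0)$ with the analyticity of $G$ near $z_0$ from Theorem~\ref{thm:Ganalytic}, the holomorphic inverse function theorem yields a local analytic branch $G^{-1}$ on a neighborhood of $w_0$. Since $w_0 \neq 0$, $R(w) = G^{-1}(w) - 1/w$ is analytic there, and the union over all such $z_0$ gives the first claim.

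For the disk around the origin, I would bring infinity to the origin via $u = 1/z$ and invert locally. Let $M$ denote the radius of the support, so that $|m_k| := |\int x^k\,\mathrm{d}\mu(x)| \leq M^k$. Writing $\tilde G(u) := G(1/u)$, the expansion $G(z) = \sum_{n \geq 0} m_n z^{-n-1}$ becomes $\tilde G(u) = u + \sum_{k \geq 1} m_k u^{k+1}$, analytic on $|u| < 1/M$, with $\tilde G(0) = 0$, $\tilde G'(0) = 1$, and the uniform tail estimate $|\tilde G(u) - u| \leq Mr^2/(1-Mr)$ on $|u| \leq r < 1/M$. For fixed $|w| < 1/(6M)$, applying Rouché's theorem to $\tilde G(u) - w$ and $u - w$ on the circle $|u| = 2|w|$ gives $|u - w| \geq |w|$ on one hand, and
\begin{equation*}
|\tilde G(u) - u| \leq \frac{4M|w|^2}{1 - 2M|w|} < 6M|w|^2 < |w|
\end{equation*}
on the other (the last inequality uses $6M|w| < 1$). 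Hence $\tilde G(u) = w$ has a unique root in $|u| < 2|w|$, defining an analytic branch $\tilde G^{-1}\colon D(0, 1/(6M)) \to \mathbb{C}$ with $\tilde G^{-1}(w) = w + O(w^2)$.

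Since $G^{-1}(w) = 1/\tilde G^{-1}(w)$, the identity
\begin{equation*}
R(w) = \frac{1}{\tilde G^{-1}(w)} - \frac{1}{w} = \frac{w - \tilde G^{-1}(w)}{w\,\tilde G^{-1}(w)}
\end{equation*}
exhibits $R$ as analytic at $0$ (the $O(w^2)$ numerator cancels the double zero of the denominator), while injectivity of $\tilde G$ keeps $\tilde G^{-1}(w) \neq 0$ on the punctured disk, giving analyticity on all of $D(0, 1/(6M))$. The main technical point is the Rouché estimate: the constant $6$ is tight for the test radius $2|w|$, emerging from balancing $|u - w| \geq |w|$ on that circle against the geometric-series bound for $|\tilde G(u) - u|$. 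Everything else (the inverse function theorem step, verifying the removable singularity at $w = 0$) is routine once this estimate is in place.
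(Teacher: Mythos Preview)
Your proof is correct. For analyticity on $G(\mathbb{R}\setminus[a,b])$ you and the paper take the same route: verify that $G'(z_0) = -\int_a^b (z_0-x)^{-2}\,\mathrm{d}\mu(x)$ is a nonzero real number for $z_0 \in \mathbb{R}\setminus[a,b]$ and invoke the holomorphic inverse function theorem.

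For the disk claim the approaches diverge. The paper simply cites Theorem~17 of Chapter~3 in Mingo--Speicher, which asserts that if the support lies in $[-r,r]$ then $R$ is analytic on $|w| < 1/(6r)$. You instead supply a self-contained Rouch\'e argument on the auxiliary function $\tilde G(u) = G(1/u)$, which amounts to proving that cited result from scratch. The payoff of your approach is transparency: one sees exactly where the constant $6$ comes from (balancing the geometric-series bound for $|\tilde G(u)-u|$ against $|u-w|\ge|w|$ on the test circle $|u|=2|w|$), and no external reference is needed. The paper's approach is shorter but defers all substance to the citation. Note also that your $M$ is $\max\{|a|,|b|\}$, matching the $r$ in the paper's proof; the $\min$ in the theorem statement appears to be a typo that your argument implicitly corrects.
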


If $G'(z) \neq 0$ for all $z \in \mathbb{C} \cup \{\infty\} \backslash [a,b]$ then $R$ is invertible on $G(\mathbb{C} \cup \{\infty\} \backslash [a,b])$ and it is analytic on the whole $G(\mathbb{C} \cup \{\infty\} \backslash [a,b])$. 
We have the following result for the free additive convolution of $\mu_1$ and $\mu_2$.

\begin{theorem}[{\cite[Chapter 2, Theorem 18]{Mingo2017}}]\label{thm:sum}
	Given two measures $\mu_1$ and $\mu_2$, 
	\begin{equation*}
		R_{\mu_1 \boxplus \mu_2}(w) = R_{\mu_1}(w) + R_{\mu_2}(w).
	\end{equation*}
\end{theorem}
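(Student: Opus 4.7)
The plan is to use Speicher's combinatorial approach via free cumulants, which is the standard modern proof and is the one adopted in the cited reference \cite{Mingo2017}. The idea is to identify the R-transform as the generating function of the free cumulants $\kappa_n(\mu)$, which are defined implicitly through the moment-cumulant formula
$$m_n(\mu) = \sum_{\pi \in \mathrm{NC}(n)} \prod_{V \in \pi} \kappa_{|V|}(\mu),$$
where the sum runs over non-crossing partitions of $\{1, \ldots, n\}$ and $m_n(\mu) = \int_a^b x^n \, \mathrm{d}\mu(x)$. Writing $G_\mu(z) = \sum_{n \ge 0} m_n(\mu)\, z^{-n-1}$ as the Laurent series at infinity and performing Lagrange inversion on the functional equation $G(R(w) + 1/w) = w$, one checks that
$$R_\mu(w) = \sum_{n \ge 1} \kappa_n(\mu)\, w^{n-1},$$
so that the analytic R-transform defined in the paper coincides, as a formal power series, with the free cumulant generating function.

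The next ingredient is the combinatorial characterization of freeness due to Speicher: two bounded self-adjoint elements $X$ and $Y$ in a tracial $*$-probability space are free if and only if all mixed free cumulants $\kappa_n(Z_{i_1}, \ldots, Z_{i_n})$ vanish whenever the $Z_{i_j} \in \{X, Y\}$ are not all equal. This equivalence is proved by verifying that the moment-cumulant formula reproduces exactly the alternating-centered-product vanishing condition that defines freeness in Voiculescu's sense.

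Granting these two facts, the theorem follows from the multilinearity of free cumulants. Expanding
$$\kappa_n(X+Y, \ldots, X+Y) = \sum_{(i_1, \ldots, i_n) \in \{1,2\}^n} \kappa_n(Z_{i_1}, \ldots, Z_{i_n}),$$
all mixed summands vanish by freeness and only the two pure summands survive, yielding $\kappa_n(\mu_1 \boxplus \mu_2) = \kappa_n(\mu_1) + \kappa_n(\mu_2)$. Multiplying by $w^{n-1}$ and summing over $n \ge 1$ gives the identity $R_{\mu_1 \boxplus \mu_2}(w) = R_{\mu_1}(w) + R_{\mu_2}(w)$ as formal power series; since both sides are analytic on a common disk around $0$ by Theorem~\ref{thm:Ranalytic}, the equality extends to an equality of analytic functions on that disk, and then to the common domain of definition by analytic continuation.

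The main obstacle is not the final algebraic step, which is essentially tautological, but setting up the two bridges rigorously: passing between the analytic inverse-Cauchy-transform definition of $R$ used in this paper, its formal power series expansion, and the free cumulant generating function; and showing that the analytic/algebraic definition of freeness is equivalent to the vanishing of mixed free cumulants. Both equivalences are genuine theorems in their own right, and once they are in place, additivity of the R-transform is a formal consequence of the additivity of cumulants for independent (in the free sense) summands.
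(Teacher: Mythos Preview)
The paper does not actually prove this theorem: it is stated with a citation to \cite[Chapter~2, Theorem~18]{Mingo2017} and no proof is given, either in the main text or in Appendix~\ref{sec:proofs}. Your outline via free cumulants and the vanishing of mixed cumulants is correct and is precisely the argument carried out in the cited reference, so there is nothing to compare against beyond noting that you have supplied the proof the paper chose to omit.
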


\subsection{The free multiplicative convolution}

For two measures $\mu_1$ and $\mu_2$ with compact support, analogously to the additive case, a suitable transform (S-transform) allows us to compute the free multiplicative convolutions.

\begin{definition}
	The T-transform and S-transform of a measure $\mu$ are
 \begin{equation*}
 T(z) = \int_a^b \frac{t}{z-t} \mathrm{d}\mu(t) \quad \text{ and } \quad  S(w) = \frac{1+w}{w T^{-1}(w)},
 \end{equation*}
 respectively.
\end{definition}

The T-transform maps $\mathbb{C}^{+}$ into $\mathbb{C}^{-}$ and maps $\mathbb{C}^{-}$ into $\mathbb{C}^{+}$, and it is a holomorphic function from $\mathbb{C} \cup \{\infty\} \backslash [a, b]$ to $\mathbb{C}$. Therefore, the map $\mathcal T(v):= T(J(v))$ has a power expansion $\mathcal T(v) = \sum_{n \ge 1} t_n v^n$ that converges inside the unit disk. 
The S-transform is analytic in a neighborhood of $0$. We skip the details because this is similar to the additive case.

\begin{theorem}[{\cite[Chapter 4, Theorem 23]{Mingo2017}}]
	Given two measures $\mu_1$ and $\mu_2$,
	\begin{equation*}
		S_{\mu_1 \boxtimes \mu_2}(w) = S_{\mu_1}(w) \cdot S_{\mu_2}(w).
	\end{equation*}
\end{theorem}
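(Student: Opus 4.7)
The plan is to prove the multiplicativity of the $S$-transform through the combinatorics of free cumulants and non-crossing partitions, following the line of argument used in Nica--Speicher and Mingo--Speicher. The first step is a reformulation: letting $\psi_\mu(z) := T_\mu(1/z) = \sum_{n\geq 1} m_n(\mu)\, z^n$ denote the shifted moment generating function, one has $T_\mu^{-1}(w) = 1/\psi_\mu^{-1}(w)$, so the definition of $S$ gives
\begin{equation*}
    S_\mu(w) \;=\; \frac{1+w}{w}\,\psi_\mu^{-1}(w).
\end{equation*}
Hence $S_{\mu_1\boxtimes\mu_2} = S_{\mu_1}\,S_{\mu_2}$ is equivalent to the algebraic identity
\begin{equation*}
    \psi_{\mu_1\boxtimes\mu_2}^{-1}(w) \;=\; \frac{1+w}{w}\,\psi_{\mu_1}^{-1}(w)\,\psi_{\mu_2}^{-1}(w)
\end{equation*}
between formal power series convergent near $0$, which I would verify coefficient by coefficient.

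The second step is to realize $\mu_1,\mu_2$ as spectral distributions of free self-adjoint elements $X_1,X_2$ in a non-commutative probability space $(\mathcal{A},\tau)$, so that the $n$-th moment of $\mu_1\boxtimes\mu_2$ equals $\tau((X_1X_2)^n)$. Using freeness together with the moment-cumulant relation, the vanishing of mixed free cumulants between $X_1$ and $X_2$ yields the combinatorial identity
\begin{equation*}
    \tau\bigl((X_1X_2)^n\bigr) \;=\; \sum_{\pi \in \mathrm{NC}(n)} \kappa_\pi[X_1,\ldots,X_1]\cdot m_{K(\pi)}[X_2,\ldots,X_2],
\end{equation*}
where $\mathrm{NC}(n)$ is the lattice of non-crossing partitions of $\{1,\ldots,n\}$, $K(\pi)$ is the Kreweras complement, and $\kappa_\pi$, $m_\sigma$ denote the multiplicative extensions of free cumulants and moments to partitions.

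The third step translates this into the functional equation for $\psi^{-1}$. After multiplying by $z^n$ and summing, the Kreweras duality $\pi \leftrightarrow K(\pi)$ (in particular the fact that $|\pi|+|K(\pi)|=n+1$) collapses the right-hand side into a substitution identity involving the series $\psi_{\mu_1}$, $\psi_{\mu_2}$ and the free cumulant generating series of $X_1$; symmetrizing in $X_1,X_2$ and inverting produces precisely the displayed identity for $\psi_{\mu_1\boxtimes\mu_2}^{-1}$. The main obstacle I expect is the second step: although the Kreweras-complement expansion is classical, it relies on the equivalence between Voiculescu's original definition of freeness (vanishing of alternating centered mixed moments) and Speicher's cumulant characterization (vanishing of mixed free cumulants), together with careful combinatorial bookkeeping of non-crossing pairings that interleave $X_1$'s and $X_2$'s in the product $(X_1X_2)^n$. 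Once this machinery is in place, the third step is just formal generating-function manipulation, and the compact support assumption on $\mu_1,\mu_2$ guarantees that the resulting identity of formal series is an identity of analytic functions in a neighborhood of $0$.
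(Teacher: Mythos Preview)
The paper does not prove this theorem at all: it is quoted verbatim as \cite[Chapter~4, Theorem~23]{Mingo2017} and used as a black box, so there is no ``paper's own proof'' to compare against. Your proposal is not competing with anything in the paper; it is essentially a sketch of the proof that appears in the cited reference (Mingo--Speicher, and in more detail Nica--Speicher), namely the combinatorial route through free cumulants, the Kreweras complement identity for $\tau((X_1X_2)^n)$, and the resulting functional equation for the inverse of the moment series. That outline is correct in substance, and the obstacle you flag---that the Kreweras expansion rests on the equivalence between Voiculescu's and Speicher's formulations of freeness and on careful bookkeeping of non-crossing partitions---is exactly where the work lies in the reference. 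For the purposes of this paper nothing more than the citation is needed.
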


%=======================================================================================	
\section{An algorithm for free additive convolution}\label{sec:algorithmsum}

In this section, we propose an efficient algorithm for computing the free additive convolution of two measures $\mu_1$ and $\mu_2$ that satisfy the following set of assumptions. 

\begin{assumptions}\label{ass:sum}
We assume that $\mu_1$ and $\mu_2$ satisfy the following properties.
\begin{itemize}
    \item They have compact support $[a_1, b_1]$ and $[a_2, b_2]$.
    \item One of them, without loss of generality $\mu_1$, has sqrt-behavior at the boundary.
    \item The other one, without loss of generality $\mu_2$, is a Jacobi measure.
    \item The Cauchy transforms $G_{\mu_1}$ and $G_{\mu_2}$ are invertible on their domain of definition.
\end{itemize}
\end{assumptions}

 We recall that the Cauchy integral formula states that, for an analytic function $f$ and a contour $\Gamma$, for any point $z$ inside $\Gamma$ 
 \begin{equation*}
     f(z) = \frac{1}{2\pi i}\int_{\Gamma} \frac{f(w)}{w-z} \mathrm{d}w.
 \end{equation*}
 This is a tool that we will use frequently in our algorithm, to evaluate Cauchy transforms and R-transforms (which are analytic functions).
 
Another central theoretical result for our algorithm is the following theorem that allows us to characterize the support of $\mu_1 \boxplus \mu_2$ and the image of $G_{\mu_1 \boxplus \mu_2}$.
\begin{theorem}[{Theorem 2.2 and Theorem 5.2 in~\cite{Olver2012}}]\label{thm:support}
		Under the Assumptions~\ref{ass:sum}, let
	\begin{equation}\label{eq:inverse}
		g(w) := G_{\mu_1}^{-1}(w) + G_{\mu_2}^{-1}(w) - \frac{1}{w},
	\end{equation}
 which coincides with $G_\mu^{-1}(w)$ in a neighborhood of $0$. We have the following properties. %\LY{is $g(w)=G_\mu^{-1}(w)$? shall we say this? AC: added it}
		\begin{itemize}
			\item $\mu:= \mu_1\boxplus \mu_2$ has sqrt-behavior at the boundary.
			\item The support of $\mu$ is contained in the interval 
			\begin{equation*}
				[a, b] := [g(\xi_a), g(\xi_b)],
			\end{equation*}
			where $\xi_a$ and $\xi_b$ are the unique zeros of the derivative $g'$ in the intervals
   \begin{equation*}
   (\max(G_{\mu_1}(a_1), G_{\mu_2}(a_2)), 0)\quad \text{ and } \quad (0, \min(G_{\mu_1}(b_1), G_{\mu_2}(b_2))),
   \end{equation*}
   respectively.
			\item To check whether a point $w$ is in the image of $G_{\mu_1 \boxplus \mu_2}$ we can use the following criterion:
			\begin{equation*}
				w \in G_{\mu_1 \boxplus \mu_2}(\mathbb{C} \backslash [a,b]) \Leftrightarrow
				\mathrm{sgn}(\mathrm{Im}(g(w))) = -\mathrm{sgn}(\mathrm{Im}(w)).
			\end{equation*}
		\end{itemize}
	\end{theorem}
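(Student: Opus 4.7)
The plan is to identify $g$ with the inverse Cauchy transform of $\mu := \mu_1 \boxplus \mu_2$ near $w = 0$, then analytically continue this identity and locate its critical points as the branch points of $G_\mu$ (hence the endpoints of the support of $\mu$). By definition of the R-transform, $G_{\mu_j}^{-1}(w) = R_{\mu_j}(w) + 1/w$ wherever the R-transform is defined, which by Theorem~\ref{thm:Ranalytic} includes a neighborhood of $0$. Combining with Theorem~\ref{thm:sum},
\begin{equation*}
    g(w) = R_{\mu_1}(w) + R_{\mu_2}(w) + \frac{1}{w} = R_\mu(w) + \frac{1}{w} = G_\mu^{-1}(w)
\end{equation*}
in a punctured neighborhood of $0$.

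Next, I would locate the critical point $\xi_b$. For real $z > b_j$, $G_{\mu_j}'(z) = -\int (z-x)^{-2}\,d\mu_j(x) < 0$, so $G_{\mu_j}$ bijects $(b_j,+\infty)$ onto $(0,G_{\mu_j}(b_j))$ and $g$ is real-analytic on $I_b := (0,\min(G_{\mu_1}(b_1),G_{\mu_2}(b_2)))$. The expansion $g(w) = 1/w + O(1)$ near $0^+$ forces $g'(w)\to -\infty$. For the behavior at the right endpoint I would use the Cauchy-Schwarz inequality $G_{\mu_j}(z)^2 \le -G_{\mu_j}'(z)$ (coming from $\int d\mu_j = 1$) to conclude $R_{\mu_j}'(w) = (G_{\mu_j}^{-1})'(w) + 1/w^2 > 0$ strictly on $(0,G_{\mu_j}(b_j))$, and combine this with the sqrt-behavior of $\mu_1$ (which gives $(G_{\mu_1}^{-1})'(G_{\mu_1}(b_1)) = 0$) to show that $g'$ attains a strictly positive limit at the right endpoint of $I_b$. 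The intermediate value theorem then produces $\xi_b \in I_b$ with $g'(\xi_b) = 0$. For uniqueness I would establish a definite sign of $g''$ on $I_b$ via a finer analysis of $R_{\mu_j}''$, forcing $g'$ to be monotone. The symmetric argument on $I_a := (\max(G_{\mu_1}(a_1),G_{\mu_2}(a_2)),0)$ produces $\xi_a$.

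For the support and sqrt-behavior, since $g = G_\mu^{-1}$ near $0$ with $g'(\xi_b) = 0$ and $g''(\xi_b) \neq 0$, the expansion $g(w)-g(\xi_b) = \tfrac{1}{2}g''(\xi_b)(w-\xi_b)^2 + O((w-\xi_b)^3)$ inverts on the physical branch to $G_\mu(z) - \xi_b = C\sqrt{z-g(\xi_b)}(1+o(1))$ near $z = g(\xi_b)$. Corollary~\ref{cor:limit} translates this square-root branch-point behavior into sqrt-decay of the density of $\mu$ at $g(\xi_b)$; the analogous computation at $\xi_a$ handles the left endpoint. Analyticity of $g$ (as the inverse of $G_\mu$) outside the two branch points then gives $\mathrm{supp}(\mu) \subseteq [g(\xi_a),g(\xi_b)]$, yielding both the first and second bullets.

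Finally, $G_\mu$ sends $\mathbb{C}^+$ to $\mathbb{C}^-$ and vice versa (a standard Stieltjes property), so its local inverse $g$ inherits the same sign-flipping, giving necessity of the sign condition. For sufficiency I would analytically continue the identity $g = G_\mu^{-1}$ along a path from $0$ to $w$: the sign condition ensures the path does not cross the image of $[a,b]$ under $g$, yielding $w = G_\mu(g(w))$ and placing $w$ in the image of $G_\mu$. The principal technical obstacle I anticipate is the uniqueness of $\xi_a,\xi_b$: while existence follows cleanly from the Cauchy-Schwarz bound and the IVT above, uniqueness requires a convexity-style analysis of $g''$ that must uniformly accommodate the sqrt measure $\mu_1$ and the general Jacobi measure $\mu_2$.
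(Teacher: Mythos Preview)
Your identification of $g$ with $G_\mu^{-1}$ near $0$ and the Cauchy--Schwarz bound giving $R_{\mu_j}'>0$ on $(0,G_{\mu_j}(b_j))$ are correct and are essentially the same ingredients the paper (following Olver~2012) uses for the analogous Claim~\#3 in the proof of Theorem~\ref{thm:supporttimes}. However, the uniqueness of $\xi_a,\xi_b$ via a sign analysis of $g''$ is a genuine gap, and one you are unlikely to close along those lines: there is no reason to expect $g'$ to be monotone on $I_b$ for a general Jacobi $\mu_2$. Your endpoint argument also tacitly assumes the right endpoint of $I_b$ is $G_{\mu_1}(b_1)$; when instead $G_{\mu_2}(b_2)<G_{\mu_1}(b_1)$, you need $g'(G_{\mu_2}(b_2))=R_{\mu_1}'(w)+(G_{\mu_2}^{-1})'(w)>0$, and for Jacobi exponents $\beta>1$ the second term is finite and negative, so positivity does not follow from $R_{\mu_1}'>0$ alone.

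The paper itself does not prove Theorem~\ref{thm:support} (it is cited from Olver~2012), but it proves the multiplicative analogue, Theorem~\ref{thm:supporttimes}, by the same method, which is structurally different from yours and avoids convexity entirely. The argument works in the complex plane rather than on the real interval: one shows, via the maximum principle for the harmonic function $\mathrm{Im}\,g$, that $\mathrm{Im}\,g>0$ on the boundary of $G_{\mu_1}(\mathbb{C}^-)\cap G_{\mu_2}(\mathbb{C}^-)$ while $\mathrm{Im}\,g<0$ near $0$. This forces a single simple curve $\Gamma$ in that domain on which $g$ is real-valued, dividing it into an interior region where $\mathrm{Im}\,g<0$ and an exterior where $\mathrm{Im}\,g>0$. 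The points $\xi_a,\xi_b$ are where $\Gamma$ meets the real axis, and uniqueness comes from the complex Morse lemma (Theorem~\ref{thm:morse}): at any simple zero of $g'$ on the real axis exactly two real-valued curves of $g$ pass through, but the only such curves available are the real axis and $\Gamma$, so there can be at most one zero on each side of $0$. The same curve $\Gamma$ is then identified with the boundary of $G_\mu(\mathbb{C}^-)$, which gives the sign criterion in the third bullet directly, without the path-continuation argument you sketch. The sqrt-behavior at the endpoints is obtained as you suggest, from the quadratic local form of $g$ at $\xi_a,\xi_b$ via the Morse lemma.
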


Our proposed algorithm consists of four main steps:
 \begin{enumerate}
     \item Set up the computation of the R-transform for $\mu_1$ and $\mu_2$.
     \item Compute the support of $\mu$
     \item Compute the Cauchy transform of $\mu$ on a suitable set of points.
     \item Recover the density of $\mu$ from its Cauchy transform.
 \end{enumerate}
	
\subsection{Step 1: Setting up the computation of the R-transform}\label{sec:evalG}

In this section, we explain how to set up the computation of the R-transform for a measure $\mu$ with support in $[a,b]$. We apply this procedure to $\mu_1$ and $\mu_2$. Let us consider the curve $\Gamma := \mathcal G(r_A\partial\mathbb{D})$, for some $r_A \in (0,1)$. Since the R-transform is analytic, given a point $w$ inside $\Gamma$ we can write $R(w)$ with the Cauchy integral formula:
\begin{equation*}%\label{eq:intR}
	R(w) = \frac{1}{2\pi i} \int_{\Gamma} \frac{R(s)}{w-s} \mathrm{d}s.
\end{equation*}
As $\Gamma$ is parameterized by $s = \mathcal G(r_A v)$ for $v$ on the unit circle, we have $\mathrm{d}s = r_A \mathcal G'(r_A v) \mathrm{d}v$ and $G^{-1}(s) = J_{[a,b]}(r_Av)$ and  the integral becomes
\begin{equation}\label{eq:int2}
	R(w) = \frac{r_A}{2\pi i} \int_{\partial \mathbb{D}} \frac{J_{[a,b]}(r_Av) - 1/\mathcal G(r_Av)}{\mathcal G(r_Av)-w} G'(J_{[a,b]}(r_A v)) J_{[a,b]}'(r_Av)\mathrm{d}v =: \int_{\partial \mathbb D} u(v) \mathrm{d}v.
\end{equation}
 We perform an additional change of variables $v = \exp(2\pi i\theta)$, for $\theta \in [0, 1]$, and aim at discretizing~\eqref{eq:int2} with the trapezoidal quadrature rule, which reads
\begin{equation*}%\label{eq:R1}
	R(w) \approx \frac{r_A}{N} \sum_{j=0}^{N-1} \xi_N^{j} \cdot G'(J_{[a, b]}(\xi_N^j)) \cdot J_{[a, b]}'(\xi_N^j) \cdot \frac{J_{[a, b]}(\xi_N^j) - \frac{1}{G(J_{[a, b]}(\xi_N^j))}}{ G(J_{[a, b]}(\xi_N^j))- w}, \qquad \xi_N = \exp\left ( \frac{2\pi i}{N} \right ).
\end{equation*}
 Note that the quantities $G(J_{[a, b]}(\xi_N^j))$ and $G'(J_{[a, b]}(\xi_N^j))$ do not depend on the point $w$ and can be computed ahead of time. For their computation, we again use the trapezoidal quadrature rule. More precisely, for evaluating the Cauchy transform $G(z)$ for a point  $z \in \mathbb{C} \backslash [a,b]$, we rewrite the integral that defines it as
	\begin{equation*}%\label{eq:G1}
		G(z) = \int_{\gamma}\frac{1}{2}\frac{b-a}{2}\left ( 1 - \frac{1}{v^2}\right ) \frac{f(J_{[a,b]}(v))}{z- J_{[a,b]}(v)} \mathrm{d}v,
	\end{equation*}
	where $\gamma$ is the upper unit semicircle in the complex plane, oriented clockwise, where we made the change of variables $x = J_{[a,b]}(v)$. 	
	With an additional change-of-variable $v = \exp (2\pi i \theta)$ for $\theta \in [0,1]$ we get
	\begin{equation}\label{eq:that}
		G(z) = \pi \frac{b-a}{2}\int_1^0 \sin(2\pi\theta) \frac{f\left ( \frac{b-a}{2}\cos (2\pi\theta) + \frac{b+a}{2} \right )}{z - \left ( \frac{b-a}{2}\cos (2\pi\theta) + \frac{b+a}{2} \right )} \mathrm{d}\theta.
	\end{equation}
The discretization in $N$ quadrature points using the trapezoidal quadrature rule reads:
\begin{equation}\label{eq:quadratureG}
	G(z) \approx - \frac{\pi}{N} \sum_{k=0}^{N}{}^{''} \frac{b-a}{2} \sin\left ( \frac{2\pi k}{N}\right ) \frac{f\left ( \frac{b-a}{2}\cos\left ( \frac{2\pi k}{N}\right ) + \frac{b+a}{2} \right )}{z - \left ( \frac{b-a}{2}\cos\left ( \frac{2\pi k}{N}\right ) + \frac{b+a}{2} \right )},
\end{equation}
where the ${}^{''}$ denotes that the first and last terms are halved. This makes sense for any measure $\mu$, but it has a provably exponential convergence when $\mu$ has sqrt-behavior at the boundary (see Corollary~\ref{cor:sqrtexp} below). 
Now note that the derivative of the Cauchy transform can be written as
\begin{equation*}
	G'(z) = -\int_a^b \frac{1}{(z-x)^2} \mathrm{d}\mu(x)
\end{equation*}
and this integral can be discretized in the same way as the Cauchy transform itself.

Practically speaking, the first step of our proposed algorithm consists in choosing a radius $r_A < 1$ (for example, $r_A = 0.95$), a number of quadrature points $N$ (for example, $N = 1000$), and approximating 
\begin{equation*}
	c_j^{(k)} \approx G_{\mu_k}(J_{[a_k, b_k]}(\xi_N^j)), \qquad d_j^{(k)} \approx G_{\mu_k}'(J_{[a_k, b_k]}(\xi_N^j))
\end{equation*}
for $j=0,1,\ldots,N$ and for $k=1,2$ via~\eqref{eq:quadratureG}. We call $\Gamma_1$ and $\Gamma_2$ the images of the circle of radius $r_A$ by $\mathcal G_{\mu_1}$ and $\mathcal G_{\mu_2}$, respectively. This allows us to compute the R-transform for $\mu_1$ and $\mu_2$, when needed, from the expression
\begin{equation}\label{eq:R1}
	R_{\mu_k}(w) \approx \frac{r_A}{N} \sum_{j=0}^{N-1} \xi_N^{j} \cdot d_j^{(k)} \cdot J_{[a_k, b_k]}'(\xi_N^j) \cdot \frac{J_{[a_k, b_k]}(\xi_N^j) - 1/c_j^{(k)}}{ c_j^{(k)}- w}, \quad k \in \{1,2\}.
\end{equation}
%\end{footnotesize}

%\LY{shall we say left, middle, right, instead of (1,1), (1,2), and (1,3) blocks? For all the figures}
\begin{example}\label{ex:explain1}
    Throughout this section, we will illustrate the behavior of our proposed algorithm on an explicit example: we consider $\mu_1$ to be a Marchenko-Pastur distribution with parameter $\lambda = 0.5$ and $\mu_2$ to be the semicircle distribution.
    In Figure~\ref{fig:explain1a}, we show, in the left block, a \textcolor{blue}{blue} circle of radius $r_A = 0.95$, in the middle block the image of this circle via the Joukowski transform associated with $\mu_1$, and in the right block the image of this curve (an ellipse) via the Cauchy transform $G_{\mu_1}$. The \textcolor{red}{red} circle in the left plot is the unit circle. In Figure \ref{fig:explain1b}, we do the same for $\mu_2$. The number of discretization points used in the expression~\eqref{eq:quadratureG} is $N = 400$ in these examples.
\end{example}

\begin{figure}[hbt]
    \centering\includegraphics[width=\textwidth]{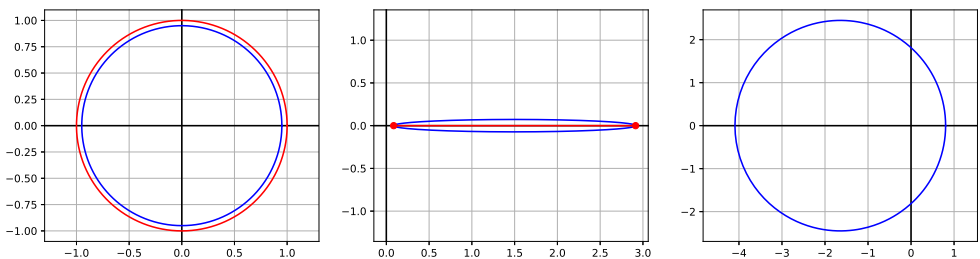}
    \caption{Computation of the Cauchy transform of an ellipse corresponding to a circle of radius $r_A = 0.95$ for the Marchenko-Pastur distribution with parameter $\lambda = 0.5$ (see Example~\ref{ex:explain1}).}
    \label{fig:explain1a}
\end{figure}
\begin{figure}[hbt]
    \centering\includegraphics[width=\textwidth]{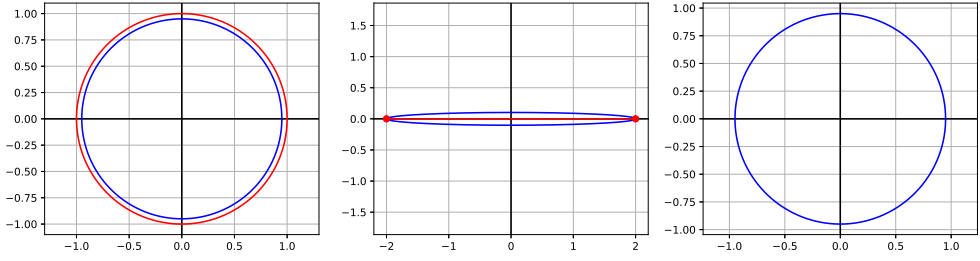}
    \caption{Computation of the Cauchy transform of an ellipse corresponding to a circle of radius $r_A = 0.95$ for the semicircle distribution (see Example~\ref{ex:explain1}).}
    \label{fig:explain1b}
\end{figure}

		\subsection{Step 2: Computing the support of the free sum}\label{sec:support}
	The second point in Theorem~\ref{thm:support} gives a clear characterization of the support of $\mu = \mu_1 \boxplus \mu_2$ we use a result from~\cite{Olver2012}. 
Note that
\begin{equation*}
	g'(w) =  \frac{1}{G_{\mu_1}'(G_{\mu_1}^{-1}(w))}+ \frac{1}{G_{\mu_2}'(G_{\mu_2}^{-1}(w))} + \frac{1}{w^2}
\end{equation*}
so it can be approximated numerically using~\eqref{eq:R1} and~\eqref{eq:quadratureG} when $w$ is inside both $\Gamma_1$ and $\Gamma_2$. In our algorithm, we use bisection to numerically find a zero of $g'(w)$ in the interval $((1-\varepsilon)\max\{c_{N/2}^{(1)}, c_{N/2}^{(2)}\}, 0)$ and one in the interval $(0, (1-\varepsilon)\min\{c_{0}^{(1)}, c_{0}^{(2)}\})$, where the parameter $\varepsilon$ is needed because when $w$ is very close to either $\Gamma_1$ or $\Gamma_2$ the computations become unstable; see Section~\ref{sec:erroranalysis} for a better insight on this problem.
We stop the binary search when the size of the interval is smaller than a fixed tolerance. We denote by $\xi_a$ and $\xi_b$ the zeros that we found numerically. Then, we compute $a \approx g(\xi_a)$ and $b  \approx g(\xi_b)$, and these are (up to numerical errors) the extrema of the support of $\mu = \mu_1 \boxplus \mu_2$.
	
\subsection{Step 3: Computation of the Cauchy transform of $\mu$}\label{sec:integralG}
	
The next step in our algorithm is finding a circle of radius $r_B$ that is entirely contained in $G_{\mu}(\mathbb{C} \cup \{\infty\} \backslash [a,b])$. We want $r_B$ to be as large as possible, for reasons that will be clarified in Section~\ref{sec:erroranalysis}. An upper bound for $r_B$ is the radius of the largest circle that is contained inside the intersection of $\Gamma_1$ and $\Gamma_2$. Given a point $w$ inside this intersection, the criterion from Theorem~\ref{thm:support} allows us to check whether $w$ is in the image of $G_{\mu}$ or not. Therefore, we can perform a binary search on the radius and find a suitable value of $r_B$. 
	
Consider the curve $\Gamma := g(r_B \partial \mathbb{D}) = G_{\mu}^{-1}(r_B \partial \mathbb{D})$. Leveraging the analyticity of $G_{\mu}$, for a point $z$ inside $\Gamma$ we can write $G_{\mu}(z)$ using the Cauchy integral formula
\begin{equation*}%\label{eq:cauchyoutside}
		G_{\mu}(z) = \frac{1}{2\pi i} \int_{\Gamma} \frac{ G_{\mu}(s)}{s - z}\mathrm{d}s = \frac{r_B^2}{2\pi i} \int_{\partial \mathbb{D}} \frac{g'(r_B v) v}{g(r_B v) - z}  \mathrm{d}v.
	\end{equation*}
where the second equality follows from the parametrization of $\Gamma$ with $s =  G_{\mu}^{-1}(r_Bv)$ for  $v \in \partial \mathbb{D}$. The trapezoidal quadrature rule for this integral gives
	\begin{equation}\label{eq:int4}
		G_{\mu}(z) \approx \frac{r_B^2}{N} \sum_{j=0}^{N-1} \frac{g'(r_B \xi_N^j) \xi_N^{2j}}{g(r_B \xi_N^j) - z}.
	\end{equation}
We now choose a circle of radius $r_C$ which is contained inside $J_{[a,b]}^{-}(\Gamma)$, and we evaluate $\mathcal G(v) = G(J_{[a,b]}(v))$ numerically on the points $r_C \xi_M^j$, for $j=0,1,\ldots,M$. Let us denote by $h_0, h_1, \ldots, h_M$ the values obtained by the quadrature formula~\eqref{eq:int4}. We will use these to recover an approximation of the measure $\mu$ in the next section.

\begin{example}\label{ex:explain2}
    In Figure~\ref{fig:explain2}, we show, in the middle block, the numerically computed extrema $a$ and $b$ (the red dots) of the support of $\mu = \mu_1 \boxplus \mu_2$, where $\mu_1$ and $\mu_2$ are the Marchenko-Pastur distribution with parameter $\lambda = 0.5$ and the semicircle distribution from Example~\ref{ex:explain1}, respectively. The circle of radius $r_B$ is drawn in \textcolor{orange}{orange} in the right block. The curve $\Gamma$ is the \textcolor{orange}{orange} curve in the middle block, and the \textcolor{orange}{orange} curve in the left block is $J_{[a,b]}^{-}(\Gamma)$. The circle of radius $r_C$ is in \textcolor{green}{green} in the left block, its image via $J_{[a,b]}$ is the \textcolor{green}{green} ellipse in the middle block and its image via $\mathcal G_{\mu}$ is the \textcolor{green}{green} curve in the right block.
\end{example}

\begin{figure}[hbt]
    \centering\includegraphics[width=\textwidth]{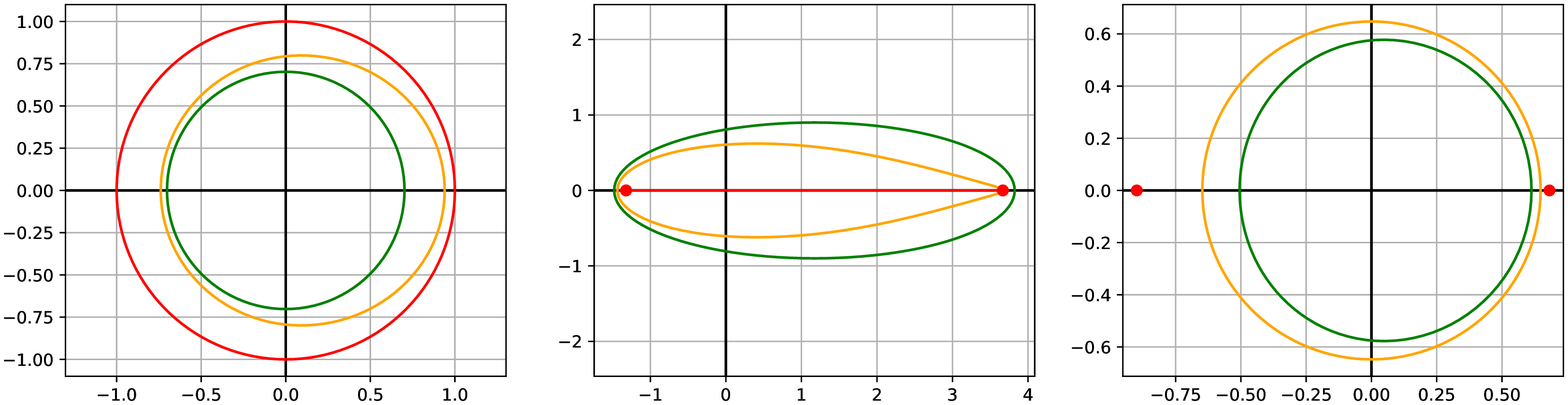}
    \caption{Illustration of Step 3 of the proposed algorithm for free additive convolution, see Example~\ref{ex:explain2}.}
    \label{fig:explain2}
\end{figure}

\subsection{Step 4: Recovering the density of $\mu$ from its Cauchy transform}\label{sec:series}

From the previous step of the algorithm, we have the (approximate) value of $\mathcal G_{\mu}$ in $M$ equispaced points on a circle of radius $r_C < 1$. If we truncate the power series~\eqref{eq:laurent} to the first $M$ terms, we obtain the relation
\begin{equation}\label{eq:IFFT}
	\begin{bmatrix}
		\mathcal G_{\mu}(r_C \xi_M^0)\\ \mathcal G_{\mu}(r_C \xi_M^1) \\ \vdots \\ \mathcal G_{\mu}(r_C \xi_M^{M-1}) 
	\end{bmatrix} = \begin{bmatrix}
		\xi_M^0 & & & \\
		& \xi_M^1 & & \\
		& & \ddots & \\
		& & & \xi_M^{M-1}	
	\end{bmatrix} \cdot F \cdot \begin{bmatrix} g_1 r_C \\ g_2 r_C^2 \\ \vdots \\ g_{M} r_C^{M} \end{bmatrix},
\end{equation}
where $F$ is the Fourier matrix of size $M \times M$. Therefore, we can recover the (approximate) coefficients $g_1, \ldots, g_{M}$ by performing an IFFT to the vector $[h_0, h_1, \ldots, h_{M-1}]^T$ that approximates the left-hand-side and then dividing the $j$th entry by $r_C^j$ for $j=1,\ldots,M$.
Note that, when $r_C^j$ is very small, we cannot hope that the corresponding coefficient of the power series is numerically accurate. Hence, we truncate the series to the first $m$ terms, for some $m < M$. Now using Theorem~\ref{thm:limit} we take the following approximation for the density $\mathrm{d}\mu(x) = f(x)\mathrm{d}x$:
\begin{equation*}
	f\left ( J_{[a,b]}(\exp(i\theta))\right ) \approx -\frac{1}{\pi} \mathrm{Im} \left ( \sum_{j=1}^m g_j \exp(ij\theta)\right ),
\end{equation*}
which can be written nicely in matrix form:
%\begin{scriptsize}
	\begin{equation}\label{eq:recovery}
		\begin{bmatrix}
			f\left (J_{[a,b]}(\xi_m^0) \right )\\
		f\left (J_{[a,b]}(\xi_m^1) \right )\\
			\vdots\\
			f\left (J_{[a,b]}(\xi_m^{m-1}) \right )
		\end{bmatrix} \approx -\frac{1}{\pi}\mathrm{Im} \left (\begin{bmatrix}
			\xi_m^0 & & & \\
			& \xi_m^1 & & \\
			& & \ddots & \\
			& & & \xi_m^{m-1}	
		\end{bmatrix} \cdot F \cdot \begin{bmatrix} g_1\\g_2\\ \vdots \\ g_m \end{bmatrix}  \right ).
	\end{equation}
%\end{scriptsize}
By padding the vector $\begin{bmatrix} g_1 & g_2 & \cdots & g_m\end{bmatrix}^T$ with zeros, we get a discretization of $\mu$ in any number of points. 

We remark that~\eqref{eq:recovery} only makes sense, theoretically, when the power series corresponding to $\mathcal G_{\mu}$ also converges on the unit circle. This is the case when $\mu$ has sqrt-behavior at the boundary, and its density is analytic when non-zero. While Theorem~\ref{thm:support} only states that $\mu = \mu_1 \boxplus \mu_2$ has sqrt-behavior at the boundary, the result from~\cite[Theorem 2.2 and Lemma 2.7]{Bao2020} states that under mild assumptions $\mu$ has the analyticity property as well.

\begin{example}\label{ex:explain3}
    In Figure~\ref{fig:explain3} we plot, in the right block, the approximation of the density of $\mu_1 \boxplus \mu_2$ for the input measures from Examples~\ref{ex:explain1} and~\ref{ex:explain2}, together with the densities of the input measures. In the left block, we compare the obtained approximation of $\mathrm{d}\mu(x)$ with the empirical eigenvalue distribution of a $5000 \times 5000$ matrix resulting from the sum of two random matrices sampled accordingly to Example~\ref{ex:S} and~\ref{ex:MP}.
\end{example}

\begin{figure}[htb]
    \centering\includegraphics[width=.8\textwidth]{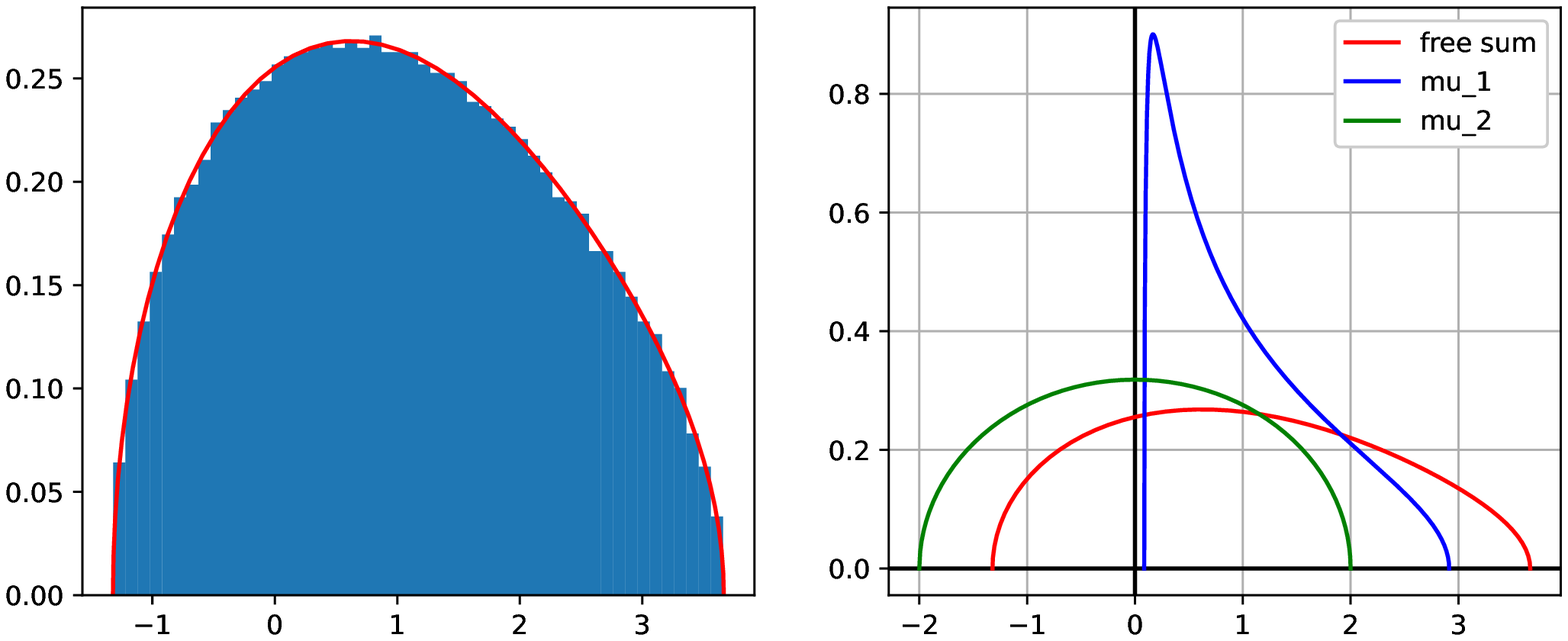}
    \caption{Illustration of Step 4 of the proposed algorithm for free additive convolution, see Example~\ref{ex:explain3}.}
    \label{fig:explain3}
\end{figure}

\begin{algorithm}[htb]
 \begin{small}
		\caption{Computation of the free sum of two measures}
  \label{alg:sum}
		\begin{algorithmic}[1]
			\Require{Measures $\mu_1$ and $\mu_2$, number of quadrature points $N$, number of Fourier coefficients $m$, tolerance $\varepsilon$, radius $r_A$, tolerance \texttt{tol}. Optional: Cauchy transform, inverse Cauchy transform, the derivative of Cauchy transform, for one or both measures.}
			\Ensure{Discretization of the density of the measure $\mu := \mu_1 \boxplus \mu_2$.}
            \State{\% \emph{Step 1: Set up the computation of the R-transform for both input measures}}
			\For{$k=1,2$}
				\If{(Inverse) Cauchy transform of $\mu_k$ is unknown}
					\State{Compute values of $d_j^{(k)}, J_{[a_k, b_k]}'(\xi_N^j), c_j^{(k)}$ appearing in Eq.~\eqref{eq:R1} using~\eqref{eq:quadratureG}.}
				\EndIf
			\EndFor
            \State{\% \emph{Step 2: Find support of $\mu$}}
			\State{Find $\xi_a \in \left (\max\{\mathcal G_{\mu_1}(-r_A+\varepsilon), \mathcal G_{\mu_2}(-r_A+\varepsilon)\}, 0\right )$ such that $g'(\xi_a) = 0$ ($g(z)$ from~\eqref{eq:inverse}).}\label{line:support1}
			\State{Find $\xi_b \in \left (0, \min\{\mathcal G_{\mu_1}(r_A-\varepsilon), \mathcal G_{\mu_2}(r_A-\varepsilon)\}\right )$ such that $g'(\xi_b) = 0$ ($g(z)$ from~\eqref{eq:inverse})}\label{line:support2}
			\State{The support of $\mu$ is $[a,b] := [g(\xi_a), g(\xi_b)]$.}
            \State{\% \emph{Step 3: Compute the Cauchy transform of $\mu$}}
			\State{Find a (large, if possible) circle inside $G_\mu(\mathbb{C} \cup \{\infty\} \backslash [a,b])$ by binary search on the radius, using Theorem~\ref{thm:support} to check whether the circle is inside the wanted region or not. Call $r_B$ its radius and $\Gamma := g(r_B \partial \mathbb{D})$.}
            \State{Let $r_C$ be (slightly) smaller than the radius of the largest circle inscribed in $J^{-}_{[a,b]}(\Gamma)$.}
			\State{Evaluate $\mathcal G_\mu$ on $M = \max\{100, \left \lvert 16 \log_{1/r_C} 10 \right \rvert \}$ equispaced points on the circle of radius $r_C$, using the Cauchy integral theorem + trapezoidal quadrature rule~\eqref{eq:int4}.}\label{line:evalGtilde}
            \State{\% \emph{Step 4: Recover the density of $\mu$}}
			\State{Approximate first $m$ coefficients of power series of $\mathcal G_{\mu}(z)$ via IFFT~\eqref{eq:IFFT}.}
			\State{Return the approximation of the density of $\mu$ by~\eqref{eq:recovery}.}
		\end{algorithmic}
  \end{small}
	\end{algorithm}

 \begin{remark}[Comparison with~\cite{Olver2012}]\label{rmk:compare}
The work~\cite{Olver2012} also proposes an algorithm to compute the free convolution of measures. In the algorithm from~\cite{Olver2012}, in the sqrt-behavior case, they explicitly compute the inverse of the Cauchy transform. However, our approach via the Cauchy integral theorem allows us to compute the R-transform for any measure with compact support, and not only the ones with sqrt-behavior at the boundary. This makes sense to consider, because the measure $\mu_2$ may not have sqrt-behavior at the boundary. In addition, our approach for evaluating $\mathcal G_{\mu}$ on a circle in Step 3 simplifies the approach used in~\cite{Olver2012} to recover the resulting measure from its inverse Cauchy transform.
\end{remark}

 \section{An algorithm for free multiplicative convolution}\label{sec:algorithmtimes}

The case of free \emph{multiplicative} convolution is very similar to the additive case, so we will skip the details. We present an algorithm for two input measures $\mu_1$ and $\mu_2$ that satisfy the following set of assumptions. 

\begin{assumptions}\label{ass:prod}
We assume that $\mu_1$ and $\mu_2$ satisfy the following properties.
\begin{itemize}
    \item They have compact support $[a_1, b_1]$ and $[a_2, b_2]$, with $a_1, b_1, a_2, b_2 > 0$.
    \item Both $\mu_1$ and $\mu_2$ have sqrt-behavior at the boundary.
    \item The T-transforms $T_{\mu_1}$ and $T_{\mu_2}$ are invertible on their domain of definition.
\end{itemize}
\end{assumptions}

Under these assumptions (which are more restrictive than the Assumptions~\ref{ass:sum}), a theorem similar to Theorem~\ref{thm:support} holds. The proof is in Appendix B.

\begin{theorem}\label{thm:supporttimes}
	Under the Assumptions~\ref{ass:prod}, let us define
\begin{equation}\label{eq:inversetimes}
	t(w) := \frac{w}{1+w} T_1^{-1}(w) T_2^{-1}(w),
\end{equation}
which coincides with $T_{\mu}^{-1}(w)$ in a neighborhood of zero. Then the following properties hold.
	\begin{itemize}
		\item $\mu:= \mu_1\boxtimes \mu_2$ has sqrt-behavior at the boundary.
		\item The support of $\mu$ is contained in the interval 
		\begin{equation*}
			[a, b] := [t(\xi_a), t(\xi_b)],
		\end{equation*}
		where $\xi_a$ and $\xi_b$ are the unique zeros of the derivative $t'$ in the intervals $(\max(T_1(a_1), T_2(a_2)), 0)$ and $(0, \min(T_1(b_1), T_2(b_2)))$, respectively.
		\item To check whether a point $w$ is in the image of $T_{\mu_1 \boxtimes \mu_2}$ we can use the following criterion:
		\begin{equation*}
			w \in T_{\mu_1 \boxplus \mu_2}(\mathbb{C} \backslash [a,b]) \Leftrightarrow
			\mathrm{sgn}(\mathrm{Im}(t(w))) = -\mathrm{sgn}(\mathrm{Im}(w)).
		\end{equation*}
	\end{itemize}
\end{theorem}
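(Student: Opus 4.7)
The plan is to mirror the argument used for Theorem~\ref{thm:support} in~\cite{Olver2012}, replacing the Cauchy/R-transform pair by the T/S-transform pair. The starting point is to verify the claimed identity $t(w) = T_\mu^{-1}(w)$ near $w=0$: combining $T^{-1}(w) = \frac{1+w}{w S(w)}$ (which follows directly from the definition of $S$) with the multiplicativity $S_\mu(w) = S_{\mu_1}(w)\,S_{\mu_2}(w)$, a short algebraic manipulation gives
\begin{equation*}
T_\mu^{-1}(w) \;=\; \frac{1+w}{w\,S_{\mu_1}(w)\,S_{\mu_2}(w)} \;=\; \frac{w}{1+w}\,T_{\mu_1}^{-1}(w)\,T_{\mu_2}^{-1}(w) \;=\; t(w).
\end{equation*}
The right-hand side continues analytically to any open set on which both $T_{\mu_1}^{-1}$ and $T_{\mu_2}^{-1}$ are defined, in particular to a complex neighborhood of the two real intervals appearing in the statement.

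For the support characterization I would first study $t$ on the real axis. Since $T_k'(z) = -\int_{a_k}^{b_k}\frac{s}{(z-s)^2}\,\mathrm{d}\mu_k(s) < 0$ whenever $z \notin [a_k,b_k]$ (using $a_k,b_k>0$), each $T_k$ is strictly decreasing on $(b_k,\infty)$ and on $(-\infty,a_k)$, so $T_k^{-1}$ is real-analytic on $(0,T_k(b_k))$ and on $(T_k(a_k),0)$, and hence $t$ is real-analytic on the two intervals in the statement. The sqrt-behavior of $\mu_k$ at the boundary implies that $T_k^{-1}$ extends continuously to the endpoints of these intervals with finite values and with $T_k^{-1}{}'$ finite and nonzero; a direct computation of the sign of $t'$ at the two endpoints shows that the signs are opposite, which yields existence of a zero of $t'$ in each interval. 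Uniqueness follows exactly as in~\cite{Olver2012} by checking that $t$ is convex on $(\max(T_1(a_1),T_2(a_2)),0)$ and concave on $(0,\min(T_1(b_1),T_2(b_2)))$ (or vice versa), using positivity of $T_k^{-1}{}''$ that is inherited from the analogous statement for $G_k^{-1}$.

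Given the critical points $\xi_a,\xi_b$, the inclusion $\mathrm{supp}(\mu) \subset [t(\xi_a),t(\xi_b)]$ and the sign criterion follow from the inverse function viewpoint. The function $t$ is a conformal bijection from the connected component of $\{w : t'(w)\neq 0\}$ containing $0$ onto $\mathbb{C}\setminus[t(\xi_a),t(\xi_b)]$, and its inverse agrees with $T_\mu$ on the neighborhood of $\infty$ where both are defined; by analytic continuation this identification extends to the whole complement. Since $T_\mu$ maps $\mathbb{C}^+\to\mathbb{C}^-$ and vice versa, the point $w$ is in the image of $T_\mu$ on $\mathbb{C}\setminus[a,b]$ if and only if $t(w)$ lies in the half plane opposite to $w$, which is precisely the stated sign criterion. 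The sqrt-behavior of $\mu$ follows because $\xi_a,\xi_b$ are simple zeros of $t'$: a simple critical point of $t$ inverts to a square-root branch point of $T_\mu$ at the corresponding endpoint, and applying the Stieltjes-type inversion formula analogous to Corollary~\ref{cor:limit} for the T-transform converts this branch point into the density profile of Definition~\ref{def:sqrtbehavior}.

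The main obstacle I anticipate is controlling the behavior at the endpoints of the two real intervals and ensuring that the critical points $\xi_a,\xi_b$ are interior and simple. In the additive case only one of the two input measures needed sqrt-behavior; the stronger hypothesis in Assumptions~\ref{ass:prod} that \emph{both} $\mu_1$ and $\mu_2$ have sqrt-behavior enters precisely here, guaranteeing that $T_k^{-1}{}'$ is bounded and nonzero at the relevant endpoints, so that the convexity/concavity argument for uniqueness of the critical point and the simplicity of the branch point both go through.
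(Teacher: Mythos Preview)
Your outline has the right shape but contains two concrete errors and one unjustified leap.

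First, the endpoint behavior is stated incorrectly. If $\mu_k$ has sqrt-behavior then $T_k^{-1}$ extends analytically to $T_k(a_k)$ and $T_k(b_k)$ with \emph{vanishing} first derivative there (this is the content of the paper's Proposition~\ref{prop:expansion}), not ``finite and nonzero'' as you write. Consequently, at the endpoint $\xi_b^1:=T_{\mu_1}(b_1)$ (say $\xi_b^1<\xi_b^2$) the term $(T_{\mu_1}^{-1})'(\xi_b^1)$ drops out of $t'(\xi_b^1)$, and what remains is
\[
t'(\xi_b^1)=\frac{T_{\mu_1}^{-1}(\xi_b^1)}{1+\xi_b^1}\Bigl(\xi_b^1\,(T_{\mu_2}^{-1})'(\xi_b^1)+\frac{T_{\mu_2}^{-1}(\xi_b^1)}{1+\xi_b^1}\Bigr).
\]
Showing this is positive is not a ``direct computation of the sign''; the paper needs Jensen's inequality applied to the convex map $x\mapsto x/(w-x)$ to control the cross term. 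Your sketch does not supply this.

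Second, the uniqueness argument via convexity/concavity does not carry over from the additive case. There $g$ is a \emph{sum} of three pieces and convexity of each summand adds; here $t(w)=\frac{w}{1+w}T_1^{-1}(w)T_2^{-1}(w)$ is a product, and positivity of $(T_k^{-1})''$ says nothing direct about $t''$. The paper avoids this entirely: it first proves, via a harmonic--function/maximum--principle argument on $f(y)=\frac{1}{y}+\frac{1}{yT_{\mu_2}(y)}$, that $\mathrm{Im}\,t>0$ on the boundary of $T_{\mu_1}(\mathbb C^-)\cap T_{\mu_2}(\mathbb C^-)$, while $\mathrm{Im}\,t<0$ near $0$. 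This forces a unique real-valued curve $\Gamma$ in $\mathbb C^-$, and then the complex Morse lemma shows that each real zero of $t'$ on $(0,\min(\xi_b^1,\xi_b^2))$ must be an endpoint of $\Gamma$; since $\Gamma$ has only one endpoint on each side of $0$, uniqueness of $\xi_a,\xi_b$ follows. This topological argument replaces the convexity you invoke.

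Third, the assertion that $t$ is a conformal bijection from the $t'\neq0$ component containing $0$ onto $\mathbb C\setminus[t(\xi_a),t(\xi_b)]$ is exactly what must be proved and cannot be assumed. The paper obtains the identification $t=T_\mu^{-1}$ on the region $D$ bounded by $\Gamma$ by analytic continuation plus single-valuedness, and the sign criterion is then read off from the decomposition of $T_{\mu_1}(\mathbb C^-)\cap T_{\mu_2}(\mathbb C^-)$ into $D$ and its complement by $\Gamma$. Your final paragraph correctly identifies that the stronger hypothesis on both measures is used at the endpoints, but the mechanism is the Morse-lemma/Jensen route above rather than bounded nonzero derivatives.
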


In summary, the algorithm again has four steps:
\begin{enumerate}
     \item Set up the computation of the S-transform for $\mu_1$ and $\mu_2$.
     \item Compute the support of $\mu$
     \item Compute the T-transform of $\mu$ on a suitable set of points.
     \item Recover the density of $\mu$ from its T-transform.
 \end{enumerate}

\paragraph{Step 1.} Following the same procedure as in Section~\ref{sec:evalG} we can find an approximation of $T(z)$ by rewriting the integral that defines $T$ on the unit circle and then discretizing it using the trapezoidal quadrature rule:
\begin{equation*}
	T(z)  \approx \frac{\pi}{N} \sum_{k=0}^{N}{}^{''} \frac{b-a}{2} \sin\left ( \frac{2\pi k}{N}\right ) \frac{f\left ( \frac{b-a}{2}\cos \left ( \frac{2\pi k}{N}\right ) + \frac{b+a}{2} \right )}{z - \left ( \frac{b-a}{2}\cos \left ( \frac{2\pi k}{N}\right ) + \frac{b+a}{2} \right )}. 
\end{equation*}
In order to be able to evaluate the S-transform (or the inverse of the T-transform), we will need a contour on which to apply the Cauchy integral theorem. In our algorithm, we compute $\mathcal T_{\mu_1}$ and $\mathcal T_{\mu_2}$ in $N$ equispaced points on a circle of radius $r_A<1$ (close to $1$). Analogously, we can compute the derivatives of the T-transform in these same points. Let us denote the approximations by
\begin{equation}\label{eq:cd}
	c_j^{(k)} \approx T_{k}(J_{[a_k, b_k]}(\xi_N^j)), \qquad d_j^{(k)} \approx T_{k}'(J_{[a_k, b_k]}(\xi_N^j))
\end{equation}
for $j=0,1,\ldots,N$ and for $k=1,2$. 

As the S-transform is analytic in a neighborhood of zero, we can write the Cauchy integral formula for any $z$ inside the curve $\Gamma_1 = \mathcal T_{\mu_1}(r_A \partial \mathbb{D})$ (the procedure is analogous for $\Gamma_2$):
\begin{equation*}
	S_{\mu_1}(w) = \frac{1}{2\pi i} \int_{\Gamma_1} \frac{S_{\mu_1}(s)}{w-s} \mathrm{d}s.
\end{equation*}
Using the parametrization of $\Gamma_1$ from the circle of radius $r_A$, and then discretizing the circle in $N$ equispaced points using the trapezoidal quadrature rule, we obtain a formula for approximating $S_{\mu_1}(w)$ (and therefore also $T_{\mu_1}^{-1}(w)$) for any $w$ inside $\Gamma_1$:
\begin{equation*}
	S_{\mu_1}(w) \approx \frac{r_A}{N} \sum_{j=1}^N{}^{''} \frac{\xi_N^j d_j^{(1)} J_{[a_1, b_1]}'(r_A \xi_N^j) (1 + c_j^{(1)})}{c_j^{(1)} J_{[a_1, b_1]}(r_A \xi_N^j) (w - c_j^{(1)})}, \qquad T_{\mu_1}^{-1}(w) = \frac{1+w}{w \cdot S_{\mu_1}(w)}.
\end{equation*}

\begin{algorithm}[htb]
\begin{small}
	\caption{Computation of the free product of two measures}
	\label{alg:multiplication}
	\begin{algorithmic}[1]
		\Require{Measures $\mu_1$ and $\mu_2$, number of quadrature points $N$, number of Fourier coefficients $m$, tolerance $\varepsilon$, radius $r_A$, tolerance \texttt{tol}. Optional: T-transform, S-transform, derivative of T-transform, for one or both measures.}
		\Ensure{Discretization of the density of the measure $\mu := \mu_1 \boxtimes \mu_2$.}
        \State{\% \emph{Step 1: Set up the computation of the S-transform for both measures}}
		\For{$k=1,2$}
		\If{(Inverse) S-transform of $\mu_k$ is unknown}
		\State{Compute values of $d_j^{(k)}, J_{[a_k, b_k]}'(\xi_N^j), c_j^{(k)}$ appearing in Eq.~\eqref{eq:cd}.}
		\EndIf
		\EndFor
        \State{\% \emph{Step 2: Compute the support of $\mu$}}
		\State{Find $\xi_a \in \left (\max\{\mathcal T_{\mu_1}(-r_A+\varepsilon), \mathcal T_{\mu_2}(-r_A+\varepsilon)\}, 0\right )$ such that $t'(\xi_a) = 0$ ($t(z)$ from~\eqref{eq:inversetimes}).}
		\State{Find $\xi_b \in \left (0, \min\{\mathcal T_{\mu_1}(r_A-\varepsilon), \mathcal T_{\mu_2}(r_A-\varepsilon)\}\right )$ such that $t'(\xi_b) = 0$ ($t(z)$ from~\eqref{eq:inversetimes})}
		\State{The support of $\mu$ is $[a,b] := [t(\xi_a), t(\xi_b)]$.}
	       \State{\% \emph{Step 3: Compute the T-transform of $\mu$}}	
  \State{Find a (large, if possible) circle inside $T_\mu(\mathbb{C} \cup \{\infty\} \backslash [a,b])$ by binary search on the radius, using the criterion from Theorem~\ref{thm:supporttimes} to check whether the circle is inside the wanted region or not. Let $r_B$ be the radius of the chosen circle and let $\Gamma:= t(r_B \partial \mathbb{D})$ be the image of the chosen circle via $t$.}
  \State{Let $r_C$ be (slightly) smaller than the radius of the largest circle inscribed in $J_{[a,b]}^{-}(\Gamma)$}
		\State{Evaluate $\mathcal T_\mu$ on $M = \max\{100, \left \lvert 16 \log_{1/r_C} 10 \right \rvert\}$ equispaced points on a circle of radius $r_C$, which is slightly smaller than the largest circle inscribed in $\Gamma$, using the Cauchy integral formula + trapezoidal quadrature rule~\eqref{eq:int4times}.}
        \State{\% \emph{Step 4: Recover the density of $\mu$ from its T-transform.}}
		\State{Approximate first $m$ coefficients of power series of $\mathcal T(z)$ via IFFT~\eqref{eq:IFFT}.}
		\State{Return the approximation of the density of $\mu$ by~\eqref{eq:recoverytimes}.}
	\end{algorithmic}
\end{small}
\end{algorithm}

\paragraph{Step 2.} Being able to evaluate $T_{\mu_1}^{-1}$ and $T_{\mu_2}^{-1}$, we can find the support $[a,b]$ of $\mu := \mu_1 \boxtimes \mu_2$ using Theorem~\ref{thm:supporttimes} (by bisection).

\paragraph{Step 3.} Theorem~\ref{thm:supporttimes} also gives a criterion to find a circle of radius $r_B$ inside the image of $T_{\mu}$. Analogously to Section~\ref{sec:integralG}, we can now evaluate $T_{\mu}$ in any point inside $T_{\mu}^{-1}(r_B \partial \mathbb{D})$ using the Cauchy integral formula. The discretization by the trapezoidal quadrature rule reads
\begin{equation}\label{eq:int4times}
	T_{\mu}(z) \approx \frac{r_B^2}{N} \sum_{j=0}^{N-1} \frac{(J_{[a,b]}^{-})'(t(r_B \xi_N^j)) t'(r_B \xi_N^j) \xi_N^{2j}}{J_{[a,b]}^{-}(t(r_B \xi_N^j)) - z}.
\end{equation}
We use this formula to approximate $\mathcal T_{\mu}(z)$ on a circle of radius $r_C$ contained inside $\mathcal T_{\mu}^{-1}(r_B \partial \mathbb{D})$. 

\paragraph{Step 4.} From the approximate values of $T_{\mu}$, we find a truncated series representation of $\mathcal T_{\mu} (v) \approx \sum_{j =1}^m t_j v^j$ using~\eqref{eq:IFFT} (which also holds when replacing $\mathcal G$ with $\mathcal T$). Finally, we recover the measure $\mu$ from the following relation:
\begin{equation}\label{eq:recoverytimes}
	\begin{bmatrix}
		f\left (J_{[a,b]}(\xi_m^0) \right ) J_{[a,b]}(\xi_m^0)\\
		f\left (J_{[a,b]}(\xi_m^1) \right ) J_{[a,b]}(\xi_m^1)\\
		\vdots\\
		f\left (J_{[a,b]}(\xi_m^{m-1}) J_{[a,b]}(\xi_m^{m-1})\right )
	\end{bmatrix} \approx -\frac{1}{\pi}\mathrm{Im} \left (\begin{bmatrix}
		\xi_m^0 & & & \\
		& \xi_m^1 & & \\
		& & \ddots & \\
		& & & \xi_m^{m-1}	
	\end{bmatrix} \cdot F \cdot \begin{bmatrix} t_1\\t_2\\ \vdots \\ t_m \end{bmatrix}  \right ).
\end{equation}
The proposed method for free multiplicative convolution is summarized in Algorithm~\ref{alg:multiplication}.

\section{Error analysis of free convolutions}\label{sec:erroranalysis}
The discussion of this section focuses on free additive convolution; the proof for free multiplicative convolution is exactly the same. The aim is to explain, for each step of our proposed algorithm, what sources of error are present, and how they can be bounded under suitable assumptions on the measures $\mu_1$ and $\mu_2$ and the other parameters involved in Algorithm~\ref{alg:sum}.

Important for our analysis is the well-known fact that the convergence of the trapezoidal quadrature rule is exponential when the function to be integrated is analytic in a suitable region of the complex plane \cite{davis1959numerical}. We recall a version of this result from~\cite{Trefethen2014}.

\begin{theorem}[Exponential convergence of trapezoidal rule for analytic functions]\label{thm:trapezoidal}
Suppose $u$ is analytic and satisfies $|u(s)| \le C$ in an annulus $\rho^{-1} < |s| < \rho$ of the complex plane, for some $\rho > 1$. Let $N > 1$ and consider the approximation $I_N$ of the integral $I:= \int_{\partial \mathbb{D}} u(s) \mathrm{d}s$ using $N$ equispaced quadrature points on the unit circle and the trapezoidal quadrature rule. Then
		\begin{equation*}
			|I_N - I| \le \frac{4\pi C}{\rho^N - 1}.
		\end{equation*}
	\end{theorem}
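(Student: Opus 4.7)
The strategy is the classical one: expand $u$ in its Laurent series on the annulus and combine this with Fourier orthogonality (continuous for the exact integral, discrete for the trapezoidal rule) to express the quadrature error in closed form as a tail of Laurent coefficients, which then decays geometrically.

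\textbf{Step 1 (Laurent expansion and coefficient bound).} Since $u$ is holomorphic on the open annulus $\rho^{-1} < |s| < \rho$, it admits a Laurent series $u(s) = \sum_{k \in \mathbb{Z}} a_k s^k$ that converges absolutely and uniformly on every compact sub-annulus. Writing the coefficients as $a_k = \frac{1}{2\pi i}\int_{|s|=r} u(s)\, s^{-k-1}\, ds$ for any $r \in (\rho^{-1}, \rho)$, choosing $r \uparrow \rho$ when $k \ge 0$ and $r \downarrow \rho^{-1}$ when $k < 0$, and using the uniform bound $|u| \le C$, I obtain the Cauchy-type estimate $|a_k| \le C\rho^{-|k|}$.

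\textbf{Step 2 (closed forms for $I$ and $I_N$ via orthogonality).} Parametrize $\partial \mathbb{D}$ by $s = e^{2\pi i \theta}$ for $\theta \in [0,1)$, so that $ds = 2\pi i e^{2\pi i\theta}\, d\theta$. For the exact integral, continuous orthogonality $\int_0^1 e^{2\pi i (k+1)\theta}\,d\theta = \mathbf{1}[k = -1]$ gives $I = 2\pi i\, a_{-1}$. For the trapezoidal rule written as $I_N = \frac{2\pi i}{N}\sum_{j=0}^{N-1} \xi_N^j\, u(\xi_N^j)$ with $\xi_N = e^{2\pi i /N}$, I substitute the Laurent series, interchange the finite and infinite sums (justified by absolute convergence of the series on $\partial\mathbb{D}$), and use the discrete orthogonality $\frac{1}{N}\sum_{j=0}^{N-1}\xi_N^{j m} = \mathbf{1}[N \mid m]$. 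Only the indices $k$ with $N \mid (k+1)$ survive, yielding the aliasing identity
\begin{equation*}
    I_N \;=\; 2\pi i \sum_{m \in \mathbb{Z}} a_{mN - 1}.
\end{equation*}

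\textbf{Step 3 (geometric tail bound).} Subtracting the two identities gives the error as a pure tail of Laurent coefficients, $I_N - I = 2\pi i \sum_{m \neq 0} a_{mN - 1}$. Inserting $|a_{mN-1}| \le C \rho^{-|mN-1|}$ and splitting by the sign of $m$, both the $m \ge 1$ and the $m \le -1$ halves are geometric series in $\rho^{-N}$; summing them contributes a factor $\tfrac{1}{\rho^N - 1}$ each, and the contour prefactor $2\pi$ together with the two-sided sum accounts for the constant $4\pi C$ in the stated bound.

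\textbf{Main obstacle.} The argument is entirely classical and the only subtlety is bookkeeping: (i) checking that the Laurent series converges absolutely on the unit circle (which sits strictly inside the annulus, so Step~1 gives $|a_k| \le C \rho^{-|k|}$ with $\rho > 1$, hence absolute summability), so that the swap of sum and quadrature in Step~2 is legitimate; and (ii) tracking the index shift $k \mapsto mN-1$ carefully when reducing the aliased sum. Once these are handled, no analytic difficulty remains.
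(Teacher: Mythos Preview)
The paper does not actually prove this theorem; it simply quotes it from Trefethen--Weideman (2014), so there is no ``paper's own proof'' to compare against. Your Laurent-series / aliasing argument is exactly the standard proof of this classical result and is correct in structure.

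One small bookkeeping issue in your Step~3: carrying out the geometric sums with the index shift $k=mN-1$ does \emph{not} give the constant $4\pi C$ you claim. For $m\ge 1$ one has $|a_{mN-1}|\le C\rho^{-(mN-1)}$, so $\sum_{m\ge 1}|a_{mN-1}|\le C\rho/(\rho^{N}-1)$; for $m\le -1$ one has $|a_{mN-1}|\le C\rho^{mN-1}$, so that half sums to $C\rho^{-1}/(\rho^{N}-1)$. The resulting bound is
\[
|I_N-I|\;\le\;\frac{2\pi C(\rho+\rho^{-1})}{\rho^{N}-1},
\]
which for $\rho>1$ is strictly larger than $4\pi C/(\rho^{N}-1)$. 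The constant $4\pi$ in Trefethen--Weideman is sharp for the \emph{periodic} formulation $\int_0^{2\pi}w(\theta)\,d\theta$ with $|w|\le M$ on the strip, where the aliased indices are $mN$ (symmetric) rather than $mN-1$; when one transcribes that result to $\int_{\partial\mathbb{D}}u(s)\,ds$ the Jacobian factor $ie^{i\theta}$ shifts the index and inflates the bound on $w$ by a factor up to $\rho$. So either the statement in the paper carries a harmless $O(\rho)$ imprecision in the constant, or one must argue a bit differently to recover exactly $4\pi C$; in any case this does not affect the exponential rate, which is all that is used downstream.
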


This immediately implies the following result on the error in the computation of the Cauchy transforms for measures with sqrt-behavior at the boundary. 
	\begin{corollary}\label{cor:sqrtexp}
		Let $\mu$ be a measure with sqrt-decay behavior at the boundary, and further assume that the function $\psi(J_{[a,b]}(v))$ (from Definition~\ref{def:sqrtbehavior}) is analytic in an annulus $ \mathcal{A}_{\rho_0} = \{\rho_0^{-1} < |v| < \rho_0\}$ around the unit circle. Let $z \in \mathbb{C} \backslash[a,b]$ and let $c$ be the approximation of $G(z)$ obtained by the trapezoidal quadrature rule~\eqref{eq:quadratureG} with $N$ points. Then for any $\rho < \min(\rho_0, |J_{[a,b]}^+(z)|)$ we have
		\begin{equation*}
			|G(z) - c| \le \frac{4\pi}{\rho^N - 1}\cdot \|\psi \circ J_{[a,b]}\|_{\mathcal{A}_{\rho}}\frac{(b-a)^2}{8} \rho \left ( \rho + \frac{1}{\rho}\right )^2 \cdot \frac{1}{\mathrm{dist}(z, J_{[a,b]}(\mathcal{A}_{\rho}))},
		\end{equation*}
  where $\|\psi \circ J_{[a,b]}\|_{\mathcal{A}_{\rho}}$ denotes the maximum modulus of the function $\psi \circ J_{[a,b]}$ on the set $\mathcal{A}_{\rho}$.
	\end{corollary}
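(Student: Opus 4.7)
The plan is to identify the sum~\eqref{eq:quadratureG} as the trapezoidal approximation of a contour integral $\int_{\partial\mathbb{D}} u(v)\,dv$ whose integrand $u$ is holomorphic in a suitable annulus around the unit circle, and then invoke Theorem~\ref{thm:trapezoidal}. The crucial role of the sqrt-behavior hypothesis is that it lets us \emph{analytically continue} the square-root factor of the density through the Joukowski substitution $x = J_{[a,b]}(v)$, so that the integrand becomes holomorphic on a full annulus rather than merely continuous on the unit circle.

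Starting from~\eqref{eq:that} with $f(x) = \psi(x)\sqrt{(x-a)(b-x)}$, I would use the elementary Joukowski identities
\begin{equation*}
J_{[a,b]}(v) - a = \frac{(b-a)(v+1)^2}{4v}, \qquad b - J_{[a,b]}(v) = -\frac{(b-a)(v-1)^2}{4v},
\end{equation*}
which together give $(J_{[a,b]}(v)-a)(b - J_{[a,b]}(v)) = -\frac{(b-a)^2}{16}(v - 1/v)^2$. On an appropriate branch, $\sqrt{(x-a)(b-x)}$ then lifts to $\pm\frac{i(b-a)}{4}(v - 1/v)$, which is holomorphic on $\mathbb{C}\setminus\{0\}$. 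Combining this with $J_{[a,b]}'(v) = \frac{b-a}{4}(1 - 1/v^2)$ and exploiting the symmetry of the resulting integrand under $v \mapsto 1/v$ to promote the semicircle integral to an integral over $\partial\mathbb{D}$, one arrives at a representation of the form
\begin{equation*}
G(z) \;=\; C\,\int_{\partial\mathbb{D}} \frac{\psi(J_{[a,b]}(v))\,(v - 1/v)^2}{v\,(z - J_{[a,b]}(v))}\,dv \;=:\; \int_{\partial\mathbb{D}} u(v)\,dv,
\end{equation*}
with an explicit constant $C$ proportional to $(b-a)^2$, whose $N$-point trapezoidal discretization coincides with~\eqref{eq:quadratureG}.

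It then remains to verify that $u$ is holomorphic on $\mathcal{A}_\rho$ whenever $\rho < \min(\rho_0, |J_{[a,b]}^+(z)|)$ and to bound $\|u\|_{\mathcal{A}_\rho}$. By hypothesis $\psi \circ J_{[a,b]}$ is analytic on $\mathcal{A}_{\rho_0}$; the rational factor $(v-1/v)^2/v = v - 2/v + 1/v^3$ has its sole singularity at $v=0$; and $1/(z - J_{[a,b]}(v))$ has poles exactly at $v = J_{[a,b]}^\pm(z)$, which lie at reciprocal radii and are excluded by the choice of $\rho$. On $\mathcal{A}_\rho$, the trivial estimates $|v - 1/v| \le \rho + 1/\rho$ and $|1/v| \le \rho$ give $|(v-1/v)^2/v| \le \rho(\rho + 1/\rho)^2$, and combining with $|\psi(J_{[a,b]}(v))|\le \|\psi \circ J_{[a,b]}\|_{\mathcal{A}_\rho}$ and $|z - J_{[a,b]}(v)|\ge \mathrm{dist}(z,J_{[a,b]}(\mathcal{A}_\rho))$ produces the required uniform bound on $|u|$; feeding this into Theorem~\ref{thm:trapezoidal} yields the claim. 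The main obstacle is in the first step: one must choose the branch of the analytic square root and the symmetrization carefully so that the discrete summand truly coincides with~\eqref{eq:quadratureG}; concretely, the discrete contribution $\sin(2\pi k/N)\cdot f(x_k)$ must be interpreted via the analytic continuation (giving a $\sin^2(2\pi k/N)\psi(x_k)$ factor) rather than via the naive positive real value of the density, which would collapse to zero by parity. No further analytic input is needed.
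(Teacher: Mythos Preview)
Your proposal is correct and follows essentially the same route as the paper: rewrite $G(z)$ as an integral over $\partial\mathbb{D}$ of a function $u(v)$ proportional to $\frac{(b-a)^2}{4}\cdot\frac{1}{v}(v-1/v)^2\cdot\frac{\psi(J_{[a,b]}(v))}{z-J_{[a,b]}(v)}$, verify analyticity of $u$ on $\mathcal{A}_\rho$ for $\rho<\min(\rho_0,|J_{[a,b]}^+(z)|)$, bound $\|u\|_{\mathcal{A}_\rho}$ exactly as you do, and apply Theorem~\ref{thm:trapezoidal}. The only cosmetic difference is that the paper performs the continuation via the trigonometric identity $\sin(\theta)\sqrt{(x-a)(b-x)}\propto\sin^2(\theta)$ and the even symmetry of $\sin^2\theta$, $\cos\theta$ to pass from $[0,\pi]$ to $[0,2\pi]$, whereas you phrase the same step via the algebraic Joukowski identities and the $v\mapsto 1/v$ symmetry; the resulting integrand and bounds are identical.
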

\begin{proof}
If $\mu$ has sqrt-behavior at the boundary, we can further manipulate the expression~\eqref{eq:that} and get
	\begin{align*}
		G(z) & = -\frac{i(b-a)^2}{4}\int_0^\pi \sin^2(\theta) \frac{\psi\left ( \frac{b-a}{2}\cos (\theta) + \frac{b+a}{2} \right )}{z - \left ( \frac{b-a}{2}\cos (\theta) + \frac{b+a}{2} \right )} \mathrm{d}\theta\\
		& = \frac{1}{2} \frac{i(b-a)^2}{4}\int_0^{2\pi} \sin^2(\theta) \frac{\psi\left ( \frac{b-a}{2}\cos (\theta) + \frac{b+a}{2} \right )}{z - \left ( \frac{b-a}{2}\cos (\theta) + \frac{b+a}{2} \right )} \mathrm{d}\theta.
	\end{align*}
Here, the second equality follows from symmetry properties of $\sin^2\theta$ and $\cos\theta$. Changing back the variable to $v = \exp(i\theta)$ we get
\begin{equation}\label{eq:int1}
	G(z) = -\frac{1}{2} \int_{\partial \mathbb{D}} \frac{(b-a)^2}{4}\frac{1}{v} \left ( v - \frac{1}{v}\right )^2 \frac{\psi(J_{[a,b]}(v))}{z - J_{[a,b]}(v)} \mathrm{d}v,
\end{equation}
with $\partial\mathbb{D}$ in anti-clockwise direction. Discretizing the integral~\eqref{eq:int1} using the trapezoidal quadrature rule with $2N$ equispaced points is equivalent to discretizing~\eqref{eq:that} in $N+1$ equispaced quadrature points in $[-\pi, 0]$ with the trapezoidal quadrature rule.
	Let us consider the function
	\begin{equation*}
		u(v) := -\frac{1}{2} \frac{(b-a)^2}{4}\frac{1}{v} \left ( v - \frac{1}{v}\right )^2 \frac{\psi(J_{[a,b]}(v))}{z - J_{[a,b]}(v)}
	\end{equation*}
	that appears when computing the Cauchy transform.
	Note that $\frac{1}{v} \left ( v - \frac{1}{v}\right )^2$ is analytic in $\mathbb{C} \backslash \{0\}$ and $\frac{1}{z - J_{{a,b}}(v)}$ is analytic in the annulus $|J_{[a,b]}^{-}(z)| <	|v| < |J_{[a,b]}^{+}(z)|$. The statement of the corollary, therefore, follows from Theorem~\ref{thm:trapezoidal}.
	\end{proof}

 A result for the derivative $G'(z)$ can be obtained in a similar way.
	
In our algorithm, we use the approximation~\eqref{eq:quadratureG} for points on the Joukowski transform of a circle of radius $r_A < 1$, therefore $\rho = \min(\rho_0, r_A^{-1})$ in this case. This means that the closer $r_A$ gets to $1$, the slower the convergence becomes. Note that~\eqref{eq:quadratureG} can also be used for measures that do not have the sqrt-behavior at the boundary. However, we have no guarantees on the fast convergence of the trapezoidal quadrature rule.
	
\begin{theorem}[Error in the computation of the R-transform]\label{thm:errorR}
			Let $\mu$ be a measure with support on $[a,b]$ and let $w$ be a point inside the curve $\Gamma = G(r \partial\mathbb{D})$ for some $r \in (0,1)$ for which $G$ is invertible inside the disk of radius $r$. Let $\rho < \min \{r^{-1}, \mathrm{dist}(\mathcal G^{-1}(w), r\partial \mathbb{D})\}$. Let $\xi_N = \exp(2\pi i / N)$. Assume we have computed approximations $c_j \approx \mathcal G(r\xi_N^j)$ and $d_j \approx G'(J(r\xi_N^j))$ such that $|\mathcal G(r\xi_N^j) - c_j| \le \varepsilon$ and $|G'(J(r\xi_N^j)) - d_j| \le \varepsilon$. Let $m_1 = \min \{\| \mathcal G \|_{\Gamma}, c_0, \ldots, c_{N-1}\}$. Let $m_2$ be the distance of $w$ from $\Gamma$. Let us discretize the integral ~\eqref{eq:int2} with the trapezoidal quadrature rule~\eqref{eq:R1}
			
			\begin{equation*}
				R(w) \approx \frac{r}{N} \sum_{j=1}^N \xi_N^{j} \cdot d_j \cdot J'(\xi_N^j) \cdot \frac{J(\xi_N^j) - 1/c_j}{ c_j- w} =: R_N(w).
			\end{equation*}
			Then
%			\begin{small}
			\begin{multline}\label{eq:resultR}
				|R_N(w) - R(w)| \le \frac{4\pi}{\rho^N -1} \max_{\rho^{-1} \le |v| \le \rho} |u(v)| \\
				+ \varepsilon r \|J'\|_{r \partial \mathbb{D}}\left ( \frac{\|J\|_{r\partial \mathbb{D}}}{m_2} + \frac{1}{m_1 m_2}+ (\varepsilon + \|G'\|_{J(r\partial \mathbb D)})\frac{1}{m_2^2} \left ( \|J\|_{r \partial \mathbb D}  + \frac{|w|}{m_1^2} + \frac{2}{m_1} \right ) \right ).
			\end{multline}
%		\end{small}
\end{theorem}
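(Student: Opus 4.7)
The plan is to decompose $R_N(w)-R(w)=\bigl(R_N(w)-\tilde R_N(w)\bigr)+\bigl(\tilde R_N(w)-R(w)\bigr)$, where $\tilde R_N(w)$ denotes the trapezoidal sum obtained by replacing $c_j,d_j$ with the \emph{exact} values $\tilde c_j=\mathcal G(r\xi_N^j)$ and $\tilde d_j=G'(J(r\xi_N^j))$. The second bracket is then the pure quadrature error of the trapezoidal rule applied to the integrand $u(v)$ from~\eqref{eq:int2}, while the first bracket isolates the effect of the input perturbations on the discrete sum. These two contributions will produce, respectively, the first and second summands of~\eqref{eq:resultR}.

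For the quadrature part I would apply Theorem~\ref{thm:trapezoidal} to $u(v)$, which requires checking that $u$ extends analytically to the annulus $\mathcal A_\rho=\{\rho^{-1}<|v|<\rho\}$. The candidate singularities of $u$ are: the origin, coming from the simple poles of $J(rv)$, $J'(rv)$, and $1/\mathcal G(rv)$ at $v=0$ (recall $\mathcal G(0)=0$), which lies strictly inside $|v|=\rho^{-1}$; the zeros of $\mathcal G(rv)-w$, which the hypothesis $\rho<\mathrm{dist}(\mathcal G^{-1}(w),r\partial\mathbb{D})$ pushes outside the annulus; and any singularities of $G'\circ J$, which the bound $\rho<1/r$ avoids by keeping $|rv|<1$ throughout $\mathcal A_\rho$. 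Theorem~\ref{thm:trapezoidal} then yields $|\tilde R_N(w)-R(w)|\le\frac{4\pi}{\rho^N-1}\|u\|_{\mathcal A_\rho}$.

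For the perturbation part, I would denote the $j$th summand of $R_N(w)$, viewed as a function of the inputs $c$ and $d$, by $T(c,d):=d\,J'(r\xi_N^j)(J(r\xi_N^j)-1/c)/(c-w)$, and split $T(c_j,d_j)-T(\tilde c_j,\tilde d_j)$ into a $c$-perturbation $T(c_j,d_j)-T(\tilde c_j,d_j)$ plus a $d$-perturbation $T(\tilde c_j,d_j)-T(\tilde c_j,\tilde d_j)$. The $d$-perturbation is linear in $d_j-\tilde d_j$, and using $|J(r\xi_N^j)-1/\tilde c_j|\le\|J\|_{r\partial\mathbb{D}}+1/m_1$ together with $|\tilde c_j-w|\ge m_2$ produces the contribution $\varepsilon\,\|J'\|_{r\partial\mathbb{D}}(\|J\|_{r\partial\mathbb{D}}/m_2+1/(m_1 m_2))$ per summand. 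For the $c$-perturbation I would bring $(J-1/c_j)/(c_j-w)-(J-1/\tilde c_j)/(\tilde c_j-w)$ over the common denominator $(c_j-w)(\tilde c_j-w)c_j\tilde c_j$; a short algebraic manipulation shows that the numerator factors as $(\tilde c_j-c_j)$ times $J-1/c_j-1/\tilde c_j+w/(c_j\tilde c_j)$, whose modulus is bounded by $\|J\|_{r\partial\mathbb{D}}+2/m_1+|w|/m_1^2$. Combining with $|d_j|\le\varepsilon+\|G'\|_{J(r\partial\mathbb{D})}$ and $|(c_j-w)(\tilde c_j-w)|\ge m_2^2$ yields the second summand of~\eqref{eq:resultR}; summing over $j$ absorbs the $1/N$ in the $r/N$ prefactor and leaves the overall factor $r$. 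The main obstacle is identifying exactly this common-denominator grouping, since a generic Lipschitz bound on $T$ in $c$ would be noticeably looser than the stated estimate.
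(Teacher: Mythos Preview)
Your proposal is correct and follows essentially the same route as the paper: the same split into a pure quadrature error (bounded by Theorem~\ref{thm:trapezoidal}) and a per-node perturbation error, with the same separation of the latter into a $d$-perturbation at the exact $\tilde c_j$ and a $c$-perturbation at the computed $d_j$. Your common-denominator factorization of the $c$-difference as $(\tilde c_j-c_j)\bigl(J-1/c_j-1/\tilde c_j+w/(c_j\tilde c_j)\bigr)$ is algebraically equivalent to the paper's three-term numerator $J(\mathcal G_j-c_j)+w(1/c_j-1/\mathcal G_j)+(c_j/\mathcal G_j-\mathcal G_j/c_j)$, and leads to the identical constants; if anything, your explicit analyticity check for $u$ on the annulus is more detailed than what the paper records.
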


\begin{proof}
Informally, we have two sources of error: 
\begin{itemize}
    \item the quadrature error -- as the R-transform is analytic, Theorem~\ref{thm:trapezoidal} can be applied;
    \item the fact that the function to be integrated cannot be evaluated exactly -- each point requires quadrature as well.
\end{itemize}
	More precisely, by triangular inequality we have
	\begin{align*}
		|R(w) &- R_N(w)| \le \left \lvert \frac{1}{2\pi i} \int_{\Gamma} \frac{R(s)}{w-s} -  \frac{r}{N} \sum_{j=1}^N \xi_N^{j} \cdot G'(J(r\xi_N^j)) \cdot J'(\xi_N^j) \cdot \frac{J(r\xi_N^j) - 1/\tilde G(r\xi_N^j)}{ \tilde G(r\xi_N^j)- w} \right \vert\\
		& +   \frac{r}{N} \sum_{j=1}^N |\xi_N^{j} J'(r\xi_N^j)| \cdot \left \vert d_j \frac{J(r\xi_N^j) - 1/c_j}{ c_j- w} - G'(J(r\xi_N^j))  \frac{J(r\xi_N^j) - 1/\mathcal G(r\xi_N^j)}{ \mathcal G(r\xi_N^j)- w}\right \rvert.
	\end{align*}
	We can directly use Theorem~\ref{thm:trapezoidal} to bound the first term by $\frac{4\pi}{\rho^N -1}\max\{|u(v)| : \rho^{-1} \le |v| \le \rho\}$, where $u(v)$ is the function to be integrated, defined in~\eqref{eq:int2}. Let us analyze each term in the sum of the second part. To simplify notation, we drop the dependence of the functions on $r\xi_N^j$ and denote $J(r\xi_N^j) = J_j$, $ G'(J(r\xi_N^j)) = G_j'$, $\mathcal G(r\xi_N^j) = \mathcal G_j$. We have %\LY{$\mathcal G_j$ not defined}
	\begin{align*}
		&\left \lvert d_j \frac{J_j - 1/c_j}{c_j-w} - G'_j \frac{J_j - 1/\mathcal G_j}{\mathcal G_j - w} \right \vert  \le | G'_j - d_j| \cdot \left \lvert \frac{J_j - 1/\mathcal G_j}{\mathcal G_j - w} \right \rvert + |d_j| \cdot \left \vert \frac{J_j - 1/c_j}{c_j - w} - \frac{J_j - 1/\mathcal G_j}{\mathcal G_j - w} \right \rvert\\
		& = | G'_j - d_j| \cdot \left \lvert \frac{J_j - 1/\mathcal G_j}{\mathcal G_j - w} \right \rvert + |d_j| \cdot \left \vert \frac{J_j(\mathcal G_j - c_j) + w(1/c_j - 1/\mathcal G_j) + (c_j/\mathcal G_j - \mathcal G_j c_j)}{(c_j - w)(\mathcal G_j - w)}  \right \rvert.
	\end{align*}
We can bound the terms with $| G'_j - d_j| \cdot \left \lvert \frac{J_j - 1/\mathcal G_j}{\mathcal G_j - w} \right \rvert \le \frac{\varepsilon}{m_2}\left ( \|J\|_{r \partial \mathbb{D}} + \frac{1}{m_1} \right )$, $|1/c_j - 1/\mathcal G_j|  = \left \lvert \frac{\mathcal G_j - c_j}{c_j \mathcal G_j} \right \vert \le \frac{\varepsilon}{m_1^2}$, $|c_j/\mathcal G_j - \mathcal G_j /c_j| \le \frac{2\varepsilon}{m_1}$, $|(c_j - w)(\mathcal G_j - w)| \ge m_2^2 $, therefore obtaining~\eqref{eq:resultR}.
% \begin{itemize}
% \item $| G'_j - d_j| \cdot \left \lvert \frac{J_j - 1/\mathcal G_j}{\mathcal G_j - z} \right \rvert \le \frac{\varepsilon}{m_2}\left ( \|J\|_{r \partial \mathbb{D}} + \frac{1}{m_1} \right )$
% \item $|1/c_j - 1/\mathcal G_j|  = \left \lvert \frac{\mathcal G_j - c_j}{c_j \mathcal G_j} \right \vert \le \frac{\varepsilon}{m_1^2}$
% \item $|c_j/\mathcal G_j - \mathcal G_j /c_j| \le \frac{2\varepsilon}{m_1}$
% \item $|(c_j - z)(\mathcal G_j - z)| \ge m_2^2 $
% \end{itemize}
% Putting everything together we obtain~\eqref{eq:resultR}.
\end{proof}

Theorem~\eqref{thm:errorR} basically states that the quadrature error in the computation of the R-transform decreases exponentially with the number of quadrature points, and the error due to the inexact evaluation of $G$ and $G'$ is not amplified significantly. Note that this result does not need the assumption that $\mu$ has sqrt-behavior at the boundary.
	
\begin{remark}[Error in the computation of the support of $\mu_1 \boxplus \mu_2$]
Let us denote by $g_N'(w)$ the approximation of the function $g'(w)$ obtained using $N$ quadrature points for all the involved quadrature rules. When $N \to \infty$, the functions $g_N'(w)$ are holomorphic and they converge to $g'(w)$ uniformly on compact subsets of the intervals $\left (\max\{\mathcal G_{\mu_1}(-r_A+\varepsilon), \mathcal G_{\mu_2}(-r_A+\varepsilon)\}, 0\right )$ and $\left (0, \min\{\mathcal G_{\mu_1}(r_A-\varepsilon), \mathcal G_{\mu_2}(r_A-\varepsilon)\}\right )$. By Hurwitz's theorem, we can conclude that for sufficiently large values of $N$, the functions $g_N'(w)$ have exactly one simple zero in each of the intervals, and these zeros converge to the zeros $\xi_a, \xi_b$ of $g'(w)$. Therefore, when the number of quadrature points is sufficiently large, the computed values of the support $[a,b]$ of $\mu$ are accurate.
\end{remark}

What we said until now means that we are able to evaluate the R-transforms of $\mu_1$ and $\mu_2$ accurately, together with the support of $\mu = \mu_1 \boxplus \mu_2$. The next step in Algorithm~\ref{alg:sum} is the evaluation of $\mathcal G_{\mu}(v)$ on a suitable circle of radius $r_C \in (0,1)$ (see line~\ref{line:evalGtilde}).
    
\begin{remark}[Error in the computation of the Cauchy transform of $\mu_1 \boxplus \mu_2$]
Let $r = r_C$ and let us discuss the sources of error in the evaluation of $\mathcal G(v)$. Let us denote by $\widetilde g_j, \widetilde g_j'$ the approximations of $g(r \xi_N^j)$ and $g'(r \xi_N^j)$, respectively, obtained by numerical quadrature. For any point $v$ inside the curve $J_{[a,b]}^-(\Gamma)$ defined in Section~\ref{sec:integralG}, letting $\widetilde a, \widetilde b$ be the approximated extrema of the support of $\mu$, the approximation of $\mathcal G(v)$ obtained from~\eqref{eq:int4} would be %\LY{$\widetilde a, \widetilde b$ not defined.} 
%\LY{superscript comp is quite unusual. can we replace it with something else?}
\begin{equation*}
  \widetilde {\mathcal G}_v := \frac{r^2}{N} \sum_{j=0}^{N-1} \frac{\widetilde g_j' \xi_N^{2j}}{\widetilde g_j - J_{[\widetilde a, \widetilde b]}(v)}.
  \end{equation*}
  We can split the error $|\mathcal G(v) -  \widetilde {\mathcal G}_v|$ into three sources by triangular inequality:
  \begin{align*}
      |\mathcal G(v) - \widetilde {\mathcal G}_v| & \le \left \lvert G(J_{[a,b]}(v)) - G(J_{[\widetilde a,\widetilde b]}(v)) \right \vert + \left \lvert  G(J_{[\widetilde a,\widetilde b]}(v)) - \frac{r^2}{N}\sum_{j=0}^{N-1} \frac{g'(r\xi_N^j) \xi_N^{2j}}{g(r \xi_N^j)-J_{[\widetilde a,\widetilde b]}(v)}\right \rvert \\ &+ \frac{r^2}{N} \sum_{j=0}^{N-1}  \left \lvert \frac{g'(r\xi_N^j) }{g(r \xi_N^j)-J_{[\widetilde a,\widetilde b]}(v)} - \frac{\widetilde g_j'}{\widetilde g_j-J_{[\widetilde a,\widetilde b]}(v)} \right \rvert.
  \end{align*}
  We refrain from outlining a full error analysis of this step, but we will briefly comment on the three terms in the above error bound. The first term can be expected to be small if $[\widetilde a, \widetilde b]$ is accurate. The second term, the error in the trapezoidal quadrature rule, can be bounded by Theorem~\ref{thm:trapezoidal} because the function to be integrated (the Cauchy transform $G$) is analytic in an annulus around $\partial \mathbb{D}$; therefore, this part decreases exponentially fast in the number of the quadrature points $N$. Finally, the third term can be bounded similarly to what is done in Theorem~\ref{thm:errorR} if we know that $\widetilde g_j, \widetilde g_j', \widetilde a, \widetilde b$ have been computed accurately.
	\end{remark}

The last part of the algorithm is about recovering the density of the measure $\mu = \mu_1 \boxplus \mu_2$ from the (approximate) values of the Cauchy transform $G = G_{\mu}$ on equispaced points on the circle of radius $r = r_C$.
 
	\begin{theorem}[Error in the computation of the coefficients of $\mathcal G$]\label{thm:recoverg}
		Let $\mu$ be a measure with sqrt-behavior, at the boundary, with support $[a,b]$ and density $\mathrm{d}\mu(x) = \sqrt{b-x}\sqrt{x-a}\psi(x) \mathrm{d}x$, where $\psi(x)$ is Lipschitz-continuous. Assume that we have a vector $\begin{bmatrix} \widetilde {\mathcal{G}}_0 & \widetilde {\mathcal{G}}_1 & \cdots & \widetilde {\mathcal{G}}_{M-1} \end{bmatrix}^T$ that approximates, up to entrywise error $\varepsilon$, the vector  $\begin{bmatrix} \mathcal G(r \xi_M^0) & \mathcal G(r \xi_M^1) & \cdots & \mathcal G(r \xi_M^{M-1}) \end{bmatrix}^T$ for some $M \in \mathbb{N}$ and some $r<1$, that is, $|\widetilde{\mathcal{G}}_j - \mathcal G(r \xi_M^j)| \le \varepsilon$ for $j=0,1,\ldots,M-1$. Let
%		\begin{scriptsize}
		\begin{equation*}
			\begin{bmatrix}
				\widetilde g_1\\
				\widetilde g_2\\
				\vdots\\
				\widetilde g_M
			\end{bmatrix} := \begin{bmatrix} r^{-1} & & & \\ & r^{-2} & & \\ & & \ddots & \\ & & & r^{-M} \end{bmatrix} F^{-1}\begin{bmatrix} \xi_M^{-0} & & & \\ & \xi_M^{-1} & & \\ & & \ddots & \\ & & & \xi_M^{-M+1} \end{bmatrix} \begin{bmatrix} \widetilde{\mathcal{G}}_0 \\ \widetilde{\mathcal{G}}_1 \\ \vdots \\ \widetilde{\mathcal{G}}_{M-1} \end{bmatrix}.
		\end{equation*}
	Then
	\begin{equation*}
		\left \lvert \begin{bmatrix}
			\widetilde g_1\\
			\widetilde g_2\\
			\vdots\\
			\widetilde g_M
		\end{bmatrix} - \begin{bmatrix}
		g_1\\
		g_2\\
		\vdots\\
		g_M
	\end{bmatrix} \right \rvert \le \begin{bmatrix}
	r^{-1}\\
	r^{-2}\\
	\vdots\\
	r^{-M}
\end{bmatrix} \frac{\varepsilon}{\sqrt M} + \frac{r^M}{1-r^M} \|\psi\|_{[a,b]} \begin{bmatrix} 1\\ 1\\ \vdots \\ 1 \end{bmatrix},
	\end{equation*}
	where the symbol $|\cdot|$ means that the inequality holds entry-wise and $g_1,\ldots, g_M$ are the exact coefficients of the series expansion~\eqref{eq:laurent}.
	\end{theorem}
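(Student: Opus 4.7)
The plan is to decompose $|\tilde g_j - g_j|$ via the triangle inequality into a data-noise contribution and an aliasing (truncation) contribution. Let $\hat g_j$ denote the output of the same recovery map applied to the \emph{exact} vector $(\mathcal G(r\xi_M^0),\ldots,\mathcal G(r\xi_M^{M-1}))^\top$ in place of the noisy vector. Then
\[
|\tilde g_j - g_j| \;\le\; |\tilde g_j - \hat g_j| \;+\; |\hat g_j - g_j|,
\]
and I will bound the two summands separately: the first will produce the $r^{-j}\varepsilon/\sqrt M$ contribution, the second the $\|\psi\|_{[a,b]}\,r^M/(1-r^M)$ one.

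For the noise term, the map from the data vector to $\tilde g$ is linear of the form $\tilde g_j = r^{-j}(F^{-1} D\,\tilde{\mathcal G})_j$, where $D$ is the diagonal matrix with unit-modulus entries. Since the appropriate rescaling of $F$ is unitary, each row of $F^{-1}D$ has Euclidean norm $1/\sqrt M$; applying Cauchy--Schwarz to the error vector $e := \tilde{\mathcal G}-\hat{\mathcal G}$, whose entries have modulus at most $\varepsilon$, yields the entrywise bound $|\tilde g_j - \hat g_j| \le r^{-j}\varepsilon/\sqrt M$, which is the first term of the claim.

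For the aliasing term, I will substitute the exact expansion $\mathcal G(v)=\sum_{j\ge 1} g_j v^j$ into $\mathcal G(r\xi_M^k)$ and then invert as in Step~4. Grouping the resulting double sum by the residue class of $j$ modulo $M$ (using $\xi_M^{jM}=1$) produces, for each $\ell=1,\ldots,M$, the aliasing identity
\[
\hat g_\ell \;=\; g_\ell \;+\; \sum_{n\ge 1} g_{\ell+nM}\, r^{nM}.
\]
To close the argument, I need a uniform bound on $|g_n|$. Remark~\ref{rmk:convergenceGtilde} gives $g_n = \frac{(b-a)\pi}{2}\psi_{n-1}$, where $\psi_n$ are the Chebyshev-second-kind coefficients of $\psi\circ J_{[a,b]}$; a direct integral bound on these coefficients produces $|g_n|\le \|\psi\|_{[a,b]}$, up to an absolute constant absorbed into the statement. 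Summing the resulting geometric series in $r^{nM}$ then gives $|\hat g_\ell - g_\ell|\le \|\psi\|_{[a,b]}\,r^M/(1-r^M)$, which is the second term of the claim.

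The main obstacle I foresee is justifying the series rearrangement in the aliasing step: the regrouping by residue class modulo $M$ requires absolute convergence of $\sum g_j v^j$ on the circle $|v|=r$, which is precisely what the Lipschitz hypothesis on $\psi$ delivers via Remark~\ref{rmk:convergenceGtilde}. Apart from this point and the uniform Chebyshev-coefficient estimate (which is the only other place the sqrt-behavior hypothesis is used), all remaining manipulations are routine linear algebra and geometric summation.
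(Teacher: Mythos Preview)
Your approach is essentially identical to the paper's: both split the error via the triangle inequality into a data-noise term and an aliasing term, derive the aliasing identity $\hat g_\ell = g_\ell + \sum_{n\ge 1} g_{\ell+nM}\,r^{nM}$ by substituting the power series and regrouping modulo $M$, and then invoke the Chebyshev-coefficient connection from Remark~\ref{rmk:convergenceGtilde} to bound $|g_k|$ uniformly by $\|\psi\|_{[a,b]}$. Your explicit mention of absolute convergence to justify the regrouping is a point the paper leaves implicit.

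One caution on the noise term: your Cauchy--Schwarz argument as written does not actually produce the factor $\varepsilon/\sqrt M$. If each row of $F^{-1}D$ has Euclidean norm $1/\sqrt M$ and $e$ has entries of modulus at most $\varepsilon$, then $\|e\|_2 \le \sqrt M\,\varepsilon$, and Cauchy--Schwarz gives only $|(F^{-1}De)_j|\le \varepsilon$, hence $|\tilde g_j - \hat g_j|\le r^{-j}\varepsilon$, not $r^{-j}\varepsilon/\sqrt M$. The paper's proof makes the same leap at this step (it invokes $\|F^{-1}\|_2 = 1/\sqrt M$ and then writes the $\varepsilon/\sqrt M$ bound without accounting for $\|e\|_2$), so you are faithfully reproducing its argument, but be aware that the $1/\sqrt M$ improvement is not justified by the reasoning either of you gives.
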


\begin{proof}
	First of all, we can write 
	\begin{equation*}
		\begin{bmatrix} \mathcal G(r \xi_M^0) \\ \mathcal G(r \xi_M^1) \\ \cdots \\ \mathcal G(r \xi_M^{M-1}) \end{bmatrix} = \begin{bmatrix} \xi_M^{0} & & & \\ & \xi_M^{1} & & \\ & & \ddots & \\ & & & \xi_M^{M-1} \end{bmatrix} \begin{bmatrix} F & F & F & \cdots \end{bmatrix} \begin{bmatrix} g_1 r\\ g_2 r^2\\ g_3 r^3 \\ \vdots \end{bmatrix},
	\end{equation*}
	where we have an infinite matrix and an infinite vector (it is alright to write this because the series converges absolutely). Using triangular inequality (entry-wise), we have 
	
	\resizebox{.8\linewidth}{!}{
		\begin{minipage}{\linewidth}
	\begin{multline*}
		\left \lvert \begin{bmatrix}
			\widetilde g_1\\
			\widetilde g_2\\
			\vdots\\
			\widetilde g_M
		\end{bmatrix} - \begin{bmatrix}
			g_1\\
			g_2\\
			\vdots\\
			g_M
		\end{bmatrix} \right \rvert \le \left \lvert \begin{bmatrix} r^{-1} & & & \\ & r^{-2} & & \\ & & \ddots & \\ & & & r^{-M} \end{bmatrix} F^{-1}\begin{bmatrix} \xi_M^{-0} & & & \\ & \xi_M^{-1} & & \\ & & \ddots & \\ & & & \xi_M^{-M+1} \end{bmatrix} \left (\begin{bmatrix} \widetilde{\mathcal{G}}_0 \\ \widetilde{\mathcal{G}}_1 \\ \vdots \\ \widetilde{\mathcal{G}}_{M-1} \end{bmatrix} - \begin{bmatrix} \mathcal G(r \xi_M^0) \\ \mathcal G(r \xi_M^1) \\ \vdots \\ \mathcal G(r \xi_M^{M-1}) \end{bmatrix} \right ) \right \rvert \\
	+ \left \lvert \begin{bmatrix} r^{-1} & & & \\ & r^{-2} & & \\ & & \ddots & \\ & & & r^{-M} \end{bmatrix} F^{-1} \begin{bmatrix} \xi_N^{-0} & & & \\ & \xi_M^{-1} & & \\ & & \ddots & \\ & & & \xi_M^{-M+1} \end{bmatrix} \begin{bmatrix} \xi_M^{0} & & & \\ & \xi_M^{1} & & \\ & & \ddots & \\ & & & \xi_M^{M-1} \end{bmatrix} \begin{bmatrix} F & F & F & \cdots \end{bmatrix} \begin{bmatrix} g_1 r\\ g_2 r^2\\ g_3 r^3 \\ \vdots \\ \vdots \end{bmatrix} - \begin{bmatrix}
		g_1\\
		g_2\\
		\vdots\\
		g_M
	\end{bmatrix} \right \rvert.
	\end{multline*}
\end{minipage}}

The first term is bounded by
\begin{equation*}
	\left \lvert \begin{bmatrix} r^{-1} & & & \\ & r^{-2} & & \\ & & \ddots & \\ & & & r^{-M} \end{bmatrix} \cdot \left \lVert F^{-1} \begin{bmatrix} \xi_M^{-0} & & & \\ & \xi_M^{-1} & & \\ & & \ddots & \\ & & & \xi_M^{-M+1} \end{bmatrix} \right \rVert_2 \cdot  \begin{bmatrix} \varepsilon \\ \varepsilon \\ \vdots \\ \varepsilon \end{bmatrix} \right \rvert  \le \begin{bmatrix}
		r^{-1}\\
		r^{-2}\\
		\vdots\\
		r^{-M}
	\end{bmatrix} \frac{\varepsilon}{\sqrt N}
\end{equation*}
because $F^{-1}$ has norm $1/\sqrt{M}$ and all the $\xi_M^{-j}$ have modulus 1. The second term simplifies to
\begin{equation*}
	\left \lvert \begin{bmatrix} 
		g_{M+1} r^M + g_{2M+1}r^{2M} + g_{3M+1}r^{3M} + \ldots\\
		g_{M+2} r^M + g_{2M+2}r^{2M} + g_{3M+2}r^{3M} + \ldots\\
		\vdots\\
		g_{2M} r^M + g_{3M}r^{2M} + g_{4M}r^{3M} + \ldots\\
	\end{bmatrix} \right \rvert \le \frac{r^M}{1-r^M} \left ( \max_{k \ge M+1} |g_k| \right ) \begin{bmatrix} 1\\ 1\\ \vdots \\ 1 \end{bmatrix}.
\end{equation*}
Due to the connections with expansions of $\mathcal G$ in the Chebyshev series (see Remark~\ref{rmk:convergenceGtilde}), all coefficients $g_k$ have modulus bounded by $\|\psi\|_{[a,b]}$, which concludes the proof of the theorem.
\end{proof}

Theorem~\ref{thm:recoverg} tells us that whenever $\varepsilon / r^j$ is large, we cannot expect the approximation of $g_j$ to be reliable. This is why we are only keeping the first few approximations of the series coefficients. We also need to ensure that $r^M = r_C^M$ is small so that the second error term is small (but this is less problematic): for this reason, in our code, we set $M = \max\left \{ 100, \left \lvert 16 \log_{1/r_C} 10 \right \rvert \right \}$.

\begin{remark}[Error in the recovery of the density of $\mu_1 \boxplus \mu_2$]
	Let us denote by \\$\begin{bmatrix}
				\widetilde f_0 &
				%\widetilde f_1 &
				\cdots &
				\widetilde f_{m-1}
			\end{bmatrix}^T$ the approximation obtained from~\eqref{eq:recovery} by plugging in the vector $\begin{bmatrix}
				\widetilde g_1 &
				%\widetilde g_2 &
				\cdots &
				\widetilde g_m
			\end{bmatrix}^T$. Using the triangular inequality and the fact that $F$ is unitary up to a $\sqrt{m}$ scaling factor, we get that
 \begin{multline*}
      \left \lvert \begin{bmatrix}
			f\left (J_{[a,b]}(\xi_m^0) \right )\\
		f\left (J_{[a,b]}(\xi_m^1) \right )\\
			\vdots\\
			f\left (J_{[a,b]}(\xi_m^{m-1}) \right )
		\end{bmatrix} -  \begin{bmatrix} \widetilde f_0\\ \widetilde f_1\\ \vdots\\ \widetilde f_{m-1} \end{bmatrix} \right \rvert  \le \frac{\sqrt{m}}{\pi} \left \lvert \begin{bmatrix} g_1\\g_2\\\vdots \\g_m \end{bmatrix} - \begin{bmatrix}
				\widetilde g_1\\
				\widetilde g_2\\
				\vdots\\
				\widetilde g_m
			\end{bmatrix}\right \rvert \\ + \left \vert \begin{bmatrix}
			f\left (J_{[a,b]}(\xi_m^0) \right )\\
		f\left (J_{[a,b]}(\xi_m^1) \right )\\
			\vdots\\
			f\left (J_{[a,b]}(\xi_m^{m-1}) \right )
		\end{bmatrix} + \frac{1}{\pi}\mathrm{Im} \left (\begin{bmatrix}
			\xi_m^0 & & & \\
			& \xi_m^1 & & \\
			& & \ddots & \\
			& & & \xi_m^{m-1}	
		\end{bmatrix} \cdot F \cdot \begin{bmatrix} g_1\\g_2\\ \vdots \\ g_m \end{bmatrix}  \right ) \right \vert.
 \end{multline*}
 The first error term can be bounded with Theorem~\ref{thm:recoverg}. The second error term depends on how fast the series~\eqref{eq:laurent} converges to $\mathcal G_{\mu}(v)$, which, in turn, depends on the regularity of the density function associated with $\mu$ (see Remark~\ref{rmk:convergenceGtilde}).
\end{remark}

%====================================================================	
\section{Numerical experiments}\label{sec:experiments}

\subsection{Exponential convergence of trapezoidal quadrature rule for the pointwise evaluation of the Cauchy transform}

\begin{figure}[htb]
		\centering
		\includegraphics[width=\textwidth]{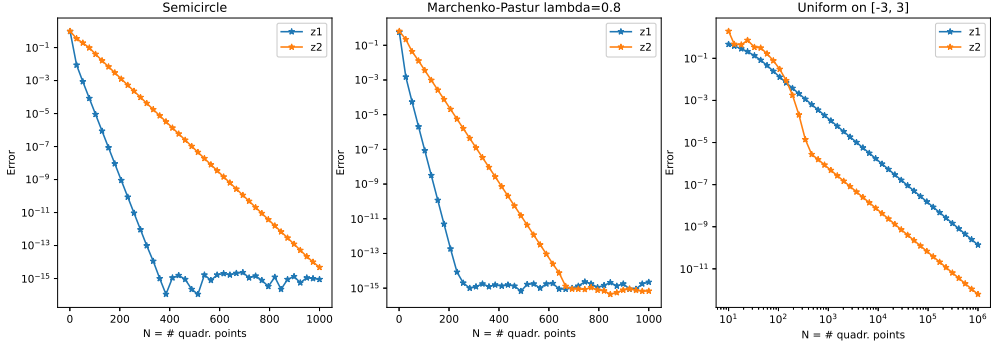}
		\caption{Evaluation of the Cauchy transform of the points $z_1 = 1.001 \cdot b$ and $z_2 = (0.8 + 0.01i)\cdot b$ for the semicircle distribution (left), the Marchenko-Pastur with parameter $0.8$ (middle), and the uniform distribution on $[-3, 3]$ (right). The x-axis is the number of quadrature points, and the y-axis is the error of the trapezoidal quadrature rule.}
		\label{fig:quadrature}
\end{figure}

First of all, we consider the evaluation of the Cauchy transform in some points outside the support for the semicircle, uniform (with $m=3$), and Marchenko-Pastur (with $\lambda = 0.7$) distributions. The results are reported in Figure~\ref{fig:quadrature}. For each measure, we consider the point $z_1 = 1.001 \cdot b$ (blue) and  $z_2 = (0.8 + 0.01i)\cdot b$ (orange). The x-axis is the number of quadrature points, and the y-axis is the error of the trapezoidal quadrature rule. As expected, the error decreases exponentially for the semicircle and Marchenko-Pastur. The same cannot be said for the uniform distribution (note that the graph is log-log in this case).

\subsection{Recovery of measures from the Cauchy transform}
\begin{figure}[htb!]
		\centering
		\includegraphics[width=0.8\textwidth]{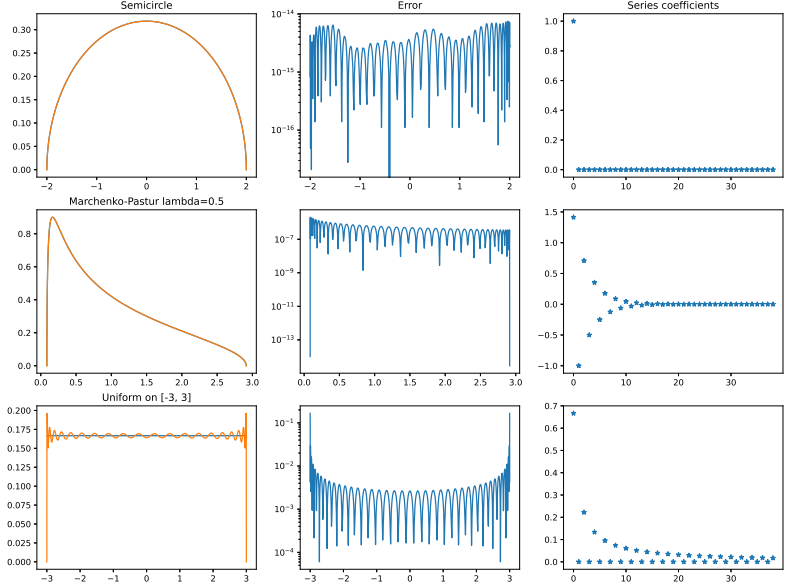}
		\caption{Recovering the measure from $40$ Fourier coefficients of the Cauchy transform. From top to bottom, we consider the semicircle distribution, the Marchenko-Pastur law with parameter $\lambda=0.5$, and the uniform distribution on the interval $[-3,3]$. In the left block column, the blue curve is the density function, and the orange curve is the reconstruction of the measure. In the middle block column, we plot the pointwise error $|f(x) - \mathcal f(x)|$. The right block column illustrates the magnitude of the approximate coefficients $g_1, g_2, \ldots, g_{10}$.}
		\label{fig:m40}
\end{figure}
	
We test numerically the recovery of the spectral measure $\mu$ from the knowledge of the (exact) $\mathcal G(z)$ on a circle of radius $R = 0.9$ in $N = 3000$ points coming from equispaced points on a circle, using $m = 40$ coefficients in the power series~\eqref{eq:laurent} for the Cauchy transform. The results are shown in Figure~\ref{fig:m40}. As expected, we can recover the measures with sqrt-behavior at the boundary very well.

% 	\begin{figure}[htb]
% 		\centering
% 		\includegraphics[width=\textwidth]{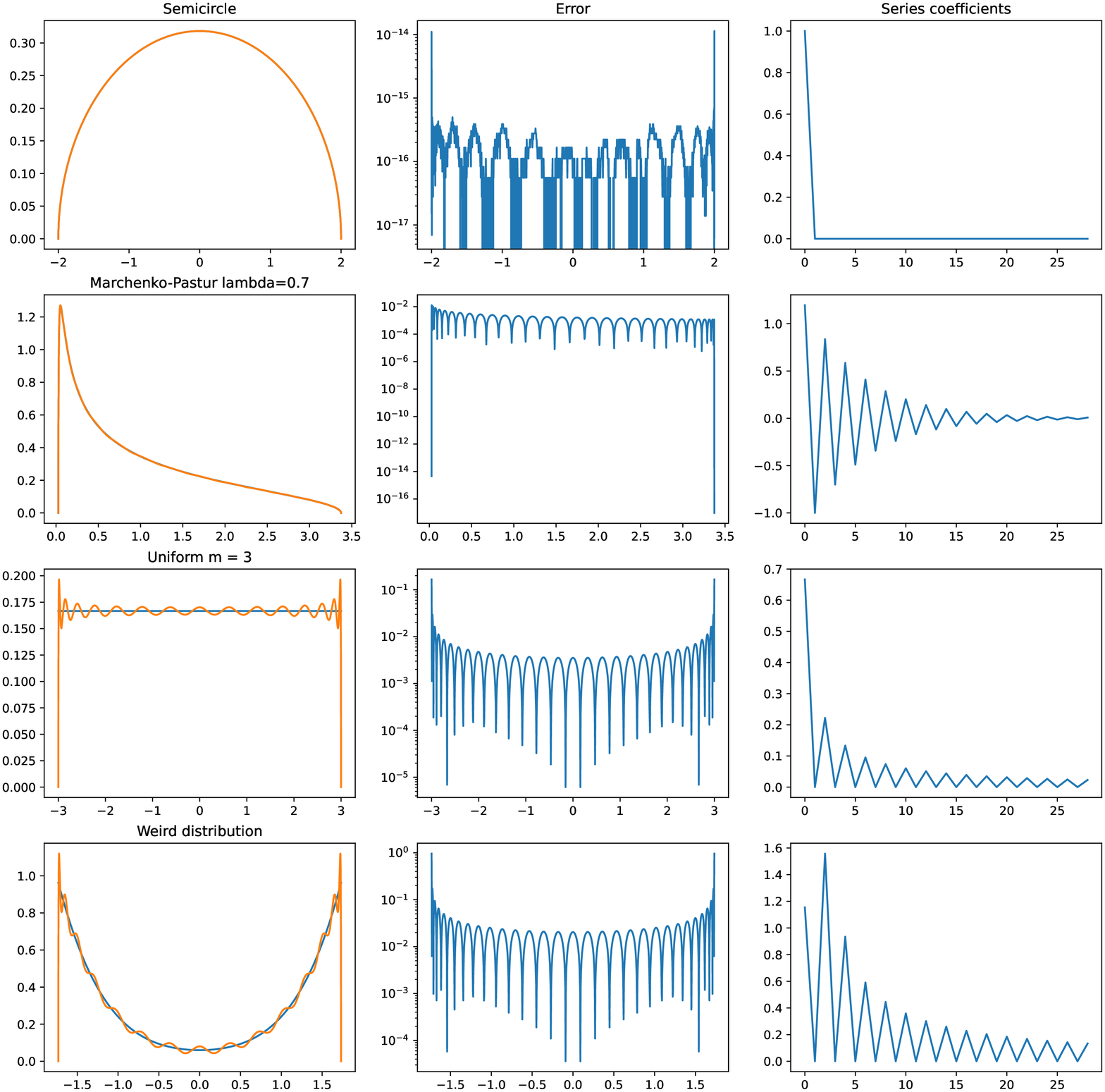}
% 		\caption{Recovering the measure from $30$ Fourier coefficients of the Cauchy transform.}
% 		\label{fig:m30}
% 	\end{figure}

% \begin{figure}[htb]
% 	\centering
% 	\includegraphics[width=\textwidth]{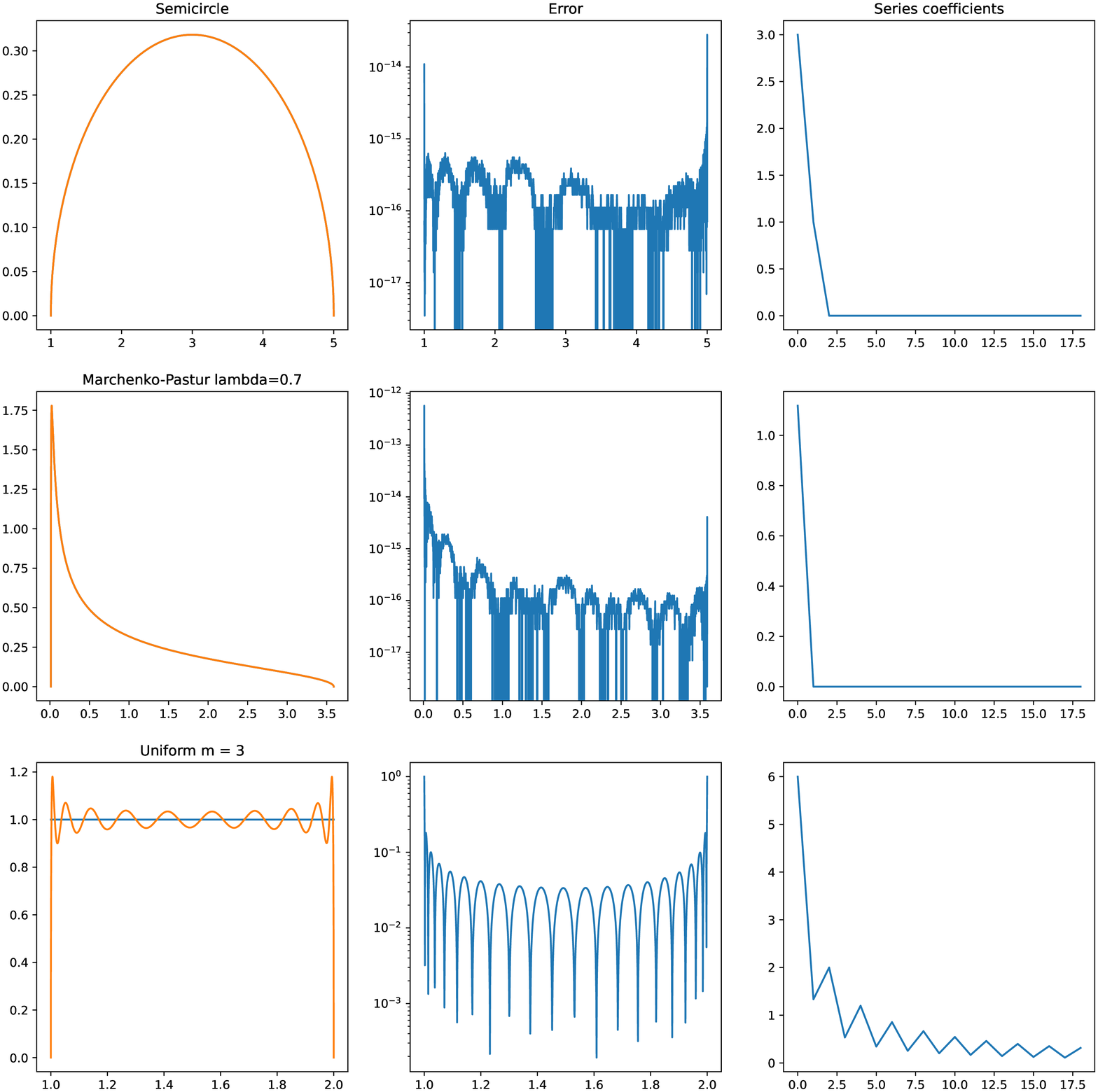}
% 	\caption{Recovering the measure from $20$ Fourier coefficients of the T-transform. Here we consider a schifted semicircle distribution (with support on $[1, 5]$), the Marchenko-Pastur distribution with $\lambda=0.7$, and the uniform distribution on the interval $[1,2]$. As before, in the first block column we plot the density (blue) and the approximate reconstruction of the density (orange). In the second block column we have the pointwise error and in the third column we have the magnitude of the computed coefficients of the power series of $\mathcal T(z)$. Note that, in the case of the Marchenko-Pastur law, we have $\mathcal T(z) = \frac{z}{\lambda}$.}
% 	\label{fig:m20times}
% \end{figure}
	
% 	\FloatBarrier
	
\subsection{Free additive and multiplicative convolution of distributions}

In this section, we test Algorithm~\ref{alg:sum} on various input measures. In some cases, we know the analytic form of the support of the free sum or the analytic form of the free sum itself. This allows us to test the accuracy of our algorithm on some examples. In the other cases, we will compare the approximate density returned by our algorithm with the empirical eigenvalue distribution of the sum of two random matrices that are chosen according to the input distributions.

\paragraph{How to read the plots.} Let us explain what is plotted in each figure. The first (top) block row is about the measure $\mu_1$, the second block row is about $\mu_2$, and the third block row is about the free convolution $\mu_1 \boxplus \mu_2$. We pass from the first (left) column to the second (middle) column with the Joukowski transform, and from the second column to the third (right) column with the Cauchy/T-transform. The \textcolor{red}{red} circle in the left column is always the unit circle. The \textcolor{red}{red} line in the second column indicates the support of the corresponding measure. The \textcolor{blue}{blue} curve in the (1,3) block (the top right block) is $\Gamma_1$ and the \textcolor{blue}{blue} curve in the (2,3) block (the top middle block) is $\Gamma_2$. These same curves are reported in the (4,3) block, together with an \textcolor{orange}{orange} circle that is in the intersection of the two curves and inside the image of $\mathbb{C} \cup \{\infty\} \backslash[a,b]$. The same \textcolor{orange}{orange} circle is drawn in the (1,3), (2,3), and (3,3) blocks, and the inverses via $G_1, G_2, G$ are in the (1,2), (2,2), (3,2) blocks, respectively. Finally, the circle $\mathcal{A}$ is represented in \textcolor{green}{green} in the (3,1) block, and its image via $J$ and then $G$/$T$ is also in \textcolor{green}{green} in the (3,2) and (3,3) blocks. In the (4,2) block, we plot the densities of the measures $\mu_1$, $\mu_2$, and $\mu$. In the (4,1) block, we (usually) plot the eigenvalue distribution of one instance of a random matrix of size $5000 \times 5000$ obtained as a sum/product of matrices with limit eigenvalue distributions $\mu_1$ and $\mu_2$.

 \paragraph{Implementation details.}
	Unless indicated otherwise, we set the following parameters for Algorithm~\ref{alg:sum}: 
	\begin{itemize}
 \item For computing the Cauchy transform in a point, we use $400$ quadrature points for measures with sqrt-behavior at the boundary (semicircle, Marchenko-Pastur) and $4000$ points for other measures.
 \item We use $400$ quadrature points to discretize the integrals corresponding to the Cauchy integral formulae for computing the R-transform of $\mu_1$ and $\mu_2$ as well as the Cauchy transform of $\mu$.
 \item The number of points on the circle of radius $r_C$ is chosen as $\max\left \{100, \left \lfloor16 \log_{1/r_C}10 \right \rfloor\right \}$ (this is to ensure that the second term in the error of Theorem~\ref{thm:recoverg} is negligible). Then we truncate the series to the first $m = 20$ terms.
 \item We choose a parameter $\varepsilon = 0.05$ that quantifies how ``far from the border'' are the curves and intervals we take. More specifically:
 \begin{itemize}
 \item we set $r_A = 1 - \varepsilon$; 
 \item we use this value of $\varepsilon$ in lines~\ref{line:support1}--\ref{line:support2} of Algorithm~\ref{alg:sum}; 

 \item when looking for the radius of the largest circle inside 
 \begin{equation*}
 G_{\mu_1}(\mathbb{C} \cup \{\infty\} \backslash [a_1, b_1]) \cap G_{\mu_2}(\mathbb{C} \cup \{\infty\} \backslash [a_2, b_2]) \cap G_{\mu}(\mathbb{C} \cup \{\infty\} \backslash [a, b])
 \end{equation*}
 we start looking at the circle of radius $(1-\varepsilon)r$, where $r$ is the radius of the largest circle inscribed in $G_{\mu_1}(\mathbb{C} \cup \{\infty\} \backslash [a_1, b_1]) \cap G_{\mu_2}(\mathbb{C} \cup \{\infty\} \backslash [a_2, b_2])$;
 \item when choosing the radius $r_C$ we multiply by $(1-\varepsilon)$ the radius of the largest circle inscribed in $J_{[a,b]}^{-}(\Gamma)$.
 \end{itemize}
 \item The tolerance for stopping the binary search for the computation of the support of $\mu$ is set to $10^{-12}$. The tolerance for stopping the binary search for the computation of $r_B$ is $10^{-3}$.
	\end{itemize}
For Algorithm~\ref{alg:sum}, we use the same values of the parameters in the corresponding parts of the code.

\paragraph{Code availablity.}
A Python implementation of Algorithms~\ref{alg:sum} and~\ref{alg:multiplication} is available at \url{https://github.com/Alice94/FreeConvolutionCode}, together with scripts to reproduce all the numerical examples in this paper.

\begin{example}\label{ex:semicircle}
In Figure~\ref{fig:semicircle+semicircle}, we compute the free additive convolution of two semicircle distributions. The result is known analytically, and it is a rescaled semicircle distribution, with support in $[-2\sqrt{2}, 2\sqrt{2}]$ and density function $f(x) = \frac{1}{4\pi}\sqrt{8-x^2}$. Our algorithm is able to compute the free additive convolution with an accuracy of $10^{-14}$. We only used $m=10$ coefficients of the series~\eqref{eq:laurent} in this example.
		\end{example}
	
	\begin{figure}[htb]
		\centering
		\includegraphics[width=0.7\textwidth]{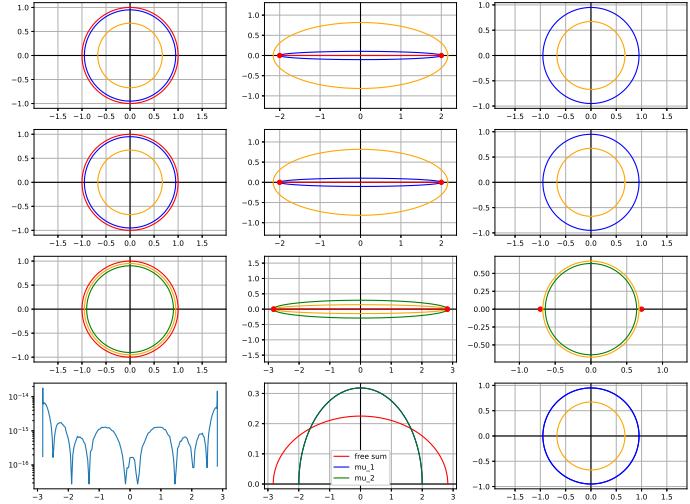}
		\caption{Free sum of semicircle and semicircle. Only 400 quadrature points. The (4, 2) subplot is the error of the computed free sum. See Example~\ref{ex:semicircle} for a precise description.}
		\label{fig:semicircle+semicircle}
	\end{figure}
	
%	\FloatBarrier
	
	\begin{example}\label{ex:freesumstepsemicircle}
		In Figure~\ref{fig:semiunif4}, we compute the free additive convolution of the semicircle distribution with the uniform distribution on the interval $[-4, 4]$. In this case, we do not know the analytic form of the solution, but we can compute the support of the free sum, which is
		\begin{equation*}
			\left [ -\frac{1}{4}\log \left ( 4 + \sqrt{17}\right ) - \sqrt{17}, \frac{1}{4}\log \left ( 4 + \sqrt{17}\right ) + \sqrt{17} \right ].
		\end{equation*}
	The supports that we computed numerically agree with the theoretical one up to an absolute error of $3.6 \cdot 10^{-6}$. The uniform distribution does not have an sqrt-behavior at the boundary; hence the trapezoidal quadrature rule for computing the Cauchy transform converges more slowly. For this reason, we use $4000$ quadrature points for this example. Note that we could have used an explicit expression of the Cauchy transform because it is known analytically, but we wanted to see how our algorithm performed without this knowledge.
	\end{example}

	% \begin{figure}[htb]
	% 	\centering
	% 	\includegraphics[width=0.7\textwidth]{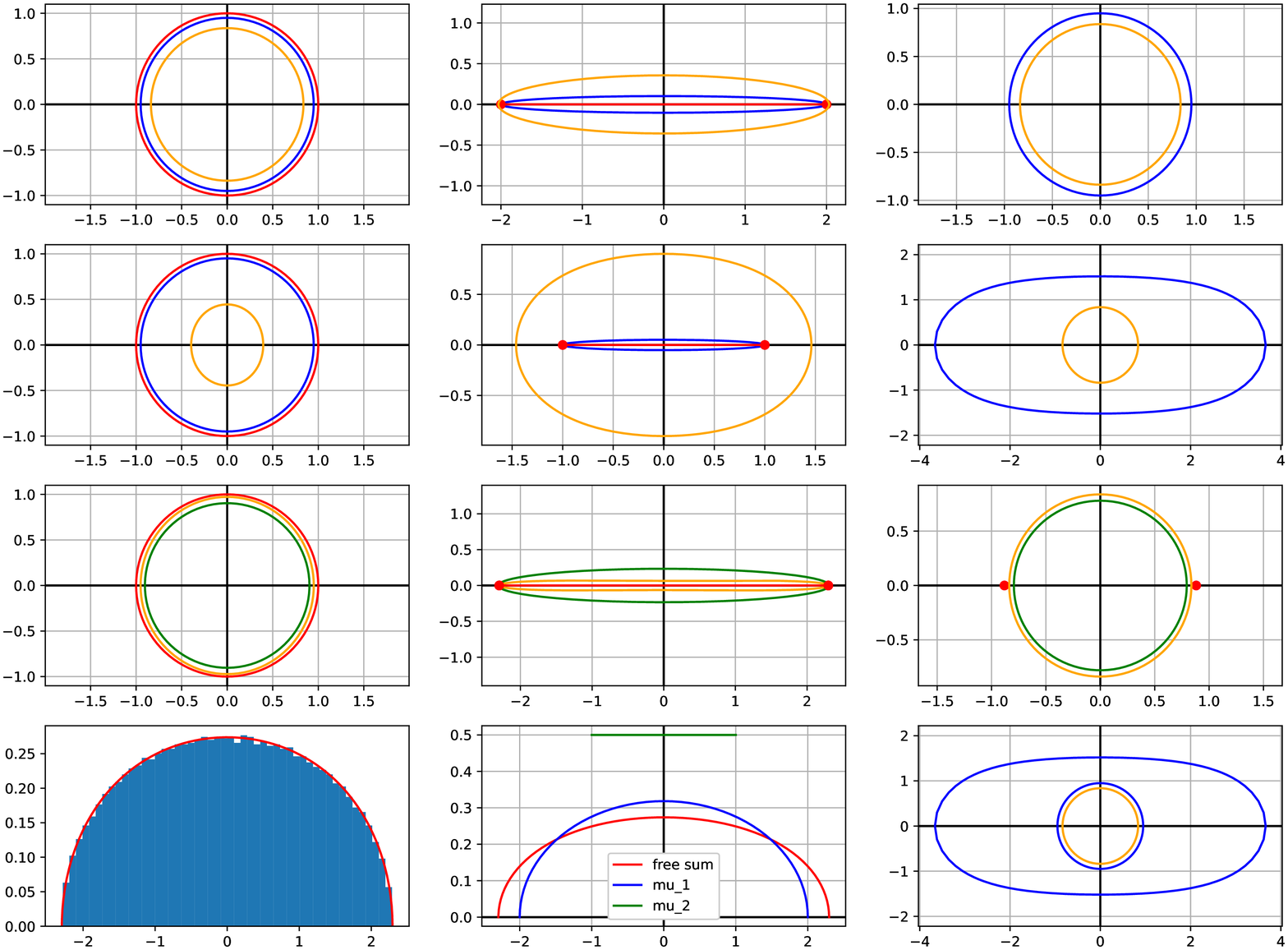}
	% 	\caption{Free sum of semicircle and uniform on $[-1,1]$. See Example~\ref{ex:freesumstepsemicircle}.}
	% 	\label{fig:semiunif1}
	% \end{figure}
	\begin{figure}[htb!]
		\centering
		\includegraphics[width=0.7\textwidth]{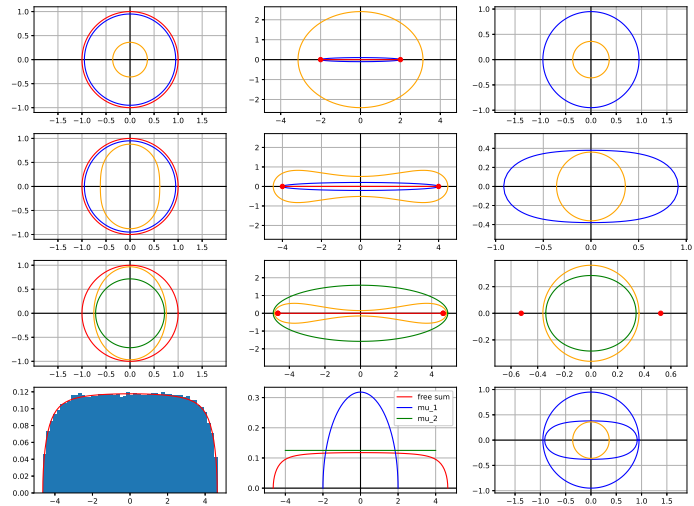}
		\caption{Free sum of semicircle and uniform on $[-4,4]$. See Example~\ref{ex:freesumstepsemicircle}.}
		\label{fig:semiunif4}
	\end{figure}

%\FloatBarrier

%\clearpage
% \begin{example}\label{ex:semicircleMP}
% 	In Figure~\ref{fig:mp1} we consider the free convolution of the semicircle distribution with the Marchenko-Pastur distribution with $\lambda=0.5$.
% \end{example}

% \begin{figure}[htb]
% 	\centering
% 	\includegraphics[width=0.7\textwidth]{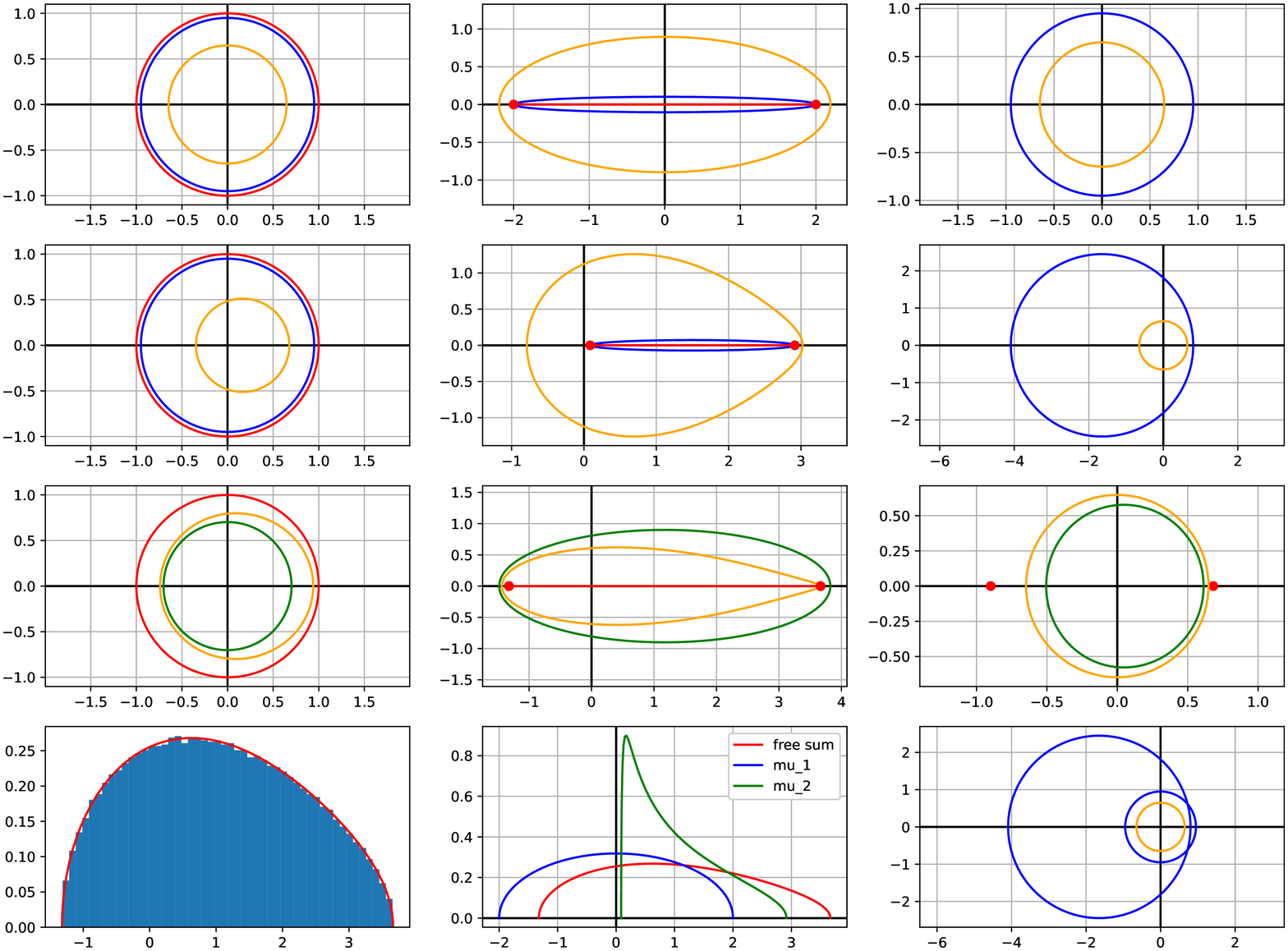}
% 	\caption{Free sum of semicircle and Marchenko-Pastur ($\lambda=0.5$). See Example~\ref{ex:semicircleMP}.}
% 	\label{fig:mp1}
% \end{figure}

%\FloatBarrier
%
%\clearpage
\begin{example}\label{ex:unifMP}
	In Figure~\ref{fig:mp2}, we consider the free additive convolution of the uniform distribution on the interval $[-2, 2]$ and the Marchenko-Pastur distribution with $\lambda=0.7$.
\end{example}
	\begin{figure}[htb!]
		\centering
		\includegraphics[width=0.7\textwidth]{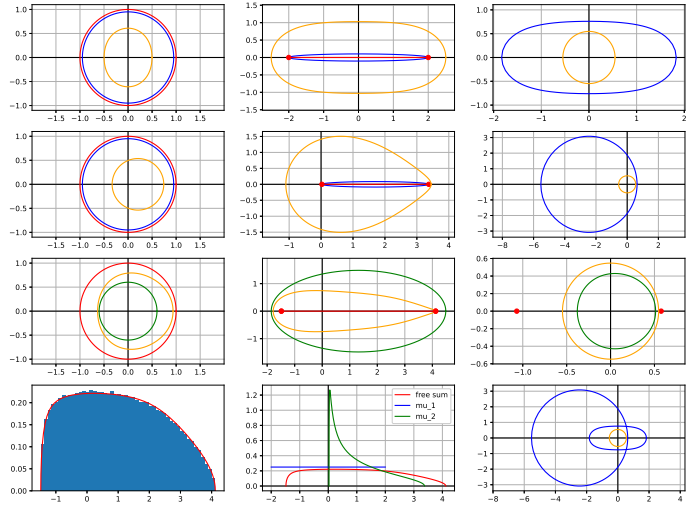}
		\caption{Free sum of uniform on $[-2,2]$ and Marchenko-Pastur ($\lambda=0.7$). See Example~\ref{ex:unifMP}.}
		\label{fig:mp2}
	\end{figure}

%\FloatBarrier

%\clearpage

%\FloatBarrier

%\clearpage

\begin{example}\label{ex:semicirclexMP}
	In Figure~\ref{fig:semicircleMP}, we consider the free multiplicative convolution of a shifted semicircle distribution (with support in $[1, 5]$ instead of $[-2, 2]$) and a Marchenko-Pastur distribution with $\lambda = 0.2$.
\end{example}
\begin{figure}[htb!]
	\centering
	\includegraphics[width=0.7\textwidth]{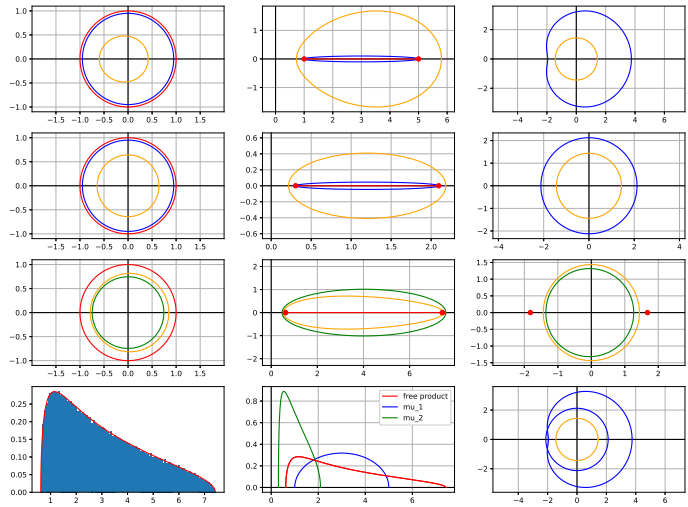}
	\caption{Free multiplication of shifted semicircle and Marchenko-Pastur distribution. See Example~\ref{ex:semicirclexMP}.}
	\label{fig:semicircleMP}
\end{figure}

\begin{example}\label{ex:sxs}
	In Figure~\ref{fig:sxs}, we consider the free multiplicative convolution of two identical shifted semicircle distributions.
\end{example}
\begin{figure}[htb!]
	\centering
	\includegraphics[width=0.7\textwidth]{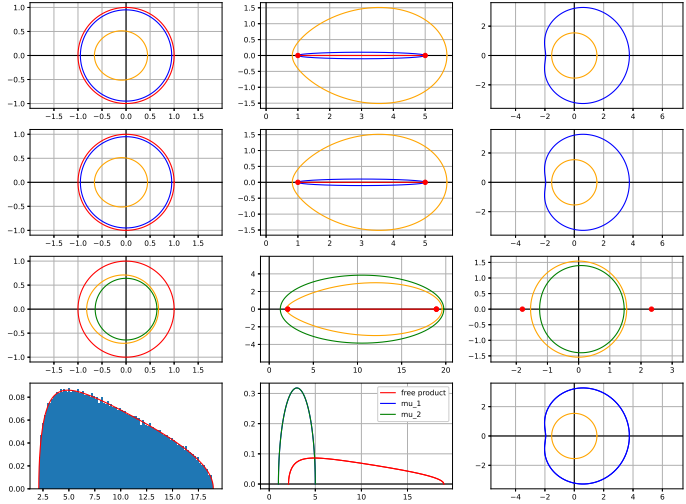}
	\caption{Free multiplication of two shifted semicircle distributions. See Example~\ref{ex:sxs}.}
	\label{fig:sxs}
\end{figure}

% \begin{example}\label{ex:unifxunif}
% 	In Figure~\ref{fig:unifxunif} we consider the free multiplicative convolution of a uniform distribution on the interval $[1, 2]$ and a uniform distribution on the interval $[1, 3]$. We use $4000$ quadrature points for this example.
% \end{example}
% \begin{figure}[htb]
% 	\centering
% 	\includegraphics[width=0.7\textwidth]{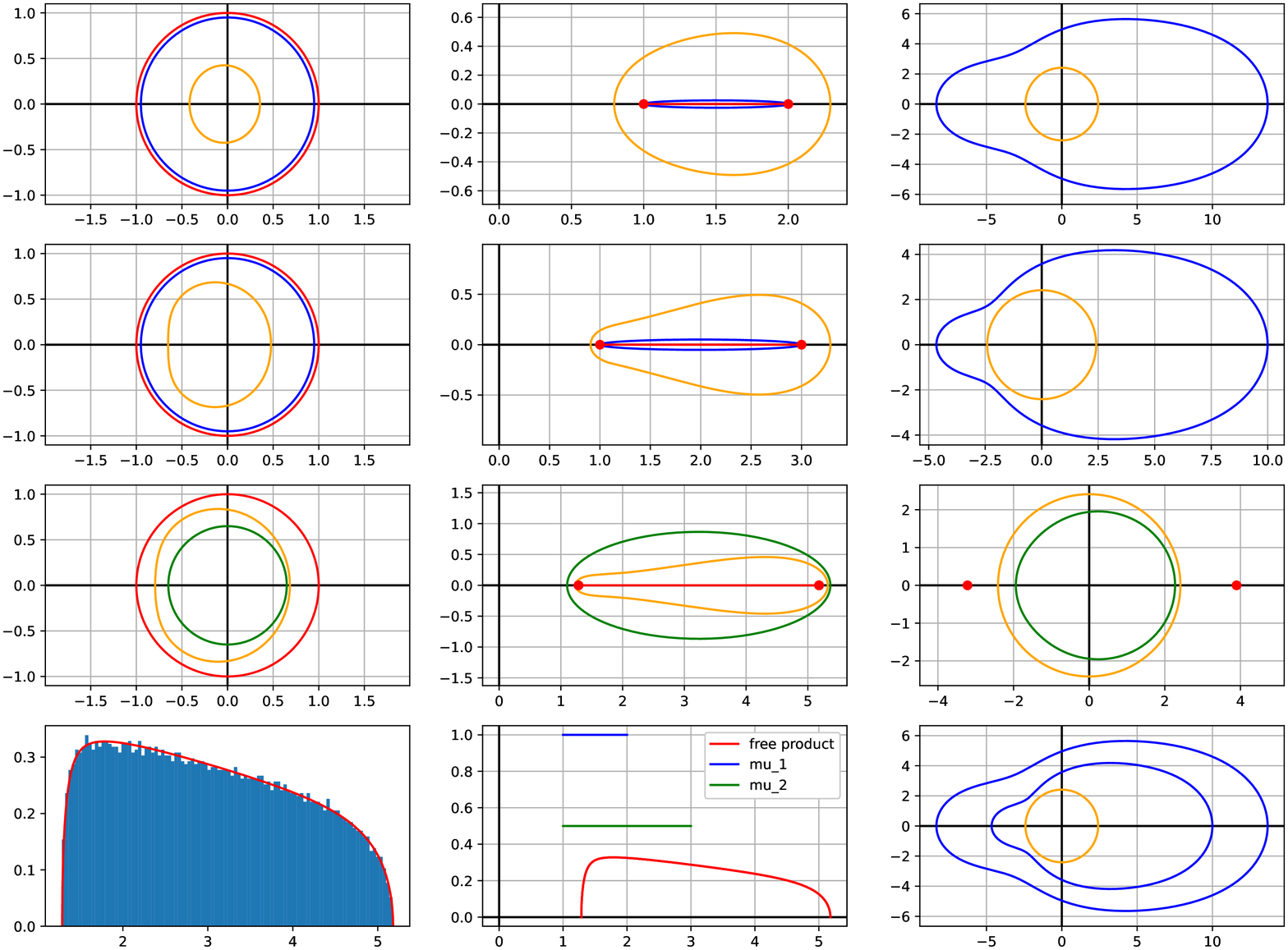}
% 	\caption{Free multiplication of a uniform distribution on the interval $[1, 2]$ and a uniform distribution on the interval $[1, 3]$. See Example~\ref{ex:unifxunif}.}
% 	\label{fig:unifxunif}
% \end{figure}

 % \FloatBarrier
\subsection{Strengths and limitations of our algorithms}

The numerical examples above show that Algorithms~\ref{alg:sum} and~\ref{alg:multiplication} give a good approximation of the free convolution of measures in a variety of situations. In the following four examples, we show that our algorithm can give good insights even in cases in which the input measure does not satisfy the assumptions of Theorem~\ref{thm:support}. 

\begin{example}\label{ex:unif}
	In Figure~\ref{fig:unif}, we consider the free additive convolution of a uniform distribution on the interval $[-1, 1]$ and a uniform distribution on the interval $[-2, 2]$. This combination does not fall into the theory for our algorithm, but their free additive convolution has an sqrt-behavior at the boundary (see, e.g.,~\cite{Bao2020}). 
\end{example}

\begin{figure}[htb!]
		\centering
		\includegraphics[width=0.7\textwidth]{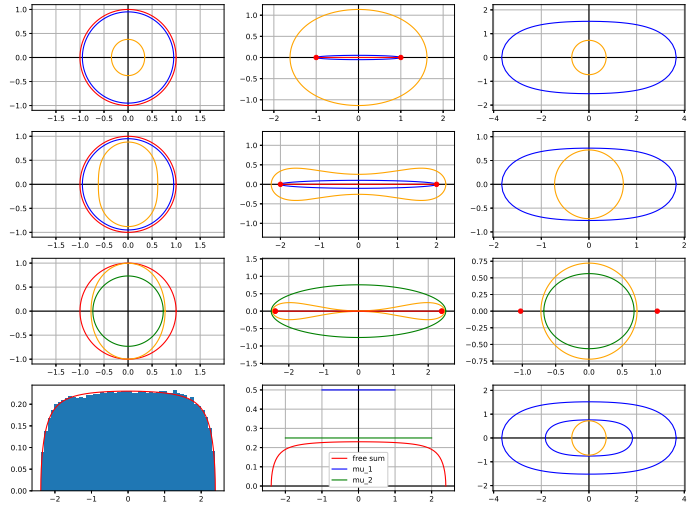}
		\caption{Free sum of uniform distributions on $[-1,1]$ and $[-2,2]$. See Example~\ref{ex:unif}.}
		\label{fig:unif}
\end{figure}

\begin{example}\label{ex:MPxunif}
	In Figure~\ref{fig:MPxunif} we consider the free multiplicative convolution of the Marchenko-Pastur distribution with $\lambda = 0.1$ and the uniform distribution on the interval $[1, 3]$.
\end{example}

\begin{figure}[htb!]
	\centering
	\includegraphics[width=0.7\textwidth]{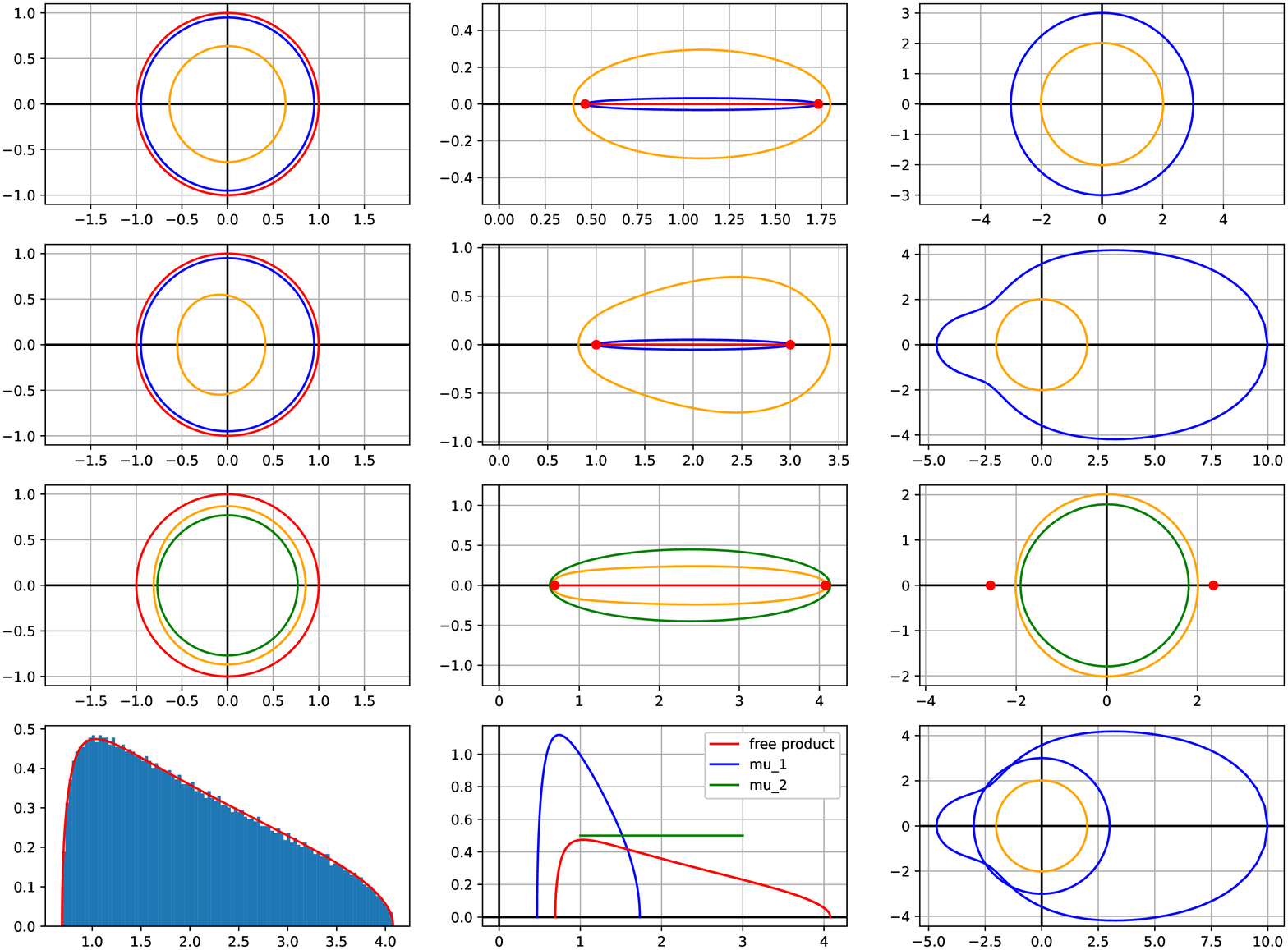}
	\caption{Free multiplication of Marchenko-Pastur and uniform distribution. See Example~\ref{ex:MPxunif}.}
	\label{fig:MPxunif}
\end{figure}

\begin{example}\label{ex:weirdexp}
	In Figure~\ref{fig:weird}, we consider the free convolution of the semicircle distribution with the distribution $\mu_2$ that has support in $[-\sqrt 3, \sqrt 3]$ and has a density
	\begin{equation*}
		\mathrm{d}\mu_2(x) = \frac{5\sqrt 3}{144} (x^2+1)^2.
	\end{equation*}
	This has the property that $G_{\mu_2}'(i) = G_{\mu_2}'(-i) = 0$ (and it does not have sqrt-behavior at the boundary), therefore its Cauchy transform is not invertible. The image via $\mathcal G_{\mu_2}$ of the blue circle of radius $r_A$ is a curve with two self-intersections. This is not a problem for the application of the Cauchy integral formula, as long as we consider a point that is inside the curve. The disadvantage of this example is that we need $r_B$ -- the radius of the orange circle -- to be quite small, and this means that we need to truncate the power series corresponding to $\mathcal G_{\mu_1 \boxplus \mu_2}$ to the first few terms. The numerical approximation that we get closely agrees with the histogram of the eigenvalues of the sum of a symmetric random matrix and a random matrix whose eigenvalues are samples from the weird distribution. We used $4000$ quadrature points for the discretization of the integral of the Cauchy transform of $\mu$ in this example.
\end{example}

\begin{figure}[htb!]
		\centering
		\includegraphics[width=0.7\textwidth]{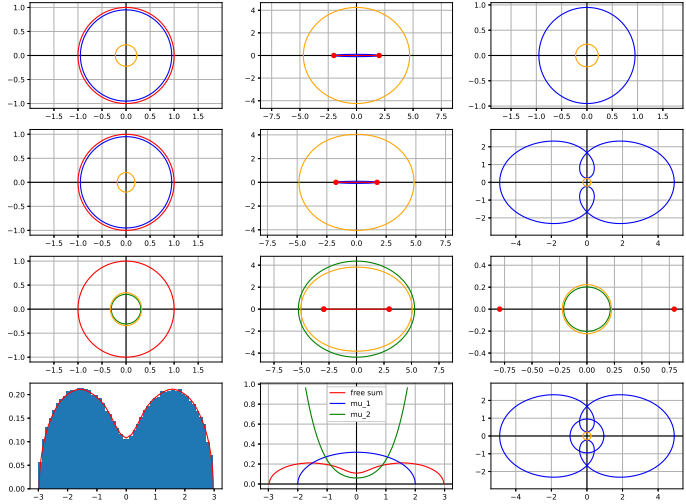}
		\caption{Free sum of semicircle law and the distribution from Example~\ref{ex:weirdexp}, with $m = 20$.}
		\label{fig:weird}
	\end{figure}

Although the theory does not extend to this case, we can also consider measures that have atoms. For a point measure, it is easy to explicitly write its Cauchy transform (which is not invertible when there are more than two atoms). In our code, we use an explicit expression for the part of the Cauchy transform corresponding to the atoms, and quadrature for the rest. When considering the free convolution $\mu$ of a measure with sqrt-behavior at the boundary with a discrete measure, it can happen that the support of $\mu$ splits into two or more distinct pieces; when this is \emph{not} the case, our algorithm can successfully approximate the density of $\mu$, as in the next example.

\begin{example}\label{ex:discrete}
	In Figure~\ref{fig:semicirclexdiscrete}, we consider the free multiplicative convolution of a shifted semicircle distribution with a discrete measure that has masses $\frac{1}{7}$ in the points $1, \frac{3}{2}, 2, \frac{5}{2}, 3, \frac{7}{2}, 4$. In this case, the T-transform is easy to compute directly, so we do not use the trapezoidal quadrature rule for this part of the algorithm.
\end{example}
\begin{figure}[htb!]
	\centering
	\includegraphics[width=0.7\textwidth]{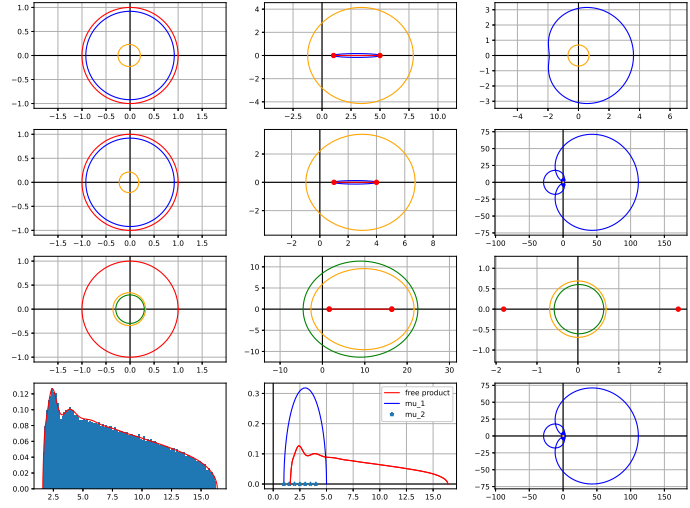}
	\caption{Free multiplication of shifted semicircle distribution with a discrete distribution. See Example~\ref{ex:discrete}.}
	\label{fig:semicirclexdiscrete}
\end{figure}

We comment on a warning on the choice of the parameters ($r_A$ and $\varepsilon$) in our algorithms. On the one hand, when $r_A$ is (relatively) far from $1$ and $\varepsilon$ is (relatively) large, the trapezoidal rules for the computation of $g(z)$ on the two intervals  $\left (\max\{\mathcal G_{\mu_1}(-r_A+\varepsilon), \mathcal G_{\mu_2}(-r_A+\varepsilon)\}, 0\right )$ and $\left (0, \min\{\mathcal G_{\mu_1}(r_A-\varepsilon), \mathcal G_{\mu_2}(r_A-\varepsilon)\}\right )$ converge very fast, allowing for a small value of quadrature points $N$. On the other hand, Theorem~\ref{thm:support} ensures the existence of a zero of $g'$ in the \emph{larger} intervals $(\max(G_{\mu_1}(a_1), G_{\mu_2}(a_2)), 0)$ and $(0, \min(G_{\mu_1}(b_1), G_{\mu_2}(b_2)))$. Therefore, an easy choice of $r_A$ and $\varepsilon$ may mean that we cannot successfully find the support of $\mu$. In our implementation, we give a warning when this happens, and then proceed to choose $\xi_a = \max\{\mathcal G_{\mu_1}(-r_A+\varepsilon), \mathcal G_{\mu_2}(-r_A+\varepsilon)\}$ or $\xi_b = \min\{\mathcal G_{\mu_1}(r_A-\varepsilon), \mathcal G_{\mu_2}(r_A-\varepsilon)\}$, which still gives visually good results, see the next example.

\begin{example}\label{ex:longunif}
    We consider the free additive convolution of a semicircle distribution with the uniform distribution on the interval $[-10, 10]$. We can compute the support of the result analytically and it is $[-10.3497, 10.3497]$. The choice of $\varepsilon = 0.05$ is not small enough to allow us to find the zeros of the derivative of $g(z)$ as defined in~\eqref{eq:inverse}. Choosing a smaller value $\varepsilon = 0.02$ results in an accurate approximation; see Figure~\ref{fig:smallepsilon}.
\end{example}
\begin{figure}[htb!]
	\centering
	\includegraphics[width=0.7\textwidth]{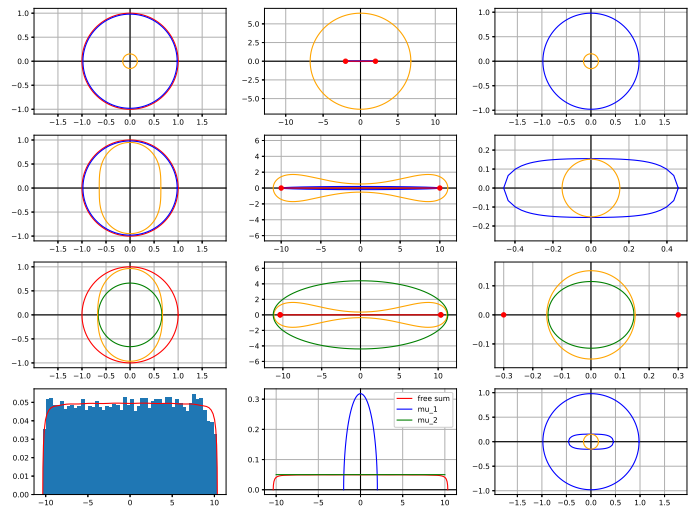}
	\caption{Free addition of semicircle distribution with uniform distribution on $[-20, 20]$. See Example~\ref{ex:longunif}.}
	\label{fig:smallepsilon}
\end{figure}

%=============================================================================================
\section{Conclusions}\label{sec:conclusions}
In this paper, we have proposed an algorithm that allows us to approximately compute the density of a measure with an sqrt-behavior at the boundary that results from the free (additive or multiplicative) convolution of two measures with compact support. Our methods use numerical quadrature, and the accuracy improves quickly when increasing the number of quadrature points.

We mention three possible directions in which our work could be extended. First, one could find more general assumptions under which the strategy of Theorem~\ref{thm:support} allows us to compute the support of $\mu$, or prove Theorem~\ref{thm:supporttimes} under the less restrictive assumptions of Theorem~\ref{thm:support}. Indeed, the numerical experiments seem to indicate that Algorithm~\ref{alg:multiplication} also works under the more general assumption that one of the input measures has sqrt-behavior at the boundary and the other one is a more general Jacobi measure. Second, it would be nice to extend the method to the case where the support is on more than one interval, for example, when considering the free convolution of a semicircle or Marchenko-Pastur distribution with a discrete measure. Finally, it would be interesting to see whether this approach can be extended to operator-valued free probability theory (see, e.g.,~\cite{Mai2018}), as this would allow for an extension of the algorithm for free additive convolution to an algorithm for the computation of rational functions in free random variables.

\paragraph{Acknowledgements.} The authors would like to thank Emmanuel Candès, Jorge Garza Vargas, and Iain Johnstone for inspiring discussions on this work. 

%The work of L.Y. is partially supported by the National Science Foundation under awards DMS-2011699 and DMS-2208163.

\appendix
	
\section{Proofs of the results in Section~\ref{sec:freeconvolution}}\label{sec:proofs}

	\begin{proof}[Proof of Theorem~\ref{thm:Ganalytic}]
		Given $z \in \mathbb{C} \backslash [a,b]$ we are going to prove that $G$ admits a series expansion in a neighborhood of $z$. Let us consider points $w \in \mathbb{C}$ such that
		\begin{equation}\label{eq:nbd}
			|z-w| < \frac{1}{2}\mathrm{dist}(z, [a, b]).
		\end{equation}
		Then, for all $z \in [a,b]$ we have $|w-z|/|x-z| < \frac{1}{2}$. Therefore, the series $\sum_{n \ge 0} \left ( \frac{w-z}{x-z}\right )^n$ converges uniformly to $\frac{x-z}{x-w}$ in the neighborhood~\eqref{eq:nbd}. By rearranging the terms, we get that
		\begin{equation*}
			(w-x)^{-1} = - \sum_{n \ge 0} (x-z)^{-n-1}(w-z)^n.
		\end{equation*}
		Therefore, by Fubini-Tonelli's theorem, we can exchange sum and integral in the definition of the Cauchy transform and we get
		\begin{equation*}
			G(z) = \sum_{n \ge 0} \left [ -\int_{a}^b (x-z)^{-(n+1)} \mathrm{d}\mu(x) \right ] (w-z)^n
		\end{equation*}
		for all $w$ in the neighborhood~\eqref{eq:nbd}. This proves that $G$ is analytic on $\mathbb{C} \backslash [a,b]$.
		
		To prove that $G$ is analytic at $\infty$, we need to show that $g(z) := G\left ( \frac{1}{z} \right )$ is analytic in zero. We have that
		\begin{equation*}
			g(z) = \int_a^b \frac{z}{1-zx} \mathrm{d}\mu(x).
		\end{equation*}
		The series $z \cdot \sum_{n \ge 0} (zx)^n$ converges uniformly to $\frac{z}{1-zx}$ for $|z| < \frac{1}{2} \min\left \{ \frac{1}{|a|}, \frac{1}{|b|} \right \}$ (with the convention that $\frac{1}{0} = +\infty$). Therefore, we can exchange sum and integral and we get
		\begin{equation*}
			g(z) = \sum_{n\ge 1} \left ( \int_a^b x^{n-1} \mathrm{d}\mu(x) \right ) z^n = z + \sum_{n\ge 2} \left ( \int_a^b x^{n-1} \mathrm{d}\mu(x) \right ) z^n
		\end{equation*}
		in the region where $|z| < \frac{1}{2} \min\left \{ \frac{1}{|a|}, \frac{1}{|b|} \right \}$, which proves that $G$ is also analytic in $z = \infty$.
	\end{proof}

The proof of Corollary~\ref{cor:limit} follows directly from the following theorem, which is a slight generalization of Theorem~\ref{thm:stiltjies}.
\begin{theorem}\label{thm:limit}
	Assume $\mathrm{d}\mu(x) = f(x)\mathrm{d}x$ for a continuous function $f$. For any $c < d \in (a, b)$ we have that
	\begin{equation*}
		-\frac{1}{\pi} \lim_{z \to 0 \atop z \in \mathbb{C}^+} \int_c^d \mathrm{Im}\left ( G(t + z) \right ) \mathrm{d}t = \mu([c, d]).
	\end{equation*}
\end{theorem}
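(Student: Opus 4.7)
My plan is to reduce the statement to a Poisson-kernel convergence argument, generalizing the standard proof of the Stieltjes inversion Theorem~\ref{thm:stiltjies} from the radial approach $z = iy \to 0^+$ to an arbitrary approach $z \in \mathbb{C}^+ \to 0$. Writing $z = x + iy$ with $y>0$, and substituting $\mathrm{d}\mu(s) = f(s)\mathrm{d}s$, a direct computation of the imaginary part of $\frac{1}{t+z-s}$ gives
\begin{equation*}
-\frac{1}{\pi}\mathrm{Im}(G(t+z)) \;=\; \int_a^b P_y(t+x-s)\,f(s)\,\mathrm{d}s, \qquad P_y(u) := \frac{1}{\pi}\,\frac{y}{u^2+y^2},
\end{equation*}
where $P_y$ is the standard Poisson kernel on the real line.

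First, I would apply Fubini--Tonelli to swap the integrations over $t \in [c,d]$ and $s \in [a,b]$; this is immediate since the integrand is continuous and bounded on the compact rectangle for each fixed $y>0$. The inner $t$-integral evaluates in closed form to an arctangent difference, so the left-hand side of the claim becomes $\int_a^b f(s)\, K(s;x,y)\,\mathrm{d}s$, with
\begin{equation*}
K(s;x,y) \;:=\; \frac{1}{\pi}\!\left[\arctan\!\left(\tfrac{d+x-s}{y}\right)-\arctan\!\left(\tfrac{c+x-s}{y}\right)\right].
\end{equation*}

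Next, I would analyze $K(s;x,y)$ as $z = x+iy \to 0$ with $y>0$. Three observations suffice: (i) $K(s;x,y) \in [0,1]$ for all arguments since $\arctan$ takes values in $(-\pi/2,\pi/2)$, giving a uniform majorant; (ii) for $s \in (c,d)$, as soon as $|x|$ is small enough that $c+x-s < 0 < d+x-s$, the two arctangents tend to $-\pi/2$ and $\pi/2$ respectively, so $K \to 1$; (iii) for $s \notin [c,d]$ both arctangents approach the same $\pm\pi/2$ limit, so $K \to 0$. Hence $K(s;x,y) \to \mathbf{1}_{(c,d)}(s)$ pointwise almost everywhere in $s$.

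Finally, since $f$ is continuous on $[a,b]$ it is bounded, and the product $|f(s)\,K(s;x,y)| \le \|f\|_{[a,b]}$ on the compact interval, so dominated convergence yields
\begin{equation*}
\lim_{\substack{z \to 0 \\ z \in \mathbb{C}^+}} \int_a^b f(s)\,K(s;x,y)\,\mathrm{d}s \;=\; \int_c^d f(s)\,\mathrm{d}s \;=\; \mu([c,d]),
\end{equation*}
which is exactly the claim. The only mildly subtle point, and the closest thing to an obstacle, is that $K$ fails to converge in a path-independent manner at the two endpoints $s \in \{c,d\}$: different angles of approach of $z$ to $0$ produce different values in $[0,1]$ there. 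However, $\{c,d\}$ has Lebesgue measure zero, so this does not affect the integral, and no step of the argument uses radial approach of $z$ in any essential way --- which is precisely the upgrade over Theorem~\ref{thm:stiltjies} that we need.
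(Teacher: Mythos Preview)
Your argument is correct and is essentially identical to the paper's own proof: both write $z=\varepsilon+iy$, compute the imaginary part of the Cauchy kernel, apply Fubini--Tonelli, evaluate the inner integral as an arctangent difference, and then invoke dominated convergence using the pointwise a.e.\ convergence of that difference to (a multiple of) $\mathbf{1}_{(c,d)}$. The only cosmetic differences are your use of Poisson-kernel language and the slightly tighter bound $K\in[0,1]$ in place of the paper's $|F|\le 2\pi$.
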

\begin{proof}
	We denote $z = \varepsilon + iy$, with $\varepsilon \in \mathbb{R}$ and $y \in \mathbb{R}^+$. We have that
	\begin{equation*}
		\mathrm{Im}(G(t+z)) = \int_a^b \mathrm{Im} \left ( \frac{1}{t+\varepsilon - x + iy} \right ) \mathrm{d}\mu(x) = \int_a^b \frac{-y}{(t+\varepsilon-x)^2+y^2} \mathrm{d}\mu(x).
	\end{equation*}
	Thanks to Fubini-Tonelli's theorem, we can write
	\begin{align*}
		\int_c^d \mathrm{Im}(G(t+z)) \mathrm{d}t & = \int_a^b \left ( \int_c^d - \frac{y}{(t+\varepsilon-x)^2 + y^2} \mathrm{d}t \right ) \mathrm{d}\mu(x) \\
		& = - \int_a^b \left ( \int_{(c+\varepsilon-x)/y}^{(d+\varepsilon-x)/y} \frac{1}{1+{\mathcal t}^2} \mathrm{d}\mathcal t\right ) \mathrm{d}\mu(x) \\
		& = -\int_a^b \left ( \arctan \left ( \frac{d+\varepsilon-x}{y}\right ) - \arctan \left ( \frac{c+\varepsilon-x}{y}\right ) \right ) \mathrm{d}\mu(x),
	\end{align*}
	where we made the change of variables $\mathcal t = \frac{t+\varepsilon-x}{y}$. Now consider the function $F((\varepsilon,y), x) := \arctan \left ( \frac{d+\varepsilon-x}{y}\right ) - \arctan \left ( \frac{c+\varepsilon-x}{y}\right )$. We have that $|F((\varepsilon,y), x)| \le 2\pi$ and that $F((\varepsilon,y), x) \to_{(\varepsilon,y)\to(0,0^+)} \mathcal F(x)$ where
	\begin{equation*}
		\mathcal F(x) = \begin{cases}
			0 & \text{if } x \notin [c,d];\\
			\pi & \text{if } x \in (c,d),
		\end{cases}
	\end{equation*}
	for all $x$ except for $x \in \{c, d\}$. The function $\mathcal F$ is measurable, and it agrees $\mu$-almost everywhere with the $\mu$-almost everywhere existing pointwise limit. Hence we can apply the dominated convergence theorem. This, together with the fact that $\mathrm{d}\mu(\{c,d\}) = 0$, implies
	\begin{equation*}
		\lim_{(\varepsilon, y) \to (0, 0^+)} \int_c^d \mathrm{Im}(G(t + \varepsilon + iy)) \mathrm{d}t = - \int_a^b \mathcal F(x) \mathrm{d}\mu(x) = -\pi \mu([c, d]).\qedhere
	\end{equation*}
\end{proof}

\begin{proof}[Proof of Theorem~\ref{thm:Ranalytic}]
Theorem 17 in Chapter 3 of~\cite{Mingo2017} states that if the support is contained in an interval $[-r, r]$, then $R(z)$ is analytic in a disk of radius $\frac{1}{6r}$. Now recall that the function $G(z)$ is analytic on $\mathbb{C} \backslash [a, b]$; by the inverse function theorem for holomorphic functions (Theorem~\ref{thm:inversefunction}), it is invertible in a neighborhood of a point $z \in \mathbb{C} \backslash[a,b]$ whenever $G'(z) \neq 0$. We have that
\begin{equation*}
	G'(z) = - \int_a^b \frac{1}{(x-z)^2} \mathrm{d}\mu(x), 
\end{equation*}
which is always nonzero for $z \in \mathbb{R} \backslash [a,b]$, from which we can conclude the second part of the theorem.
\end{proof}

\section{Proof of Theorem~\ref{thm:supporttimes}}

% Assumptions needed in the proof:
% \begin{itemize}
%     \item $t(z) := \frac{z}{1+z} T_{\mu_1}^{-1}(z) T_{\mu_2}^{-1}(z)$ needs to be well defined (which also means that $T_{\mu_1}^{-1}$ and $T_{\mu_2}^{-1}$ are single-valued).
%     \item $T_{\mu_1}(\mathbb{C}^{-}) \cap T_{\mu_2}(\mathbb{C}^{-})$ needs to be bounded (which happens for instance when one of the two measures has sqrt-decay at the boundary, or is Jacobi with $\alpha, \beta > 0$.
%     \item $\mu_1$ and $\mu_2$ have support on the positive real axis.
% \end{itemize}

\begin{theorem}[{Inverse function theorem for holomorphic functions~\cite[Theorem 7.5]{Fritzsche2002}}]\label{thm:inversefunction}
Let $U$ be an open set in $\mathbb{C}$, let $f: U \to \mathbb{C}$ be holomorphic, and let $p \in U$ be such that $f'(p) \neq 0$. Then there exists an open neighborhood $V$ of $p$ such that $f: V \to f(V)$ is a biholomorphism.
\end{theorem}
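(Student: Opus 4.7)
The plan is to reduce to the real (smooth) inverse function theorem for maps $\mathbb{R}^2 \to \mathbb{R}^2$, and then upgrade the resulting $C^1$ local inverse to a holomorphic one via a difference-quotient argument.

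First, I would identify $\mathbb{C}$ with $\mathbb{R}^2$ and write $f(x+iy) = u(x,y) + i\, v(x,y)$, viewing $f$ as a smooth map $F: U \to \mathbb{R}^2$. Because $f$ is holomorphic, the Cauchy--Riemann equations $u_x = v_y$ and $u_y = -v_x$ hold throughout $U$, and the Jacobian determinant of $F$ at $p$ is
\[
\det DF(p) = u_x v_y - u_y v_x = u_x^2 + v_x^2 = |f'(p)|^2,
\]
which is nonzero by hypothesis. The smooth inverse function theorem then yields open neighborhoods $V \ni p$ and $W := f(V) \ni f(p)$ such that $f\colon V \to W$ is a $C^1$ diffeomorphism; in particular $f|_V$ is injective and the set-theoretic inverse $g\colon W \to V$ is continuous. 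After possibly shrinking $V$, continuity of $f'$ lets me assume $f'(z) \neq 0$ for every $z \in V$.

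Next I would promote $g$ to a holomorphic map. Fix $w_0 \in W$, let $z_0 = g(w_0)$, and for $w \in W \setminus \{w_0\}$ set $z = g(w) \in V$. Continuity of $g$ and injectivity of $f|_V$ give $z \to z_0$ and $z \neq z_0$ as $w \to w_0$, so
\[
\frac{g(w) - g(w_0)}{w - w_0} = \frac{z - z_0}{f(z) - f(z_0)} \longrightarrow \frac{1}{f'(z_0)}.
\]
Hence $g$ is complex differentiable at every point of $W$ with $g'(w_0) = 1/f'(g(w_0))$, so $g$ is holomorphic, and $f\colon V \to f(V)$ is therefore a biholomorphism.

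The only substantive input is the real inverse function theorem, which supplies local injectivity and a continuous inverse; the upgrade from continuity to holomorphicity is essentially formal once $f'$ is known to be nonvanishing on $V$. The one bookkeeping point I would double-check is the shrinking of $V$ to ensure $f'$ stays nonzero on all of it, which is immediate from continuity of $f'$ but is needed to make the difference-quotient identity valid at every base point in $W$.
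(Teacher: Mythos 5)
The paper does not actually prove this statement: it cites it directly from a complex-analysis textbook (Fritzsche--Grauert, Theorem~7.5) and uses it as a black box in the proof of Theorem~\ref{thm:supporttimes}, so there is no in-paper argument to compare yours against. That said, your proof is correct and is one of the two standard derivations. The Cauchy--Riemann computation $\det DF(p) = u_x^2 + v_x^2 = |f'(p)|^2 \neq 0$ correctly reduces to the real $C^1$ inverse function theorem, and the difference-quotient upgrade of the continuous inverse $g$ to a holomorphic one is handled carefully: you use injectivity to ensure $z \neq z_0$, continuity of $g$ to get $z \to z_0$, and nonvanishing of $f'$ on $V$ to make the reciprocal limit exist. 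One small observation: the explicit shrinking of $V$ to guarantee $f' \neq 0$ throughout is actually automatic, since $f|_V$ being a $C^1$ diffeomorphism already forces $\det DF = |f'|^2$ to be nonzero at every point of $V$ by the chain rule applied to $g \circ f = \mathrm{id}$; including it as a safety check is harmless. The main alternative route you did not take, and which some texts (including, I believe, Fritzsche--Grauert) prefer because it avoids importing the real IFT, is purely complex-analytic: use Rouch\'e's theorem or the argument principle to show that for $w$ near $f(p)$ the equation $f(z) = w$ has exactly one solution in a small disk around $p$, conclude local bijectivity and openness, and then either run the same difference-quotient argument or write the inverse directly as the contour integral $g(w) = \frac{1}{2\pi i}\int_{|z-p|=r} \frac{z f'(z)}{f(z) - w}\,dz$, which is visibly holomorphic in $w$. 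Your approach is shorter if the real IFT is available; the complex-analytic one is self-contained within the holomorphic category and produces an explicit integral representation of the inverse.
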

Consequence: Under the assumptions of Theorem~\ref{thm:inversefunction}, if $f(p) \in \mathbb{R}$ then there exists precisely one curve in $V$ passing through $p$ on which $f$ is real-valued.

\begin{theorem}[Complex Morse lemma~\cite{Zoladek2006}]\label{thm:morse}
Let $U$ be an open set in $\mathbb{C}$, let $f: U \to \mathbb{C}$ be a holomorphic function, and let $p\in U$ be such that $f'(p) = 0$ and $f''(p) \neq 0$ (a non-degenerate critical point of $f$). Then there exist neighborhoods $\tilde U$ of $p$ and $V$ of $0$, and a bijective holomorphic function $\varphi : V \to \tilde U$ such that $\varphi(0) = p$ and $f(\varphi(w)) = f(p) + w^2$.
\end{theorem}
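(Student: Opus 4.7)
The plan is to reduce the statement to the holomorphic inverse function theorem already recalled as Theorem~\ref{thm:inversefunction}, by producing a holomorphic local change of coordinates $\psi$ that squares to $f-f(p)$. Without loss of generality, by translating, I will take $p=0$ and $f(p)=0$, so the goal becomes to write $f(z)=\psi(z)^{2}$ for some biholomorphism $\psi$ near $0$, and then set $\varphi=\psi^{-1}$.

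The first step is Taylor expansion. Since $f$ is holomorphic near $0$ and $f(0)=f'(0)=0$, we can write
\begin{equation*}
    f(z)=\sum_{k\geq 2} a_{k} z^{k} = z^{2}\, h(z), \qquad h(z):=\sum_{k\geq 0} a_{k+2} z^{k},
\end{equation*}
where $h$ is holomorphic in a neighborhood of $0$ and $h(0)=a_{2}=f''(0)/2\neq 0$. The key move is to extract a holomorphic square root of $h$. Since $h$ is continuous and $h(0)\neq 0$, there is an open disk $D$ around $0$ on which $h$ does not vanish. As $D$ is simply connected and $h$ is nonvanishing there, $h$ admits a holomorphic logarithm on $D$, and hence a holomorphic square root $g$ on $D$ satisfying $g(z)^{2}=h(z)$ and $g(0)$ equal to a fixed choice of $\sqrt{a_{2}}$.

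Next I define
\begin{equation*}
    \psi(z) := z\, g(z),
\end{equation*}
which is holomorphic on $D$ and satisfies $\psi(0)=0$ and $\psi'(0)=g(0)\neq 0$. By direct multiplication,
\begin{equation*}
    \psi(z)^{2} = z^{2}\, g(z)^{2} = z^{2}\, h(z) = f(z).
\end{equation*}
Because $\psi'(0)\neq 0$, Theorem~\ref{thm:inversefunction} applies: there exist an open neighborhood $\tilde U \subset D$ of $0$ and an open neighborhood $V$ of $0$ in $\mathbb{C}$ such that $\psi:\tilde U\to V$ is a biholomorphism. Setting $\varphi := \psi^{-1}: V\to \tilde U$ gives $\varphi(0)=0=p$ and
\begin{equation*}
    f(\varphi(w)) = \psi(\varphi(w))^{2} = w^{2},
\end{equation*}
which, after undoing the translations, is exactly the claimed identity $f(\varphi(w))=f(p)+w^{2}$.

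The only subtle point to check carefully is the existence of the holomorphic square root $g$; everything else is bookkeeping plus one invocation of the inverse function theorem. The square root is available because a nonvanishing holomorphic function on a simply connected domain has a holomorphic logarithm (a standard consequence of the fact that $h'/h$ has a primitive on a simply connected set), and composing with $\tfrac12\exp$ gives the square root with a prescribed value at $0$. This choice of sign is immaterial for the conclusion, since replacing $g$ by $-g$ only replaces $\psi$ by $-\psi$, and both yield valid Morse charts.
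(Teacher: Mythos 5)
Your proof is correct and complete. The paper itself does not prove this lemma — it states it as a cited result from the reference on Morse theory — so there is no in-paper proof to compare against; your argument (Taylor factorization $f(z)=z^{2}h(z)$ with $h(0)\neq 0$, existence of a holomorphic square root $g$ of $h$ on a simply connected neighborhood, forming $\psi(z)=z\,g(z)$ so that $\psi^{2}=f$ and $\psi'(0)\neq 0$, then inverting $\psi$ via the holomorphic inverse function theorem) is exactly the standard proof of the holomorphic Morse lemma, and all the details check out: $\psi'(0)=g(0)=\sqrt{f''(0)/2}\neq 0$, the disk is simply connected so $h'/h$ has a primitive and hence $h$ has a holomorphic logarithm, and the final identity $f(\varphi(w))=w^{2}$ translates back to $f(\varphi(w))=f(p)+w^{2}$ after undoing the shifts. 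One small phrasing slip: you write ``composing with $\tfrac12\exp$,'' but what you mean is taking $g=\exp\bigl(\tfrac12 L\bigr)$ where $L$ is a holomorphic logarithm of $h$; the intent is clear and the content is right.
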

Consequence: Under the assumptions of Theorem~\ref{thm:morse}, if additionally $f(p) \in \mathbb{R}$, there exist \emph{exactly two} curves passing through $p$ on which $f$ is real-valued. These curves are the images via $\varphi$ of the intersection of $V$ with the x-axis and y-axis.

\begin{lemma}\label{lem:expansionT}
Let $\mu$ be a measure with sqrt-behavior at the boundary. For $z$ in a neighborhood of $0$ we have
\begin{equation*}
    T_{\mu}^{-1}(z) = \frac{1}{z} \mathbb{E}[X_{\mu}] + \text{analytic function}.
\end{equation*}
\end{lemma}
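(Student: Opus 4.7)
The plan is to reduce the statement to a question about holomorphic functions at the origin via the change of variable $u = 1/z$, and then apply the holomorphic inverse function theorem.

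First I would establish the Laurent expansion of $T$ at infinity. Since $\mu$ has compact support in $[a,b]$, for $|z| > \max(|a|,|b|)$ the geometric series gives
\begin{equation*}
\frac{t}{z-t} = \sum_{n \ge 1} \frac{t^n}{z^n}
\end{equation*}
with uniform convergence in $t \in [a,b]$, so integrating term by term yields
\begin{equation*}
T(z) = \sum_{n \ge 1} \frac{m_n}{z^n}, \qquad m_n := \int_a^b t^n \, \mathrm{d}\mu(t).
\end{equation*}
In particular, the function $F(u) := T(1/u)$ extends to a holomorphic function in a neighborhood of $u = 0$, with $F(0) = 0$ and $F'(0) = m_1 = \mathbb{E}[X_\mu]$.

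Next, I would note that in the multiplicative setting the support of $\mu$ satisfies $a,b > 0$, and the sqrt-behavior hypothesis gives a strictly positive density on the interior of $[a,b] \subset (0,\infty)$. Hence $m_1 = \mathbb{E}[X_\mu] > 0$, and in particular $F'(0) \neq 0$. The holomorphic inverse function theorem (Theorem~\ref{thm:inversefunction}) then yields a holomorphic inverse $F^{-1}$ on a neighborhood of $0$ with $F^{-1}(0) = 0$. Writing $F^{-1}(w) = w\, h(w)$ for some $h$ holomorphic near $0$ with $h(0) = 1/m_1 \neq 0$, and using that $T^{-1}(w) = 1/F^{-1}(w)$ in a neighborhood of $w = 0$, I obtain
\begin{equation*}
T^{-1}(w) = \frac{1}{w\, h(w)}.
\end{equation*}

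Finally, since $h(0) \neq 0$, the reciprocal $1/h$ is holomorphic near $0$, so I can expand $1/h(w) = m_1 + w\, k(w)$ for some $k$ holomorphic in a neighborhood of $0$. Substituting gives
\begin{equation*}
T^{-1}(w) = \frac{m_1}{w} + k(w),
\end{equation*}
which is the asserted decomposition. No serious obstacle arises in the argument; the only point that requires the hypotheses in a nontrivial way is justifying $m_1 \neq 0$, for which positivity of the support (together with non-degeneracy of the density implied by sqrt-behavior) is exactly what is needed.
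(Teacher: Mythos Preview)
Your proposal is correct and takes essentially the same approach as the paper: both introduce $F(u)=T(1/u)$, expand via the geometric series to see $F(0)=0$ and $F'(0)=m_1$, invoke the holomorphic inverse function theorem using $m_1\neq 0$, and then unwind $T^{-1}(w)=1/F^{-1}(w)$ into a simple pole plus a holomorphic remainder. Your algebra for the last step (factoring $F^{-1}(w)=w\,h(w)$ and expanding $1/h$) is arguably cleaner than the paper's, and you are more explicit about why $m_1>0$ (positivity of the support in the multiplicative setting), which the paper uses without comment.
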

\begin{proof}
    We have that
    \begin{align*}
        T_{\mu}\left ( \frac{1}{z} \right ) & = \int_a^b \frac{x}{1/z-x} \mathrm{d}\mu(x) = \int_a^b \frac{zx}{1-zx}\mathrm{d}\mu(x)\\& = \int_a^b \left ( \sum_{n=1}^{+\infty} (zx)^n \mathrm{d}\mu(x)\right ) = \sum_{n=1}^{+\infty} z^n \int_a^b x^n \mathrm{d}\mu(x) = z \mathbb{E}[X_\mu] + z^2 f(z),
    \end{align*}
    for some holomorphic function $f(z)$. The fourth inequality follows Fubini-Tonelli's theorem because the series converges absolutely in a neighborhood of zero. Let us apply the inverse function theorem (Theorem~\ref{thm:inversefunction}) to $\varphi(z) := T_{\mu}\left ( \frac{1}{z} \right )$ in a neighborhood of $0$, where $\varphi(0) = 0$ and $\varphi'(0) = \mathbb{E}[X_{\mu}] > 0$. We get that $\varphi^{-1}(z) = \frac{z}{\mathbb{E}[X_{\mu}]} + \tilde \varphi(z)$
    for a holomorphic $\varphi(z)$, therefore
    \begin{equation*}
        T_{\mu}^{-1}(z) = \frac{\mathbb{E}[X_{\mu}]}{z} \left ( \frac{1}{1 + z \tilde \varphi(z) \mathbb{E}[X_{\mu}]} \right ) = \frac{\mathbb{E}[X_{\mu}]}{z} + \frac{\tilde \varphi(z)}{1 + z\varphi(z) \mathbb{E}[X_{\mu}]},
    \end{equation*}
    where the function $\frac{\tilde \varphi(z)}{1 + z\varphi(z) \mathbb{E}[X_{\mu}]}$ is holomorphic in a neighborhood of zero.
\end{proof}

\begin{proposition}\label{prop:border}
Assume that $\mu$ satisfies the Assumptions~\ref{ass:prod}. Then $T_{\mu}([a,b])$ is a bounded continuous curve. 
%(possibly unbounded) continuous curve. The curve is bounded if $\mu$ has sqrt-behavior at the boundary or if it is Jacobi with $\alpha, \beta > 0$.
\end{proposition}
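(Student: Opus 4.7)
The plan is to pull $T_\mu$ back via the Joukowski map and study $\mathcal T_\mu(v) := T_\mu(J_{[a,b]}(v))$, which is holomorphic on the open unit disk $\mathbb D$ with $\mathcal T_\mu(0) = 0$ and therefore admits a power series $\mathcal T_\mu(v) = \sum_{n\ge 1} t_n v^n$ convergent on $\mathbb D$. Since $J_{[a,b]}$ maps $\partial\mathbb D$ surjectively (two-to-one) onto $[a,b]$, the natural interpretation of $T_\mu([a,b])$ as the boundary-value curve of $T_\mu$ on its singular set coincides with $\mathcal T_\mu(\partial\mathbb D)$. It therefore suffices to show that $\mathcal T_\mu$ extends continuously from $\mathbb D$ to $\overline{\mathbb D}$: then $\mathcal T_\mu(\partial\mathbb D)$ is the continuous image of a compact set, hence automatically bounded and a continuous curve.

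The central step is to establish absolute summability $\sum_{n\ge 1} |t_n| < \infty$; the Weierstrass M-test then yields uniform convergence of the series on $\overline{\mathbb D}$, and hence the desired continuous extension of $\mathcal T_\mu$. I would mirror the argument outlined in Remark~\ref{rmk:convergenceGtilde}: substituting $t = J_{[a,b]}(e^{i\theta})$ into
\begin{equation*}
    T_\mu(z) = \int_a^b \frac{t\,\psi(t)\sqrt{(t-a)(b-t)}}{z-t}\,dt
\end{equation*}
and expanding $(J_{[a,b]}(v) - J_{[a,b]}(e^{i\theta}))^{-1}$ using the generating function of the Chebyshev polynomials of the second kind $U_n$, one identifies each $t_n$ -- up to a constant depending only on $b-a$ -- with the $(n-1)$-st Chebyshev-$U$ expansion coefficient of the function $x \mapsto x\psi(x)$ on $[a,b]$. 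By Definition~\ref{def:sqrtbehavior}, $\psi$ lies in $C^1([a,b])$ with $\psi'$ of bounded variation, so the same regularity holds for $x\psi(x)$ (since $[a,b] \subset (0,\infty)$ by Assumptions~\ref{ass:prod}); a classical Chebyshev estimate (e.g.\ \cite[Theorem 3.1]{Trefethen2019}, as cited in Remark~\ref{rmk:convergenceGtilde}) then delivers the required absolute summability.

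With $\sum |t_n| < \infty$ in hand, the continuous extension of $\mathcal T_\mu$ to $\overline{\mathbb D}$ is immediate, and $\theta \mapsto \mathcal T_\mu(e^{i\theta})$ becomes a continuous function from $[0,2\pi]$ into $\mathbb C$. Its image -- equal to $T_\mu([a,b])$ under the identification above -- is then compact, hence bounded, and is a continuous curve, finishing the proof. The main obstacle I anticipate is making the Chebyshev-coefficient identification fully rigorous: the analogous fact for $\mathcal G_\mu$ is credited to \cite{Olver2012} in Remark~\ref{rmk:convergenceGtilde} without a detailed derivation, and transcribing it with $x\psi(x)$ in place of $\psi$ requires careful bookkeeping of the substitution and of the resulting principal-value integrals on $\partial\mathbb D$. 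As a fallback, I would invoke a Plemelj--Sokhotski approach: on $[a,b]$ the boundary value $T_\mu(x+i0^+)$ splits into a principal-value Hilbert-transform integral of $t\,\psi(t)\sqrt{(t-a)(b-t)}$ plus the explicit jump $-i\pi\, x\,\psi(x)\sqrt{(x-a)(b-x)}$, both of which are continuous on $[a,b]$ under sqrt-behavior by classical Muskhelishvili-type estimates, again yielding a bounded continuous parametrization.
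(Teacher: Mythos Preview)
Your proposal is correct and follows essentially the same route as the paper: both reduce the $T_\mu$ statement to the analogous Cauchy-transform result by observing that $T_\mu$ is (up to normalization) the Cauchy transform of the density $x\psi(x)\sqrt{(x-a)(b-x)}$, which inherits the sqrt-behavior regularity since $[a,b]\subset(0,\infty)$. The paper simply cites \cite[Proposition~A.8]{Olver2012} for that underlying fact, whereas you spell out the mechanism via Chebyshev-$U$ coefficients and absolute summability on $\overline{\mathbb D}$---which is precisely the content of Remark~\ref{rmk:convergenceGtilde}, itself attributed to the same source.
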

\begin{proof}
    There is a similar result for the additive case~\cite[Proposition A.8]{Olver2012}; Proposition~\ref{prop:border} is obtained by substituting $\psi(x)$ with $x\psi(x)$ in all the integrals in the proof of~\cite[Proposition A.8]{Olver2012}.
\end{proof}

\begin{proposition}\label{prop:expansion}
If $\mu$ is a measure with sqrt-behavior at the boundary and the support $[a,b]$, then $T_{\mu}^{-1}$ is analytic in a neighborhood of $T_{\mu}(a)$ and $T_\mu(b)$, with zero derivative and nonzero second derivative. 
% Jacobi measure and $\beta > 0$ then
% \begin{equation*}
%     T_{\mu}(z) = T_{\mu}(b) + C (z-b)^\beta + o(z-b)^\beta.
% \end{equation*}
% If $\mu$ is precisely a Jacobi measure (i.e. $\psi(a), \psi(b) \neq 0$ and $\beta > 0$ then $C \neq 0$ and
% \begin{equation*}
%     T_\mu^{-1}(w) = b + C(T_\mu(b) - w)^{1/\beta} + o(T_\mu(b) - w)^{1/\beta} .
% \end{equation*}
% If $\mu$ is precisely sqrt-behavior then $T_\mu^{-1}$ is analytic in a neighborhood of $T_\mu(b)$, with zero derivative and nonzero second derivative. Similar things near $T_\mu(a)$.
\end{proposition}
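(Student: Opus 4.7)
My plan is to unfold the square-root branching of $T_\mu$ at the endpoints $a$ and $b$ by composing with the Joukowski map $J = J_{[a,b]}$, and then to invoke the inverse function theorem (Theorem~\ref{thm:inversefunction}). I describe the argument near $z = b$; the argument near $z = a$ is symmetric, replacing $v = 1$ by $v = -1$ throughout.

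First I would establish the local expansion
\begin{equation*}
    T_\mu(z) - T_\mu(b) = C\sqrt{z-b} + h(z),
\end{equation*}
where $h$ is holomorphic in a neighborhood of $b$ and $C \neq 0$. The identity
\begin{equation*}
    T_\mu(z) - T_\mu(b) = (b-z)\int_a^b \frac{x\,\psi(x)\sqrt{x-a}}{(z-x)\sqrt{b-x}}\,dx
\end{equation*}
makes sense under sqrt-behavior because $d\mu(x)/(b-x) = \psi(x)\sqrt{x-a}/\sqrt{b-x}\,dx$ is still integrable near $b$. Splitting the integral into a small window near $x = b$ (where the substitution $u = b-x$ followed by $u = (z-b)t^2$ isolates the singular $1/\sqrt{z-b}$ contribution) and a remainder that depends analytically on $z$ yields the claimed expansion, with $C$ proportional to $b\,\psi(b)\sqrt{b-a}$. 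Under Assumptions~\ref{ass:prod} we have $b > 0$, and the standing interpretation that $\psi(b) > 0$ at an endpoint with genuine sqrt-behavior gives $C \neq 0$.

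Next I parametrize a neighborhood of $b$ by the Joukowski variable $v$ near $1$. A direct computation gives $J(v) - b = \tfrac{b-a}{4v}(v-1)^2$, so $\sqrt{J(v)-b}$ extends to a single-valued holomorphic function of $v$ that vanishes to first order at $v=1$. Substituting into the expansion from the previous step, the function $\mathcal T_\mu(v) := T_\mu(J(v))$ is holomorphic in a full open neighborhood of $v = 1$, with $\mathcal T_\mu(1) = T_\mu(b)$ and $\mathcal T_\mu'(1) = \tfrac{1}{2}C\sqrt{b-a} \neq 0$. Theorem~\ref{thm:inversefunction} then provides an analytic local inverse $\mathcal T_\mu^{-1}$ near $T_\mu(b)$ with $\mathcal T_\mu^{-1}(T_\mu(b)) = 1$ and $(\mathcal T_\mu^{-1})'(T_\mu(b)) \neq 0$.

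Finally, since $T_\mu^{-1}(w) = J(\mathcal T_\mu^{-1}(w))$ in a neighborhood of $T_\mu(b)$, the chain rule together with $J(1) = b$, $J'(1) = 0$ and $J''(1) = (b-a)/2 \neq 0$ immediately gives that $T_\mu^{-1}$ extends analytically to a neighborhood of $T_\mu(b)$ with
\begin{equation*}
    (T_\mu^{-1})'(T_\mu(b)) = J'(1)\cdot(\mathcal T_\mu^{-1})'(T_\mu(b)) = 0
\end{equation*}
and
\begin{equation*}
    (T_\mu^{-1})''(T_\mu(b)) = J''(1)\bigl[(\mathcal T_\mu^{-1})'(T_\mu(b))\bigr]^2 \neq 0.
\end{equation*}
The main obstacle I anticipate is the first step: rigorously separating $T_\mu(z)$ into the $\sqrt{z-b}$ piece plus an analytic remainder and verifying the nonvanishing of the leading coefficient $C$. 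Once that singular-integral computation is in place, the Joukowski substitution and the inverse function theorem furnish the rest of the argument almost mechanically, and the symmetric treatment at $z = a$ uses the same ingredients with $v = -1$ in place of $v = 1$.
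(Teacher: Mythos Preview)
Your argument is correct, but the paper's proof takes a different and much shorter route. The paper observes that
\[
T_\mu(z) = \int_a^b \frac{x}{z-x}\,\mathrm{d}\mu(x) = c\int_a^b \frac{1}{z-x}\,\mathrm{d}\nu(x) = c\,G_\nu(z),
\]
where $c = \int_a^b x\,\mathrm{d}\mu(x) > 0$ and $\mathrm{d}\nu(x) = \tfrac{x}{c}\,\mathrm{d}\mu(x)$ is again a measure with sqrt-behavior at the boundary (since $x/c$ is positive and analytic on $[a,b]$ under Assumptions~\ref{ass:prod}). Because $T_\mu$ is a scalar multiple of a Cauchy transform of a sqrt-behavior measure, the stated analytic structure of $T_\mu^{-1}$ near $T_\mu(a)$ and $T_\mu(b)$ follows immediately from the corresponding known fact for $G_\nu^{-1}$, namely \cite[Proposition~A.6]{Olver2012}.

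Your approach instead reproves that underlying fact from scratch: you unfold the square-root singularity via the Joukowski substitution, apply the inverse function theorem to $\mathcal T_\mu$ at $v=\pm 1$, and read off the derivatives of $T_\mu^{-1} = J\circ \mathcal T_\mu^{-1}$ using $J'(\pm 1)=0$, $J''(\pm 1)\neq 0$. This is more self-contained and makes the mechanism transparent, at the cost of the singular-integral computation in your first step (which you correctly flag as the delicate part, in particular the nonvanishing of $C$, which relies on $\psi$ not vanishing at the endpoint). The paper's reduction avoids all of that by outsourcing it to the already-established Cauchy-transform result.
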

\begin{proof}
    Note that $T_{\mu}(z) = c G_{\nu}(z)$ for a suitable constant $c := \int_a^b x \mathrm{d}\mu(x) > 0$ and a suitable measure $\nu$ defined by $\mathrm{d}\nu(x) = \frac{1}{c} x \mathrm{d}\mu(x)$. If $\mu$ is a measure with sqrt-behavior at the boundary, so is $\nu$. Therefore, we can apply~\cite[Proposition A.6]{Olver2012} to $\nu$ and get the stated result.  
\end{proof}

We are now ready to prove Theorem~\ref{thm:supporttimes} by following similar steps to the proof of the additive case~\cite[Theorem 2.2]{Olver2012}.

\begin{proof}[Proof of Theorem~\ref{thm:supporttimes}]
The set $T_{\mu_1}(\mathbb{C}^{-}) \cap T_{\mu_2}(\mathbb{C}^{-})$ is bounded and has a continuous border by Proposition~\ref{prop:border}. Let $\nu$ be on the border, and assume without loss of generality that $\nu \in \partial T_{\mu_1}(\mathbb{C}^{-})$. This implies that $T_{\mu_1}^{-1}(\nu) \in \mathbb{R}$.

\paragraph{Claim \#1:} $\mathcal{I}t(\nu) > 0$.

\begin{quote}
    \textbf{Proof of claim \#1:} Since $T_{\mu_1}^{-1}(\nu) \in \mathbb{R}$ we need to show that $ \mathcal{I} \left ( \frac{\nu T_{\mu_2}^{-1}(\nu)}{1+\nu} \right ) > 0$. The function $f(y) := \frac{1}{y} + \frac{1}{y T_{\mu_2}(y)}$ is holomorphic for $y \in \mathbb{C}^{-}$ (this is true since $T_{\mu_2}$ is holomorphic and nonzero here), therefore the function $\tilde f(y) := \mathcal{I}(f(y))$ is harmonic. If $y \notin [a,b]$ we have $\tilde f(y) = 0$; if $y \in \mathbb{R} \backslash [a,b]$ we have, by the Stiltjes inversion theorem (Theorem~\ref{thm:stiltjies}),
    \begin{equation*}
        \lim_{\varepsilon \to 0^-} \mathcal{I}(T_{\mu_2}(y+\varepsilon I)) = \pi y \mathrm{d}\mu_2(y) > 0,
    \end{equation*}
    therefore $\tilde f(y) \le 0$. By the modulus maximization property of harmonic functions, we have that $\tilde f(y) < 0$ for all $y \in \mathbb{C}^{-}$. Substituting $y = T_{\mu_2}^{-1}(\nu)$ we get $\mathcal{I} \left ( \frac{1 + \nu}{\nu T_{\mu_2}^{-1}(\nu)} \right ) < 0$ and therefore $ \mathcal{I} \left ( \frac{\nu T_{\mu_2}^{-1}(\nu)}{1+\nu} \right ) > 0$, which proves the claim.
\end{quote}

By Lemma~\ref{lem:expansionT}, we have that, in a neighborhood of zero, 
\begin{equation}\label{eq:expansiont}
t(z) = \frac{z}{1+z} \frac{1}{z^2} \mathbb{E}[X_{\mu_1}]\mathbb{E}[X_{\mu_2}] + f(z) = \frac{C}{z} + f(z)
\end{equation}
for a positive real constant $C$ and an analytic function $f(z)$, therefore $\mathcal{I}(t(z)) < 0$ in a neighborhood of zero. It follows that there exists a point $\zeta \in T_{\mu_1}^{-1}(\mathbb{C}^{-}) \cap T_{\mu_2}^{-1}(\mathbb{C}^{-})$ in which $t(\zeta) \in \mathbb{R}$. By the consequence of Theorem~\ref{thm:inversefunction} there exists a curve $\Gamma$ in $\mathbb{C}^{-}$ on which $t$ is real-valued. This curve cannot intersect itself (otherwise, we would have a loop and, by analyticity, the function $t$ would be a constant), is bounded, and cannot touch the part of the boundary in the complex plane. Therefore, it will connect with the real axis; there needs to be exactly one point to the left of $0$ and one point to the right of $0$, which we will call $\xi_a$ and $\xi_b$, respectively. Indeed, if there were two points on the same side of zero, we would get a closed loop on which $t$ is real-valued in a region where $t$ is analytic, leading to $t$ being a constant. For the same reason, $\Gamma$ is the only curve inside $T_{\mu_1}^{-1}(\mathbb{C}^{-}) \cap T_{\mu_2}^{-1}(\mathbb{C}^{-})$ where $t$ is real-valued. We have proved that $\Gamma$ divides $T_{\mu_1}^{-1}(\mathbb{C}^{-}) \cap T_{\mu_2}^{-1}(\mathbb{C}^{-})$ into an interior region $D$ where $\mathcal{I}(t(z)) > 0$ and an exterior region such that $\mathcal{I}(t(z)) < 0$.

\paragraph{Claim \#2:} Let $\xi_b^{1} := T_{\mu_1}(b_1)$ and $\xi_b^{2} := T_{\mu_2}(b_2)$. Then $\xi_b < \min\{\xi_b^1, \xi_b^2\}$.

\paragraph{Claim \#3:} $t'(z) > 0$ in a left neighborhood of $\min\{\xi_b^1, \xi_b^2\}$ on the real axis (which includes the point  $\min\{\xi_b^1, \xi_b^2\}$).

\begin{quote}
    \textbf{Proof of claim \#3:} For $z \in (0,\min\{\xi_b^1, \xi_b^2\})$ we have that
    \begin{equation*}
        t'(z) = \frac{z}{1+z}(T_{\mu_1}^{-1})'(z) T_{\mu_2}^{-1}(z) + \frac{z}{1+z} T_{\mu_1}^{-1}(z) (T_{\mu_2}^{-1})'(z) + \frac{1}{(1+z)^2} T_{\mu_1}^{-1}(z) T_{\mu_2}^{-1}(z).
    \end{equation*}
    Without loss of generality, we can assume that $\xi_b^1 < \xi_b^2$. In this case $(T_{\mu_1}^{-1})'(\xi_b^1) = 0$  (thanks to Proposition~\ref{prop:expansion}), therefore we need to show that
        \begin{equation*}
        t'(\xi_b^1) = \frac{ T_{\mu_1}^{-1}(\xi_b^1)}{1 + \xi_b^1} \left ( \frac{\xi_b^1}{T_{\mu_2}'(T_{\mu_2}^{-1}(\xi_b^1))} + \frac{T_{\mu_2}^{-1}(\xi_b^1)}{1+\xi_b^1} \right ) > 0.
    \end{equation*}
    The first factor, $\frac{ T_{\mu_1}^{-1}(\xi_b^1)}{1 + \xi_b^1}$, is positive. Let us look at the second factor. Letting $w = T_{\mu_2}^{-1}(\xi_b^1)$, this is equivalent to show that
    \begin{equation}\label{eq:musthold}
        \frac{T_{\mu_2}(w)}{T_{\mu_2}'(w)} + \frac{w}{1 + T_{\mu_2}(w)} = \frac{T_{\mu_2}(w) + T_{\mu_2}^2(w) + wT_{\mu_2}'(w)}{(1 + T_{\mu_2}(w)) T_{\mu_2}'(w)} > 0.
    \end{equation}
    \begin{itemize}
        \item We have $1 + T_{\mu_2}(w) = 1 + \xi_b^1 > 0$.
        \item We have $T_{\mu_2}'(w) = - \int_{a_2}^{b_2} \frac{x}{(w-x)^2} \mathrm{d}x < 0$
        \item We have
        \begin{align*}
            T_{\mu_2}(w) &+ T_{\mu_2}^2(w) + wT_{\mu_2}'(w) \\
            & = \int_{a_2}^{b_2} \frac{x}{w-x} \mathrm{d}x + \left ( \int_{a_2}^{b_2} \frac{x}{w-x} \mathrm{d}x \right )^2 - \int_{a_2}^{b_2} \frac{xw}{(w-x)^2} \mathrm{d}x\\
            & < \int_{a_2}^{b_2} \frac{x}{w-x} \mathrm{d}x + \int_{a_2}^{b_2} \frac{x^2}{(w-x)^2} \mathrm{d}x - \int_{a_2}^{b_2} \frac{xw}{(w-x)^2} \mathrm{d}x = 0,
        \end{align*}
        where we used Jensen's inequality for the convex function $x \mapsto \frac{x}{w-x}$.
    \end{itemize}
    Hence,~\eqref{eq:musthold} holds, proving the claim.

\end{quote}

\begin{quote}
    \textbf{Proof of claim \#2:} The function $t$ is analytic in a neighborhood of $\xi_b$. We have that $t'(z)$ goes to $-\infty$ in a (right) neighborhood of zero (because of~\eqref{eq:expansiont}), and $t'(z)$ is positive in a (left) neighborhood of $\min\{\xi_b^1, \xi_b^2\}$ because of Claim \#3. Therefore, there exists at least one point in the interval $(0, \xi_b)$ where $t'$ is zero. In each of these points, $p$ the consequence of Morse Lemma (Theorem~\ref{thm:morse}) tells us that there are two curves emanating from $p$ on which $t$ is real-valued; however, the only real-valued curves are the x-axis and $\Gamma$. Therefore there can be only one such point, and it has to be $\xi_b$. 
\end{quote}

With a similar argument, defining $\xi_a^{1} := T_{\mu_1}(a_1)$ and $\xi_a^{2} := T_{\mu_2}(a_2)$, one can show that $\xi_a > \max\{\xi_a^1, \xi_a^2\}$. 

Now, $t(z) = T_{\mu}^{-1}(z)$ in a neighborhood of $0$, and $t(z)$ is a single-valued function in $D$, therefore $t(z) = T_{\mu}^{-1}(z)$ inside all of $D$ (this step is because single-valuedness implies that two functions which coincide in an open set coincide everywhere). Since the only real-valued curve contained in $T_\mu(\mathbb{C}^{-})$ is the boundary, $\Gamma$ is the boundary of $T_\mu(\mathbb{C}^{-})$.

We want to use the inverse function theorem on $t$ inside $\bar D$ (the closure), except for the points $\xi_a$ and $\xi_b$. We check that the derivative of $t(z)$ is never zero on this set: we have $t'(z) = \frac{1}{T_{\mu}'(T_{\mu}^{-1}(z))}$ when $z \in D$ (open) and $\mathcal{I}(z) \neq 0$; moreover if $t'(z)$ had other zeros in the segment $(\xi_a, \xi_b)$ or on $\Gamma$ we would have another real-valued curve and this is impossible. The inverse function theorem (Theorem~\ref{thm:inversefunction}) implies that $t^{-1}$ exists and is analytic everywhere in $t(D)$, on $(a,b)$ where $a := t(\xi_a)$ and $ b := t(\xi_b)$, and on $\Gamma$ (except for, in general, the points $a$ and $b$). Note that we can conclude that $t^{-1}$ is also analytic on $(a,b)$ because $t$ is defined, analytic, and invertible also on the segments $(\xi_a, 0)$ and $(0, \xi_b)$ and in a domain $D'$ which is symmetric of $D$ with respect to the real axis.

Let us look at the density function $f(x)$ of $\mu$. We have that 
\begin{equation*}
x f(x) = \frac{1}{\pi} \lim_{\varepsilon \to 0^+}\mathcal{I} (t^{-1}(x - i\varepsilon)) \text{ for } x \in (a,b),
\end{equation*}
therefore $f(x)$ is analytic in $(a,b)$. Let us look at point $a$ (the argument for $b$ is entirely analogous). There are two real-valued curves passing from the point $\xi_a$, therefore the function $t$ has the form $$t(z) = a + c_2(z-\xi_a)^2 + (z-\xi_a)^3h(z)$$ in a neighborhood of $\xi_a$, for some analytic function $h(z)$, and similarly for $\xi_b$. %\textcolor{red}{[Actually this is the converse of the Morse Lemma, does it hold as well? If it had a $c_1 \neq 0$, it would be a local diffeomorphism with a neighborhood of $0$ in $\mathbb{C}$, and it would only have one real curve; if $c_2 =0$ could we way that there would be THREE real curves at least? If $c_1 = \ldots = c_{k-1} = 0, c_k \neq 0$ we would have $t(z) = a + c_k(z-\xi_a)^k s(z)$ for some holomorphic $s(z)$ in a neighborhood of $\xi_a$. ]}
By Theorem~\ref{thm:morse}, there exists a local biholomorphism $\varphi$ such that $\varphi(0) = \xi_a$ and\footnote{The minus sign arises because we want to choose $\varphi$ in such a way that a right neighborhood of $\xi_a$ on the real axis is sent into a left neighborhood of $a$ on the real axis, and a piece of $\Gamma$ close to $\xi_a$ is sent to a right neighborhood of $a$ on the real axis.} $t(\varphi(z)) = a - z^2$.
%\textcolor{red}{(if it is $a - z^2$ then it works. How do I "choose" the sign?)}.
Therefore, by setting $w = a- z^2 $ we get the expression
\begin{equation*}
    t^{-1}(w) = T_{\mu}(w) = \varphi\left (\sqrt{a-w}\right ) = c_0 + c_{1/2}\sqrt{a-w} + c_1(a-w) + c_{3/2}(a-w)^{3/2} + \ldots,
\end{equation*}
where all the coefficients are real because $T_{\mu}(w)$ is real for $w \in \mathbb{R} \backslash[a,b]$. Therefore, for $x \in (a,b)$ sufficiently close to $a$ we have
\begin{align*}
    x f(x) & = \frac{1}{\pi} \lim_{\varepsilon \to 0^+} \mathcal{I}(T_\mu(x - i\varepsilon)) \\
    & = \frac{1}{\pi} \lim_{\varepsilon \to 0^+} \mathcal{I}( \sqrt{a - x+i\varepsilon }(c_{1/2} + c_{3/2}(a-w) + c_{5/2}(a-w)^2 + \ldots))\\
    & = \frac{1}{\pi} \lim_{\varepsilon \to 0^+} \sqrt{x-a} \cdot h(x),
\end{align*}
for a function $h(x)$ that is analytic in a right neighborhood of $a$. The same argument holds in a neighborhood of $b$.
Therefore, $\mu$ is a measure with sqrt-behavior at the boundary, that is, with the form $\mathrm{d}\mu(x) = \sqrt{x-a} \sqrt{b-x} \psi(x)$ for an analytic function $\psi(x)$, with an invertible T-transform. This proves Theorem~\ref{thm:supporttimes} and allows us to reconstruct the measure using the truncated series expansion.
\end{proof}

% \textcolor{blue}{Note: It would be nice to assume that $\mu_1$ has sqrt-behavior and that $\mu_2$ is a Jacobi measure. For now, we are not sure about the proof of Theorem~\ref{thm:supporttimes} in this case, though.}

% A consequence of the proof of Theorem~\ref{thm:supporttimes} is that if the assumptions are not satisfied, but $t(z)$ is well defined, and its derivative has two zeros (one on the left and one on the right of zero) then these zeros defines the support, and the resulting measure has sqrt-behavior at the boundary.
	
\bibliographystyle{abbrv}
\bibliography{bib}

\end{document}